\newcommand{\R}{\mathbb{R}}
\newcommand{\N}{{\mathbb{N}}}
\renewcommand{\d}{{\rm{d}}}
\newcommand{\Cc}[1]{\mathbf{C_{\rm c}^{#1}}}
\newcommand{\C}[1]{\mathbf{C^{#1}}}
\newtheorem{theorem}{Theorem}[section]
\newtheorem{remark}{Remark}[section]
\newtheorem{proposition}{Proposition}[section]
\newtheorem{definition}{Definition}[section]
\theoremstyle{definition}
\renewcommand{\L}[1]{\mathbf{L^{\pmb{#1}}}}
\newcommand{\Lloc}[1]{\mathbf{L^{\pmb{#1}}_{loc}}}
\newcommand{\sign}{\mathrm{sign}}
\newcommand{\BV}{\mathbf{BV}}
\newcommand{\tv}{\mathrm{TV}}
\newcommand{\supp}{\mathrm{supp}}
\newcommand{\caratt}[1]{{\displaystyle\chi_{\strut{\textstyle #1}}}}
\title{An existence result for a constrained two-phase transition model with metastable phase for vehicular traffic}
\author{Mohamed Benyahia, Carlotta Donadello, Nikodem Dymski, Massimiliano D.\ Rosini}
\newcommand{\Addresses}{{
  \bigskip
  \footnotesize

  Mohamed Benyahia, \textsc{Gran Sasso Science Institute, Viale F. Crispi 7, 67100 L'Aquila, Italy}\par\nopagebreak
  \textit{E - mail address}: \texttt{benyahia.ramiz@gmail.com}

  \medskip

  Carlotta Donadello, \textsc{Laboratoire de math\'ematiques, CNRS UMR 6623, Universit\'e de Franche - Comt\'e, 16 route de Gray, 25030 Besan\c{c}on, France}\par\nopagebreak
  \textit{E - mail address}: \texttt{carlotta.donadello@univ-fcomte.fr}

  \medskip

  Nikodem Dymski, \textsc{Uniwersytet Marii Curie-Sk\l odowskiej, Plac Marii Curie-Sk\l odowskiej 1, 20 - 031 Lublin, Poland}, \textsc{Inria Sophia Antipolis-M\'editerran\'ee, Universit\'e C\^ote d'Azur, Inria, CNRS, LJAD, 06902 Sophia-Antipolis, France}\par\nopagebreak
  \textit{E - mail address}: \texttt{nikodem.dymski@inria.fr}

  \medskip

  Massimiliano D.\ Rosini, (Corresponding author), \textsc{Dipartimento di Matematica e Informatica, Universit\`a di Ferrara, Via Machiavelli 35, 44121 Ferrara, Italy}\par\nopagebreak
  \textit{E - mail address}: \texttt{rsnmsm@unife.it}
}}
\begin{document}
\allowdisplaybreaks

\maketitle

\begin{abstract}
In this paper we study a phase transition model for vehicular traffic flows.
Two phases are taken into account, according to whether the traffic is light or heavy. 
We assume that the two phases have a non-empty intersection, the so called \emph{metastable} phase.
The model is given by the Lighthill-Whitham-Richards model in the \emph{free-flow} phase and by the Aw-Rascle-Zhang model in the \emph{congested} phase.
In particular, we study the existence of solutions to Cauchy problems satisfying a local point constraint on the density flux.
We prove that if the constraint $F$ is higher than the minimal flux $f_{\rm c}^-$ of the metastable phase, then constrained Cauchy problems with initial data of bounded total variation admit globally defined solutions.
We also provide sufficient conditions on the initial data that guarantee the global existence of solutions also in the case $F < f_{\rm c}^-$.
These results are obtained by applying the wave-front tracking technique.

\bigskip\noindent
\textbf{2010 Mathematical Subject Classification.} Primary: 35L65, 90B20, 35L45
\\
\textbf{Key words.} Conservation laws, phase transitions, Lighthill-Whitham-Richards model, Aw-Rascle-Zhang model, point constraint on the density flux, Cauchy problem, wave-front tracking.
\end{abstract}

\maketitle

\section{Introduction}

In this paper we study one of the constrained phase transition models of hyperbolic conservation laws introduced in~\cite{edda-nikodem-mohamed}.
The application of such model is, for instance, the modelling of vehicular traffic along a road with pointlike inhomogeneities characterized by limited capacity, such as speed bumps, construction sites, tollbooths, etc.

The model considers two different phases corresponding to the \emph{congested phase} $\Omega_{\rm c}$ and the \emph{free-flow phase} $\Omega_{\rm f}$. 
The model is given by a $2\times 2$ system of conservation laws in the congested phase, coupled with a scalar conservation law in the free-flow phase. 
The coupling is achieved via \emph{phase transitions}, namely discontinuities between two states belonging to different phases and satisfying the Rankine-Hugoniot conditions.

The first two-phase model has been proposed by Colombo in~\cite{Colombo}.
The motivation stems from experimental data, according to which the density flux represented in the fundamental diagram is one-dimensional for high velocities, while it covers a two-dimensional domain for low velocities, see~\cite[Figure 1.1]{Colombo}. 
For this reason, it is reasonable to describe the dynamics in the congested regime with a $2\times 2$ system of conservation laws and those in the free regime with a scalar conservation law. 

Later, Goatin proposed in~\cite{goatin2006aw} a two-phase model obtained by coupling the ARZ model by Aw, Rascle and Zhang~\cite{AwRascle, Zhang} for the congested phase $\Omega_{\rm c}$, with the LWR model by Lighthill, Whitham and Richards~\cite{LighthillWhitham, Richards} for the free-flow phase $\Omega_{\rm f}$. 
We recall that this model has been recently generalized in~\cite{BenyahiaRosini01}.

Both the models introduced in~\cite{Colombo} and~\cite{goatin2006aw} assume that $\Omega_{\rm c} \cap \Omega_{\rm f} = \emptyset$.
The first two-phase model that considers a metastable phase $\Omega_{\rm c} \cap \Omega_{\rm f}\neq\emptyset$ has been introduced in~\cite{Blandin}.
We also recall that, differently from~\cite{BenyahiaRosini01, goatin2006aw}, for the models in~\cite{Colombo, Blandin} the density flux function vanishes at a maximal density, whose inverse corresponds to the average length of the vehicles.
Here we consider the case $\Omega_{\rm c} \cap \Omega_{\rm f}\neq\emptyset$.
For this reason, in order to ensure the well-posedness of the Cauchy problems, see \cite[Remark 2]{Colombo}, we also assume that $\Omega_{\rm f}$ is characterized by a unique value of the velocity, $V$.
At last, we consider an heterogeneous traffic with vehicles having different lengths and allow the density flux function to vanish at different densities.

These two-phase models have been recently generalized in~\cite{BenyahiaRosini02, edda-nikodem-mohamed} by considering Riemann problems, namely Cauchy problems for piecewise constant initial data with a single jump, coupled with a constraint on the density flux, so that at the interface $x = 0$ the density flux of the solution must be lower than a given constant quantity $F$.
This condition is referred to as \emph{unilateral point constraint} and can be thought of as a pointwise bottleneck at $x = 0$ that hinders the density flow, see~\cite{Rosinibook} and the references therein. 
In vehicular traffic, a point constraint accounts for inhomogeneities of the road and models, for instance, the presence of a toll gate across which the flow of the vehicles cannot exceed its capacity $F$.

In the case in which no constraint conditions are enforced, existence results for the Cauchy problems for the above mentioned two-phase transition models have already been established, see~\cite{Blandin, BenyahiaRosini01, ColomboGoatinPriuli, goatin2006aw}.
In the present paper, we focus on the constrained version proposed in~\cite{edda-nikodem-mohamed} for the model introduced in~\cite{BenyahiaRosini01} and prove an existence result for constrained Cauchy problems.
More precisely, we use the Riemann solvers established in~\cite{BenyahiaRosini01} and~\cite{edda-nikodem-mohamed} in a wave-front tracking scheme and prove that the obtained approximate solution $u_n$ converges (up to a subsequence) to a globally defined solution of the constrained Cauchy problem with general $\BV$-initial data, at least in the case $F \geq f_{\rm c}^-$, where the threshold value $f_{\rm c}^-$ is the minimal density flux of the metastable phase, see \figurename~\ref{f:notations}.
At last, in the case $F < f_{\rm c}^-$ we give sufficient conditions on the initial data that ensure the convergence of $u_n$ to a globally defined solution of the constrained Cauchy problem.

The paper is organized as follows.
In the next section we introduce the notations used throughout the paper, the model, the definitions of solutions to the unconstrained and constrained Cauchy problems, the main result in Theorem~\ref{t:mainF} and at last the Riemann solvers for the unconstrained and constrained Riemann problems.
In Section~\ref{s:exPT} we apply the model to reproduce the traffic across a toll gate. 
Finally, in the last section we defer the technical proofs.

\section{Notations, definitions and main result}

In this section we state the main assumptions on the parameters, collect useful notations, see \figurename~\ref{f:notations}, give the definition of solutions, state the main result in Theorem~\ref{t:mainF} and at last introduce the Riemann solvers.

\begin{figure}[!ht]
\begin{center}
\resizebox{\textwidth}{!}{
\def\ratio{1}
\def\pic{50mm}
\begin{tikzpicture}[every node/.style={anchor=south west,inner sep=0pt},x=1mm/\ratio, y=1mm/\ratio]
\node at (4,4) {\includegraphics[height=\pic]{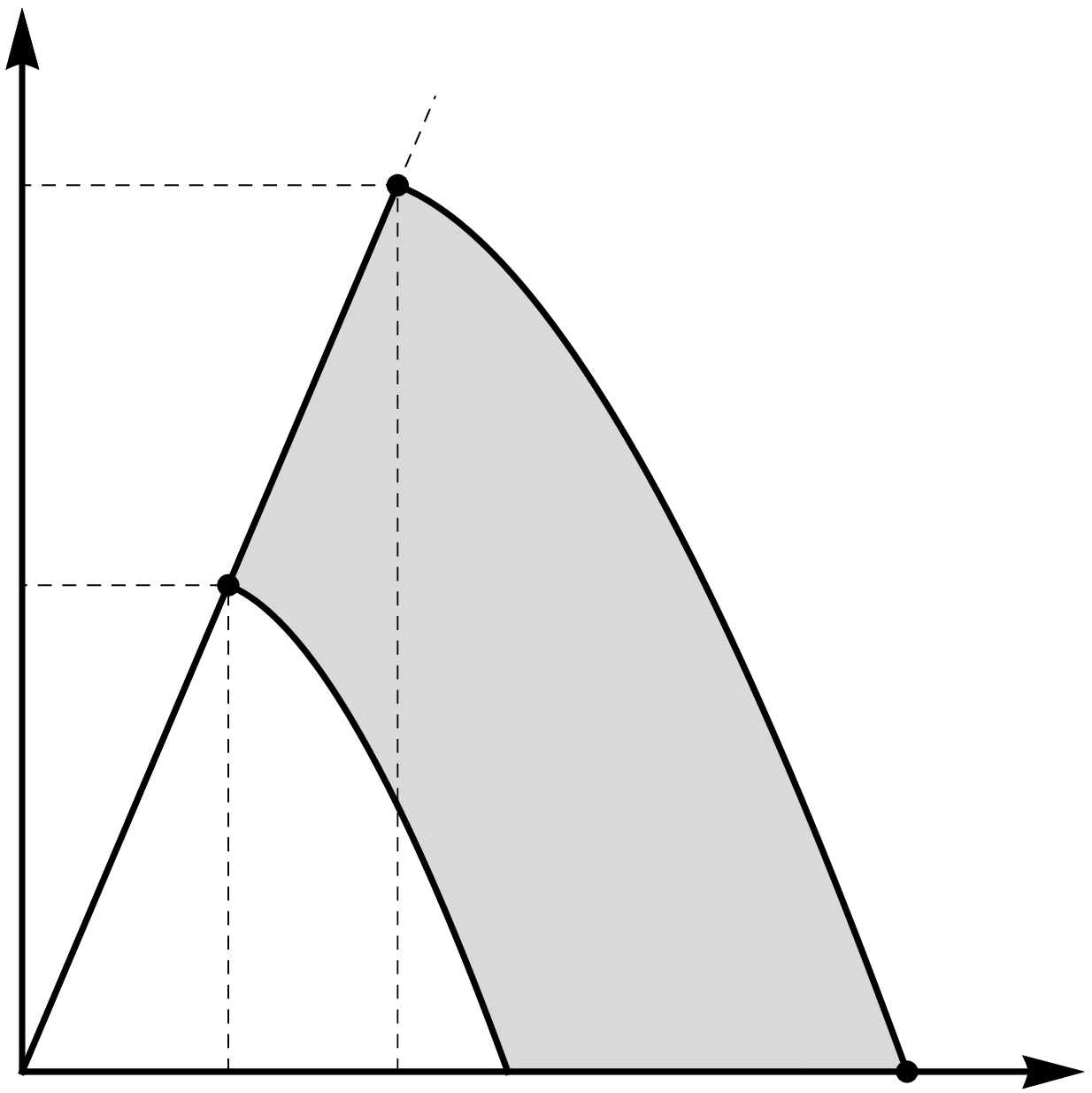}};
\node at (52,0) {$\rho$};
\node at (44,0) {$R$};
\node at (11.5,0) {$\rho^-$};
\node at (21,0) {$\rho^+$};
\node at (0,25) {$f_{\rm c}^-$};
\node at (0,43) {$f_{\rm c}^+$};
\node at (0,50) {$f$};
\node at (6,18) {$\Omega_{\rm f}^-$};
\node at (12,33) {$\Omega_{\rm f}^+$};
\node at (29,18) {$\Omega_{\rm c}$};
\node at (25,50) {$V$};
\end{tikzpicture}
\quad
\begin{tikzpicture}[every node/.style={anchor=south west,inner sep=0pt},x=1mm/\ratio, y=1mm/\ratio]
\node at (4,4) {\includegraphics[height=\pic]{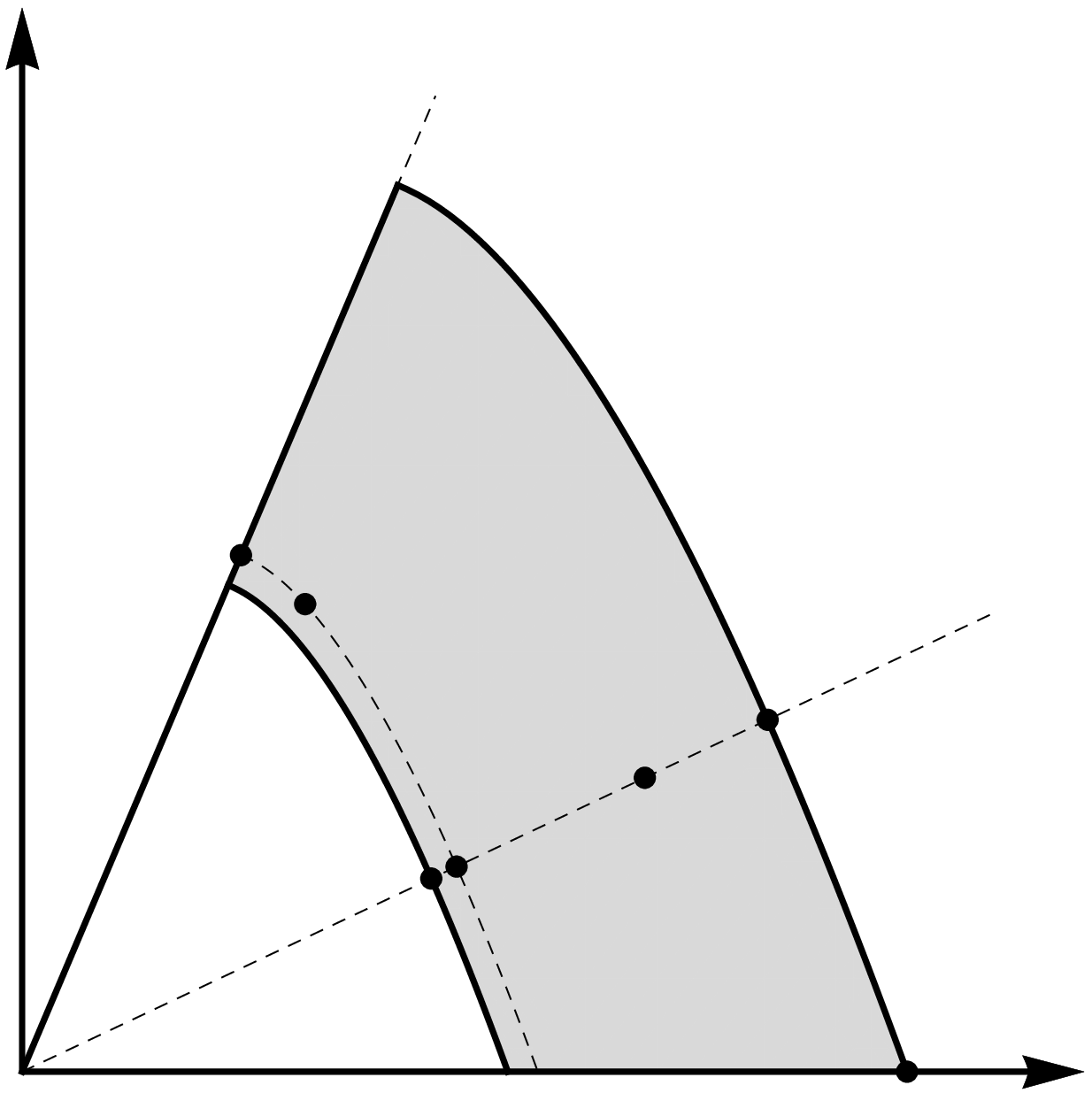}};
\node at (52,0) {$\rho$};
\node at (44,0) {$R$};
\node at (0,50) {$f$};
\node at (25,50) {$V$};
\node at (26.5,11.5) {$\mathtt{u}_*(u_\ell,u_r)$};
\node at (29,18.5) {$u_r$};
\node at (11,13) {$\mathtt{v}^-(u_r)$};
\node at (41,18) {$\mathtt{v}^+(u_r)$};
\node at (18,27) {$u_\ell$};
\node at (6,29) {$\omega(u_\ell)$};
\end{tikzpicture}
\quad
\begin{tikzpicture}[every node/.style={anchor=south west,inner sep=0pt},x=1mm/\ratio, y=1mm/\ratio]
\node at (4,4) {\includegraphics[height=\pic]{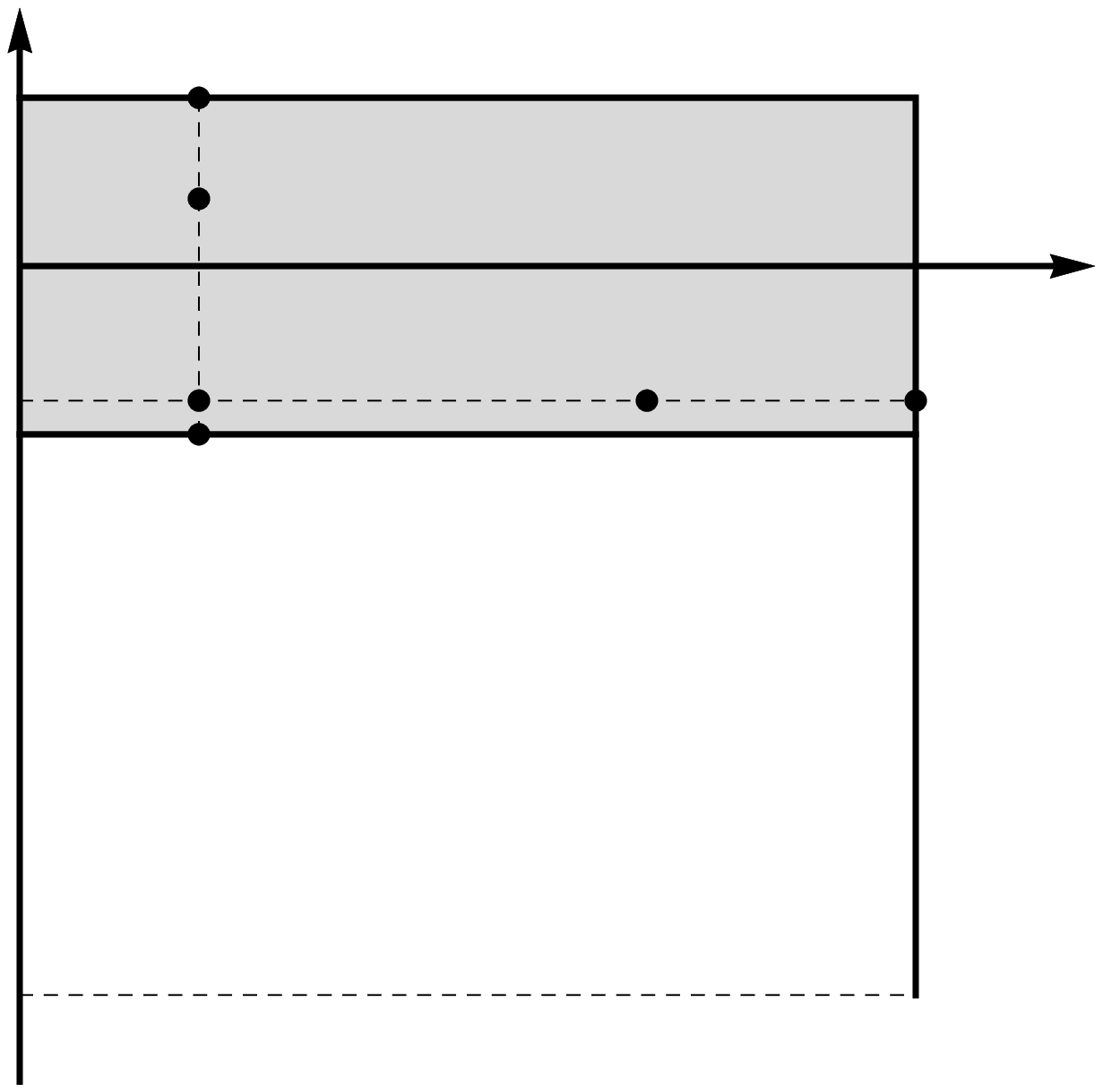}};
\node at (52,38) {$v$};
\node at (41,38) {$V$};
\node at (-1,33) {$w^-$};
\node at (-7,8) {$w^--1$};
\node at (-1,48.5) {$w^+$};
\node at (0,52) {$w$};
\node at (46,18) {$\Omega_{\rm f}^-$};
\node at (46,43) {$\Omega_{\rm f}^+$};
\node at (25,43) {$\Omega_{\rm c}$};
\node at (9,29.5) {$\mathtt{v}^-(u_r)$};
\node at (9,49.5) {$\mathtt{v}^+(u_r)$};
\node at (5.5,35.7) {$\mathtt{u}_*(u_\ell,u_r)$};
\node at (46,33.5) {$\omega(u_\ell)$};
\node at (9,45) {$u_r$};
\node at (34,35.7) {$u_\ell$};
\node at (52,0) {\color{white}{$\rho$}};
\end{tikzpicture}
}
\end{center}
\caption{Notations.}
\label{f:notations}
\end{figure}

\subsection{Notations}

Denote by $\rho \geq 0$ and $v \geq 0$ the density and the velocity of the vehicles, respectively.
Let $u \doteq (\rho,v)$ and $f(u) \doteq v\,\rho$ be the density flux.
If $V>0$ is the unique velocity in the free-flow phase $\Omega_{\rm f}$ and $\rho^+$ is the maximal density in $\Omega_{\rm f}$, then
\[
\Omega_{\rm f} \doteq \bigl\{ u \in \mathbb{R}_+^2 : \rho \leq \rho^+,\ v = V \bigr\},\]
where $\mathbb{R}_+ \doteq [0,\infty)$.
If the velocity $V$ is reached in the congested phase $\Omega_{\rm c}$ for densities ranging in $[\rho^-,\rho^+] \subset (0,\infty)$, then
\[
\Omega_{\rm c} \doteq \bigl\{ u \in \mathbb{R}_+^2 : v\le V,\ w^- \leq v + p(\rho) \leq w^+ \bigr\},
\]
where $w^\pm \doteq p(\rho^\pm) + V$.
Above $p \in {\mathbf{C^{2}}}((0,\infty);\mathbb{R})$ is an anticipation factor, which takes into account drivers' reactions to the state of traffic in front of them.
We assume that
\begin{align}\label{eq:p}
&p(0)=0,&
&p'(\rho)>0,&
&2\,p'(\rho) + p''(\rho) \, \rho>0&
\text{for every }\rho > 0.
\end{align}
Typical choices for $p$ are $p(\rho) \doteq \rho^\gamma$ with $\gamma>0$, see \cite{AwRascle}, and $p(\rho) \doteq V_{\rm ref} \ln(\rho/\rho_{\max})$ with $V_{\rm ref}>0$ and $\rho_{\max}>0$, see \cite{goatin2006aw}.

Let $f_{\rm c}^\pm \doteq V \, \rho^\pm$ and $R \doteq p^{-1}(w^+)>0$ be the maximal density (in the congested phase).
Let
\begin{align*}
&\Omega \doteq \Omega_{\rm f} \cup \Omega_{\rm c},&
&\Omega_{\rm f}^-\doteq \bigl\{ u \in \Omega_{\rm f} : \rho \in [0,\rho^-) \bigr\},&
&\Omega_{\rm f}^+\doteq \bigl\{ u \in \Omega_{\rm f} : \rho \in [\rho^-,\rho^+] \bigr\},&
&\Omega_{\rm c}^-\doteq\Omega_{\rm c}\setminus\Omega_{\rm f}^+.
\end{align*}
Notice that $\Omega_{\rm f}\cap\Omega_{\rm c} = \Omega_{\rm f}^+$.
We assume that
\begin{equation}\label{eq:TheMarsVolta}
v < p'(\rho) \, \rho \text{ for every } (\rho,v) \in \Omega_{\rm c}.
\end{equation}
The (extended) Lagrangian marker $\mathtt{w} \colon \Omega \to [w^--1,w^+]$ is defined by
\[
\mathtt{w}(u) \doteq
\begin{cases}
v + p(\rho)&\text{if }u \in \Omega_{\rm c},
\\
w^- -  1 + \dfrac{\rho}{\rho^-} &\text{if }u \in \Omega_{\rm f}^- .
\end{cases}
\]
Let $\mathtt{W} \colon \Omega \to [w^-,w^+]$ be defined by
\[
\mathtt{W}(u) \doteq \max\{w^-,\mathtt{w}(u)\}.
\]

The $2\times 2$ system of conservation laws describing the traffic in the congested phase has two characteristic families of Lax curves.
In the $(\rho,f)$-plane the Lax curves in $\Omega_{\rm c}$ of the first and second characteristic families passing through $\bar{u} = (\bar{\rho},\bar{v})\in \Omega_{\rm c}$ are respectively described by the graphs of the maps
\begin{align*}
\bigl[p^{-1}\bigl(\mathtt{w}(\bar{u})-V\bigr),p^{-1}\bigl(\mathtt{w}(\bar{u})\bigr)\bigr] \ni \rho &\mapsto \mathfrak{L}_{\mathtt{w}(\bar{u})}(\rho) \doteq f\bigl(\rho, \mathtt{w}(\bar{u})-p(\rho)\bigr),
\\
\bigl[p^{-1}(w^--\bar{v}),p^{-1}(w^+-\bar{v})\bigr]\ni \rho &\mapsto \bar{v} \, \rho.
\end{align*}

\begin{remark}
Conditions \eqref{eq:p} and \eqref{eq:TheMarsVolta} ensure that for any $w \in [w^-,w^+]$ the map $\rho \mapsto \mathfrak{L}_{w}(\rho) = (w-p(\rho))\,\rho$ is strictly concave and strictly decreasing in $[p^{-1}(w-V),p^{-1}(w)]$.
Indeed, for any $w \in [w^-,w^+]$ and $\rho \in [p^{-1}(w-V),p^{-1}(w)]$, we have that $(\rho,w-p(\rho)) \in \Omega_{\rm c}$ and therefore
\begin{align*}
\mathfrak{L}_{w}'(\rho) &= w-p(\rho)-p'(\rho)\,\rho < 0,&
\mathfrak{L}_{w}''(\rho) &= -2\,p(\rho)-p''(\rho)\,\rho < 0.
\end{align*}
If for instance $p(\rho) \doteq V_{\rm ref} \ln(\rho/\rho_{\max})$ with $V_{\rm ref}>0$ and $\rho_{\max}>0$, then $p'(\rho) \, \rho = V_{\rm ref}$ and \eqref{eq:TheMarsVolta} is equivalent to require $V < V_{\rm ref}$, while \eqref{eq:p} is trivial.
If for instance $p(\rho) \doteq \rho^\gamma$ with $\gamma>0$, then $\Omega_{\rm c} = \{ u \in \mathbb{R}_+^2 : v\le V,\ w^--v \leq \rho^\gamma \leq w^+-v \}$ and therefore
\[
\min_{u \in \Omega_{\rm c}} \bigl(p'(\rho) \, \rho - v\bigr) =
\min_{u \in \Omega_{\rm c}} \bigl(\gamma\,\rho^\gamma - v\bigr) =
\gamma\,w^--(\gamma+1)\,V,
\]
hence \eqref{eq:TheMarsVolta} is equivalent to require $(\gamma+1)\,V < \gamma\,w^-$, while \eqref{eq:p} is trivial.
\end{remark}

We introduce the following functions, see \figurename~\ref{f:notations}:
\begin{align*}
\omega &\colon \Omega_{\rm c} \to \Omega_{\rm f}^+,&
&u=\omega(\bar{u})\Longleftrightarrow
\begin{cases}
\mathtt{w}(u)=\mathtt{w}(\bar{u}),\\
v=V,
\end{cases}
\\
\mathtt{v}^\pm &\colon \Omega \to \Omega_{\rm c},&
&u^\pm=\mathtt{v}^\pm(\bar{u})\Longleftrightarrow
\begin{cases}
\mathtt{w}(u^\pm)=w^\pm,\\
v^\pm=\bar{v},
\end{cases}
\\
{\mathtt u}_* &\colon \Omega^2 \to \Omega_{\rm c},&
&u_*={\mathtt u}_*(u_\ell,u_r)\Longleftrightarrow
\begin{cases}
\mathtt{w}(u_*) = \mathtt{W}(u_\ell),\\
v_* = v_r,
\end{cases}
\\
\Lambda &\colon \bigl\{(u_\ell,u_r) \in \Omega^2 : \rho_\ell \neq \rho_r\bigr\} \to \mathbb{R},&
&\Lambda(u_\ell, u_r) \doteq \frac{f(u_r)-f(u_\ell)}{\rho_r-\rho_\ell}.
\end{align*}
Notice that:
\begin{itemize}[leftmargin=*]\setlength{\itemsep}{0cm}%
\item
the point $\omega(\bar{u})$ is the intersection of the Lax curve of the first characteristic family passing through $\bar{u}$ and $\Omega_{\rm f}^+$, namely the Lax curve of the second characteristic family passing through $(0,V)$;
\item
for any $w \in [w^-,w^+]$ the point $(p^{-1}(w), 0)$ is the intersection of the Lax curve of the first characteristic family corresponding to $w$ and the segment $\{(\rho,v) \in \Omega_{\rm c} : v = 0\}$, namely the Lax curve of the second characteristic family passing through $(p^{-1}(w^\pm),0)$;
\item
the point $\mathtt{v}^\pm(\bar{u})$ is the intersection of the Lax curve of the second characteristic family passing through $\bar{u}$ and $\{u \in \Omega_{\rm c} : \mathtt{w}(u) = w^\pm\}$, namely the Lax curve of the first characteristic family passing through $(p^{-1}(w^\pm),0)$;
\item
for any $u_\ell$, $u_r \in \Omega_{\rm c}$ the point $\mathtt{u}_*(u_\ell,u_r)$ is the intersection between the Lax curve of the first characteristic family passing through $u_\ell$ and the Lax curve of the second characteristic family passing through $u_r$;
\item
$\Lambda(u_\ell, u_r)$ is the speed of a discontinuity $(u_\ell,u_r)$, that in the $(\rho,f)$-coordinates coincides with the slope of the segment connecting $u_\ell$ and $u_r$.
\end{itemize}
\noindent
Observe that by definition $\mathtt{v}^\pm(\bar{u}) = \mathtt{u}_*((p^{-1}(w^\pm),0),\bar{u})$ and $\omega(\bar{u}) = \mathtt{u}_*(\bar{u},(0,V))$.

We denote by $\mathcal{R}$ and $\mathcal{R}_F$ the Riemann solver and the constrained Riemann solver introduced in~\cite{BenyahiaRosini01} and~\cite{edda-nikodem-mohamed}, respectively, see Section~\ref{s:RiemannSs} for more details.

\subsection{The constrained Cauchy problem} 

We study the constrained Cauchy problem for the phase transition model
\begin{align}\label{eq:system}
&\begin{array}{l}
\text{\textbf{Free-flow}}\\[2pt]
\begin{cases}
u \in \Omega_{\rm f},\\
\rho_t+(\rho\,V)_x=0,\\
v=V,
\end{cases}
\end{array}
&
\begin{array}{l}
\text{\textbf{Congested flow}}\\[2pt]
\begin{cases}
u \in \Omega_{\rm c},\\
\rho_t+(\rho\,v)_x=0,\\
\bigl(\rho\,\mathtt{w}(u)\bigr)_t + \bigl(\rho\,\mathtt{w}(u)\,v\bigr)_x=0,
\end{cases}
\end{array}
\end{align}
with initial datum
\begin{equation}\label{eq:initdat}
u(0,x)=u^o(x)
\end{equation}
and local point constraint on the density flux at $x=0$
\begin{equation}\label{eq:const}
f\bigl(u(t,0_\pm)\bigr) \le F,
\end{equation}
where $F \in [0,f_{\rm c}^+]$ is a given constant quantity.
To this aim we apply the wave-front tracking algorithm, which is based on the definition of the Riemann solvers defined in the next sections.

Introduce, see \figurename~\ref{f:notationsF}, $v_F^\pm \in [0,V]$ and $w_F \in [w^--1,w^+]$ defined by the following conditions:
\begin{align*}
&\text{if }F = f_{\rm c}^+:&
&v_F^+ \doteq V,&
&v_F^- \doteq V,&
&w_F \doteq w^+,
\\
&\text{if }F \in [f_{\rm c}^-,f_{\rm c}^+):&
&v_F^+ \doteq V,&
&v_F^-+p(F/v_F^-) = w^+,&
&w_F\doteq p\left(F/V\right)+V,
\\
&\text{if }F \in (0,f_{\rm c}^-):&
&v_F^++p(F/v_F^+) = w^-,&
&v_F^-+p(F/v_F^-) = w^+,&
&w_F \doteq w^- - 1 + \frac{F}{f_{\rm c}^-},
\\
&\text{if }F = 0:&
&v_F^+ \doteq 0,&
&v_F^- \doteq 0,&
&w_F \doteq w^- - 1.
\end{align*}

\begin{figure}[!ht]
\begin{center}
\begin{tikzpicture}[every node/.style={anchor=south west,inner sep=0pt},x=1mm, y=1mm]
\node at (4,4) {\includegraphics[height=50mm]{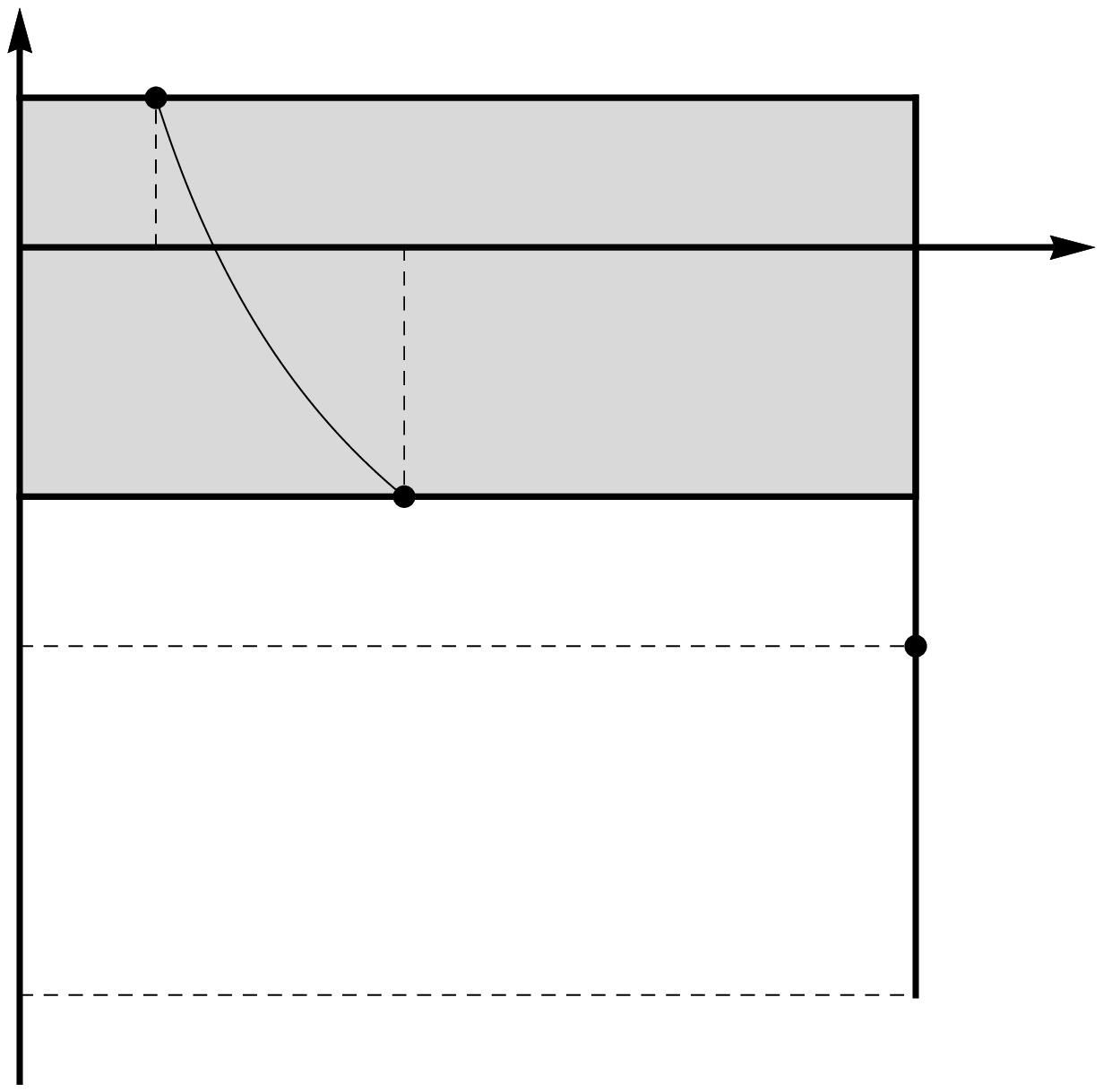}};
\node at (52,38) {$v$};
\node at (41,38) {$V$};
\node at (-1,31) {$w^-$};
\node at (-1,23) {$w_F$};
\node at (-7,8) {$w^--1$};
\node at (-1,48.5) {$w^+$};
\node at (0,52) {$w$};
\node at (9,38) {$v_F^-$};
\node at (20,43) {$v_F^+$};
\node at (52,0) {\color{white}{$\rho$}};
\end{tikzpicture}
\hspace{2cm}
\begin{tikzpicture}[every node/.style={anchor=south west,inner sep=0pt},x=1mm, y=1mm]
\node at (4,4) {\includegraphics[height=50mm]{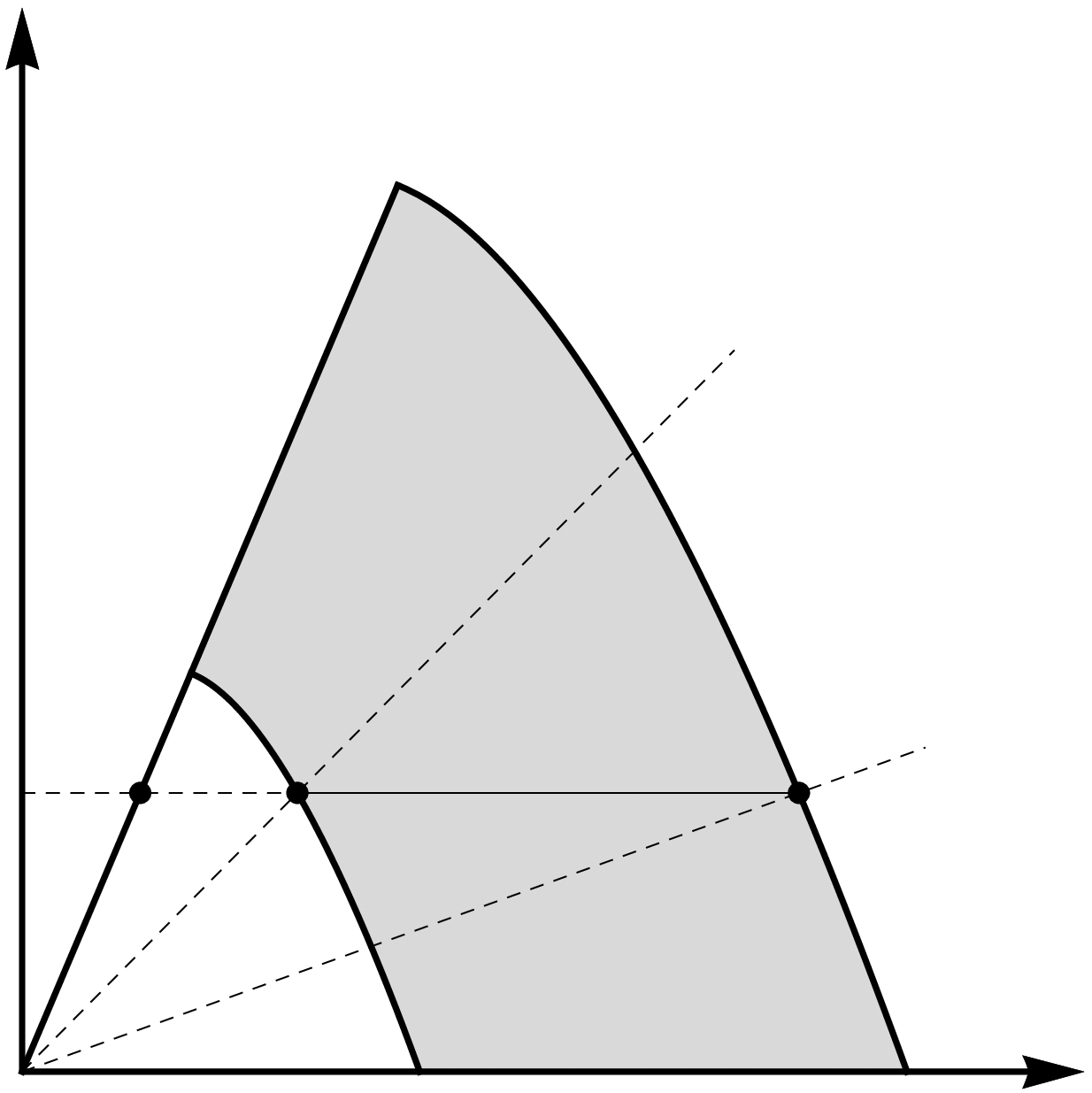}};
\node at (52,0) {$\rho$};
\node at (44,0) {$R$};
\node at (45,20) {$v_F^-$};
\node at (37,39) {$v_F^+$};
\node at (0,50) {$f$};
\node at (0,16) {$F$};
\end{tikzpicture}
\end{center}
\caption{Geometrical meaning of $w_F$, $v_F^\pm$ and $\Xi_F$ in the case $F \in (0,f_{\rm c}^-)$.
The curve in the figure on the left is the graph of $\Xi_F$, which corresponds to the horizontal solid segment in the figure on the right.}
\label{f:notationsF}
\end{figure}

For any $F \in (0,f_{\rm c}^+)$, let $\Xi_F:[v_F^-,v_F^+]\to[w^-,w^+]$ be given by $\Xi_F(v) \doteq v+p(F/v)$, see \figurename~\ref{f:notationsF}.
Notice that $\Xi_F$ is strictly decreasing because by \eqref{eq:TheMarsVolta}
\begin{align*}
\left(\frac{F}{v},v\right) &\in \Omega_{\rm c}&
&\Rightarrow&
\Xi_F'(v) &= 1-p'\left(\frac{F}{v}\right) \frac{F}{v^2} < 0,
\end{align*}
moreover it is strictly convex because by \eqref{eq:p}
\[
\Xi_F''(v) = \left[ 2\,p'\left(\frac{F}{v}\right)+p''\left(\frac{F}{v}\right)\frac{F}{v} \right] \frac{F}{v^3} >0.
\]

The notion of solution to Cauchy problem \eqref{eq:system}, \eqref{eq:initdat} necessarily involves both the notions of solution to the Cauchy problems for LWR and ARZ models, that have to be combined by defining which phase transitions are admissible, see~\cite{BenyahiaRosini01}.
Since the characteristic field corresponding to the free phase is linearly degenerate, a discontinuity between states in $\Omega_{\rm f}$ is entropic if and only if it satisfies the corresponding Rankine-Hugoniot condition, namely its speed of propagation is $V$.
For this reason we consider only the entropy-entropy flux pair
\begin{align*}
\mathtt{E}^k(u) &\doteq \begin{cases}
0 & \hbox{if } v \geq k,\\
\dfrac{\rho }{p^{-1}\bigl(\mathtt{W}(u)-k\bigr)} - 1 & \hbox{if } v < k,
\end{cases}
&
\mathtt{Q}^k(u) &\doteq \begin{cases}
0 & \hbox{if } v \geq k,\\
\dfrac{f(u)}{p^{-1}\bigl(\mathtt{W}(u)-k\bigr)} - k & \hbox{if } v < k,
\end{cases}
\end{align*}
for $u \in \Omega$ and $k \in [0,V]$, which is obtained by adapting the entropy-entropy flux pair introduced in~\cite{AndreianovDonadelloRosiniM3ASS2016} for the ARZ model. 

\begin{definition}\label{def:solunconstmod}
Let $u^o \in \BV(\R;\Omega)$.
We say that $u \in \L\infty((0,\infty);\BV(\R;\Omega)) \cap \C0(\R_+;\Lloc1(\R;\Omega))$ is a solution to Cauchy problem \eqref{eq:system}, \eqref{eq:initdat} if the following holds:
\begin{enumerate}[label={\textbf{(S.\arabic*)}},leftmargin=*]\setlength{\itemsep}{0cm}%
\item Condition \eqref{eq:initdat} holds for a.e.~$x \in \R$, namely
\[u(0,x) = u^o(x)\qquad \text{for a.e.~}x \in \R.\]
\item For any $\phi \in \Cc\infty((0,\infty)\times \R;\R)$ we have
\[
\int_0^\infty \int_{\R} 
\bigl(\rho \, \phi_t + f(u) \, \phi_x\bigr)
\begin{pmatrix}
1\\\mathtt{W}(u)
\end{pmatrix} \d x\,\d t = 
\begin{pmatrix}
0\\0
\end{pmatrix}.
\]
\item For any $k \in [0,V]$ and $\phi \in \Cc\infty((0,\infty)\times \R;\R)$ such that $\phi \geq0$ we have
\[
\int_0^\infty \int_{\R} \bigl(\mathtt{E}^k(u) \, \phi_t + \mathtt{Q}^k(u) \, \phi_x\bigr) \, \d x\,\d t \geq 0 .
\]
\end{enumerate}
\end{definition}

We recall the existence result proved in~\cite[Theorem~2.8]{BenyahiaRosini01}.

\begin{theorem}
Cauchy problem \eqref{eq:system}, \eqref{eq:initdat} with initial datum $u^o \in \L1 \cap \BV(\R; \Omega)$ admits a solution $u$ in the sense of Definition~\ref{def:solunconstmod}; moreover there exist two constants $C^o$ and $L^o$ such that for any $t,s \geq 0$
\begin{align*}
& \tv\bigl(u(t)\bigr) \leq \tv(u^o) ,&
&\left\|u(t)\right\|_{\L\infty(\R; \Omega)} \leq C^o ,&
&\left\|u(t) - u(s)\right\|_{\L1(\R; \Omega)} \leq L^o \ |t - s| .
\end{align*}
\end{theorem}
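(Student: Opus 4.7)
The strategy is a standard wave-front tracking argument followed by a compactness pass. I would first fix a sequence $h_n\downarrow 0$ and a sequence of piecewise constant approximations $u_n^o$ of $u^o$, taking values in a discrete grid adapted to the Riemann solver $\mathcal{R}$ and satisfying $u_n^o\to u^o$ in $\L1$ together with $\tv(u_n^o)\le \tv(u^o)$. At every discontinuity of $u_n^o$, $\mathcal{R}$ produces a local Riemann solution; rarefaction fans, when present, are discretized into rarefaction shocks of amplitude at most $h_n$. Piecing together these local solutions and propagating the fronts, resolving every interaction again via $\mathcal{R}$, yields an approximate solution $u_n\in\L\infty((0,T_n);\BV(\R;\Omega))$ as long as the number of fronts remains finite.

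The heart of the argument is a uniform control, independent of $n$ and $t$, of the total variation, the $\L\infty$ norm, and the number of interactions of $u_n$. For this I would change variables to the Riemann coordinates $(v,\mathtt{w})$, in which the $\Omega_{\rm c}$-dynamics is of Temple class: 1-waves preserve $\mathtt{w}$, 2-contact discontinuities preserve $v$. Since in $\Omega_{\rm f}$ one has $v=V$ and $\mathtt{w}$ has been extended affinely in $\rho$ on $\Omega_{\rm f}^-$, the jumps of $v$ and $\mathtt{w}$ across genuine waves can be tracked separately. I would then introduce a Glimm-type functional of the form
\[
\Upsilon\bigl(u_n(t)\bigr) \doteq \tv\bigl(v;u_n(t)\bigr) + \tv\bigl(\mathtt{w};u_n(t)\bigr) + K\, Q\bigl(u_n(t)\bigr),
\]
where $Q$ is an interaction potential counting approaching couples of fronts and $K$ is chosen large enough that $\Upsilon$ is non-increasing at each interaction pattern catalogued by $\mathcal{R}$, including the phase transitions between $\Omega_{\rm f}$ and $\Omega_{\rm c}^-$. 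This gives simultaneously $\tv(u_n(t))\le C\,\tv(u^o)$ and an a priori bound on the number of interactions, so that $T_n=+\infty$ and the approximate solutions are globally defined; the $\L\infty$ bound follows because $(v,\mathtt{w})$ is confined to an invariant region depending only on the $\L\infty$-norm of the initial trace in these coordinates.

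Once the BV bound is secured, the $\L1$ Lipschitz continuity in time is the usual wave-front tracking estimate
\[
\|u_n(t)-u_n(s)\|_{\L1(\R;\Omega)} \le \Lambda_\infty\, \tv\bigl(u_n(\min\{t,s\})\bigr)\, |t-s|,
\]
where $\Lambda_\infty$ is an upper bound on the absolute value of $\Lambda(u_\ell,u_r)$ over all pairs in $\Omega$. Helly's theorem then yields a subsequence $u_n\to u$ in $\C0(\R_+;\L1_{\rm loc}(\R;\Omega))$, with $u\in\L\infty((0,\infty);\BV(\R;\Omega))$ and the three quantitative bounds inherited from $u_n$. The distributional formulation and the entropy inequality against $(\mathtt{E}^k,\mathtt{Q}^k)$ hold at every Riemann fan by construction of $\mathcal{R}$, and pass to the limit by standard arguments, yielding $u$ as a solution in the sense of Definition~\ref{def:solunconstmod}.

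The main obstacle is the Glimm-type estimate through interactions involving phase transitions. Unlike the purely congested ARZ case, waves can create, destroy, or change the type of a phase transition when they meet, and the metastable overlap $\Omega_{\rm f}^+=\Omega_{\rm f}\cap\Omega_{\rm c}$ allows a front to cross between phases in ways that produce new waves of the opposite family. The calibration of the weights in $\Upsilon$ so that every one of the finitely many interaction types produced by $\mathcal{R}$ is non-expansive -- or at worst increases $\tv(v;\cdot)+\tv(\mathtt{w};\cdot)$ by an amount compensated by a decrease in $KQ$ -- is the technical core of the argument and must be verified case by case.
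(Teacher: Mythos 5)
Your overall strategy --- wave-front tracking driven by the Riemann solver $\mathcal{R}$, discretized rarefactions, a total variation bound in the $(v,\mathtt{w})$ coordinates, Helly's theorem, and passage to the limit in the weak and entropy formulations --- is the strategy behind this statement. Note first that the paper does not prove this theorem at all: it is recalled from \cite[Theorem~2.8]{BenyahiaRosini01}, and the closest in-paper analogue is the proof of Theorem~\ref{t:mainF} in Section~\ref{s:mainF}, whose interaction cases away from $x=0$ are precisely the unconstrained ones. The genuine difference lies in the functional. You introduce a Glimm functional $\tv(v)+\tv(\mathtt{w})+K\,Q$ with a quadratic interaction potential and a weight $K$ ``chosen large enough''. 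The paper's mechanism is simpler and sharper: in the $(v,\mathtt{w})$ coordinates the system is of Temple class, and the case-by-case analysis (cases CD-S through PT-RS in Proposition~\ref{p:interest}) shows that $\tv(v)+\tv(\mathtt{w})$ alone is non-increasing across \emph{every} interaction, phase transitions included; no potential, no weight calibration, and no smallness of the data are required. This is also what makes the clean bound $\tv(u(t))\le\tv(u^o)$, with constant $1$ rather than some $C$, attainable; your $\Upsilon$ would only give $\tv(u_n(t))\le\tv(u^o_n)+K\,Q(u^o_n)$.

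Two soft spots remain in your write-up. First, you defer the entire case-by-case verification as ``the technical core''; that verification is the actual content of the proof, and carrying it out reveals that the quadratic potential is unnecessary --- whereas if you insist on $K\,Q$, you must check that any increase of $\tv(v)+\tv(\mathtt{w})$ at an interaction is dominated by the decrease of $Q$, which is not automatic for large $\BV$ data. Second, a non-increasing functional does not by itself bound the number of interactions: several interaction types (CD-S, S-S, PT-S, CD-RS) leave both the functional and the number of fronts unchanged. The paper needs a separate combinatorial argument (Proposition~\ref{p:inter}): the quantity $2^n\mathcal{T}_n/\varepsilon_n+\sharp$ strictly decreases except for a short list of interactions involving a contact discontinuity, whose number is then bounded by tracking how each CD can interact. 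Your proposal needs an analogous step before concluding that the approximate solutions are globally defined.
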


In the following definition we introduce the notion of solution to constrained Cauchy problem \eqref{eq:system}, \eqref{eq:initdat}, \eqref{eq:const}, which is obtained by adapting that introduced in Definition~\ref{def:solunconstmod} for Cauchy problem \eqref{eq:system}, \eqref{eq:initdat}.

\begin{definition}\label{def:solconstmod}
Let $u^o \in \BV(\R;\Omega)$.
We say that $u \in \L\infty\left((0,\infty);\BV(\R;\Omega)\right) \cap \C0\left(\R_+;\Lloc1(\R;\Omega)\right)$ is a solution to constrained Cauchy problem \eqref{eq:system}, \eqref{eq:initdat}, \eqref{eq:const} if the following holds:
\begin{enumerate}[label={\textbf{(CS.\arabic*)}},leftmargin=*]\setlength{\itemsep}{0cm}%
\item\label{CS1} Condition \eqref{eq:initdat} holds for a.e.~$x \in \R$, namely
\[u(0,x) = u^o(x)\qquad \text{for a.e.~}x \in \R.\]
\item\label{CS2} For any $\phi \in \Cc\infty((0,\infty)\times\R; \R)$ we have
\begin{equation}\label{e:AdrianBelew0}
\int_0^\infty
\int_{\R} 
\bigl( \rho \, \phi_t
+ f(u)\, \phi_x \bigr) \,
\d x\, \d t
=0
\end{equation}
and if $\phi(\cdot,0)\equiv0$ then
\begin{equation}\label{e:AdrianBelew1}
\int_0^\infty
\int_{\R} 
\bigl( \rho \, \phi_t
+ f(u)\, \phi_x \bigr) \, \mathtt{W}(u) \,
\d x\, \d t
=0.
\end{equation}

\item\label{CS3} For any $k \in [0,V]$ and $\phi \in \Cc\infty((0,\infty)\times \R;\R)$ such that $\phi(\cdot,0)\equiv0$ and $\phi \geq0$ we have
\begin{equation}
\label{e:Deftones}
\int_0^\infty
\int_{\R} \bigl(\mathtt{E}^k(u) \, \phi_t + \mathtt{Q}^k(u) \, \phi_x\bigr) \, \d x\, \d t \geq 0.
\end{equation}

\item\label{CS4} Condition \eqref{eq:const} holds for a.e.~$t >0$, namely
\[f\bigl(u(t,0_\pm)\bigr) \le F \qquad \text{for a.e.~}t >0.\]
\end{enumerate}
\end{definition}

In the following proposition we state which discontinuities are admissible for the solutions to \eqref{eq:system}, \eqref{eq:initdat}, \eqref{eq:const}.

\begin{proposition}\label{p:obvious}
Let $u$ be a solution of constrained Cauchy problem \eqref{eq:system}, \eqref{eq:initdat}, \eqref{eq:const} in the sense of Definition~\ref{def:solconstmod}. 
Then $u$ has the following properties:
\begin{itemize}
\item Any discontinuity $\delta(t)$ of $x\mapsto u(t,x)$ satisfies the first Rankine-Hugoniot jump condition
\begin{equation}
\Bigl[\rho\bigl(t,\delta(t)_+\bigr)-\rho\bigl(t,\delta(t)_-\bigr)\Bigr] \, \dot{\delta}(t) = f\bigl(u(t,\delta(t)_+)\bigr)-f\bigl(u(t,\delta(t)_-)\bigr),
\label{e:RH1}
\end{equation}
and if $\delta(t)\neq0$, then it satisfies also the second Rankine-Hugoniot jump condition
\begin{align}\nonumber&\
\Bigl[\rho\bigl(t,\delta(t)_+\bigr)\,\mathtt{W}\Bigl(u\bigl(t,\delta(t)_+\bigr)\Bigr)-\rho\bigl(t,\delta(t)_-\bigr)\,\mathtt{W}\Bigl(u\bigl(t,\delta(t)_-\bigr)\Bigr)\Bigr] \, \dot{\delta}(t)
\\
=&\ f\Bigl(u\bigl(t,\delta(t)_+\bigr)\Bigr)\,\mathtt{W}\Bigl(u\bigl(t,\delta(t)_+\bigr)\Bigr)-f\Bigl(u\bigl(t,\delta(t)_-\bigr)\Bigr)\,\mathtt{W}\Bigl(u\bigl(t,\delta(t)_-\bigr)\Bigr).
\label{e:RH2}
\end{align}

\item Any discontinuity of $u$ away from the constraint is classical, i.e.~it satisfies the Lax entropy inequalities.
\item Non-classical discontinuities of $u$ may occur only at the constraint location $x=0$, and in this case the (density) flux at $x=0$ does not exceed the maximal flux $F$ allowed by the constraint.
\end{itemize}
\end{proposition}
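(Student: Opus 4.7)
The plan is to extract each of the three listed properties directly from the corresponding item in Definition~\ref{def:solconstmod} by the standard distributional localization argument, carefully separating what can be tested against functions supported across the line $\{x=0\}$ from what cannot.

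First I would handle the Rankine--Hugoniot conditions. Pick any discontinuity curve $t \mapsto \delta(t)$ of $u$. Localizing the weak formulation \eqref{e:AdrianBelew0} by a sequence of test functions concentrating on a small segment transversal to $\delta$ at $(t_0,\delta(t_0))$ produces \eqref{e:RH1}; since \eqref{e:AdrianBelew0} must hold for every $\phi \in \Cc\infty((0,\infty)\times\R;\R)$ with no restriction on $\{x=0\}$, this works for every discontinuity curve, including those sitting on the constraint. The same localization applied to \eqref{e:AdrianBelew1} yields \eqref{e:RH2}, but now only through localizing sequences that vanish on $\{x=0\}$; hence \eqref{e:RH2} is recovered precisely when $\delta(t)\neq 0$, since in that case we may take a neighborhood of $(t,\delta(t))$ disjoint from the constraint line.

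Next I would prove that discontinuities away from $\{x=0\}$ are classical. Fix $(t_0,x_0)$ with $x_0 \neq 0$ and choose non-negative test functions $\phi$ supported in a small neighborhood of $(t_0,x_0)$ not meeting $\{x=0\}$; such $\phi$ automatically satisfy $\phi(\cdot,0)\equiv 0$, so \eqref{e:Deftones} applies for every $k \in [0,V]$. The key input is that the family $\{(\mathtt{E}^k,\mathtt{Q}^k)\}_{k \in [0,V]}$, adapted from~\cite{AndreianovDonadelloRosiniM3ASS2016} and already exploited in~\cite{BenyahiaRosini01} for the unconstrained version of~\eqref{eq:system}, is rich enough to single out exactly the Lax-admissible jumps for all wave families arising in the model (LWR waves in $\Omega_{\rm f}$, shocks and rarefactions of the genuinely nonlinear ARZ field in $\Omega_{\rm c}$, contact discontinuities of the linearly degenerate field, and admissible phase transitions between $\Omega_{\rm f}$ and $\Omega_{\rm c}$). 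Combined with \eqref{e:RH1} and \eqref{e:RH2}, this gives the second bullet.

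The third bullet is then essentially a repackaging: since the entropy inequality \eqref{e:Deftones} is \emph{not} enforced for test functions whose trace on $\{x=0\}$ does not vanish, non-classical jumps are permitted only at the constraint location, and the flux bound at such jumps is precisely~\ref{CS4}, with one-sided traces $f(u(t,0_\pm))$ well defined for a.e.~$t>0$ thanks to the $\BV$ regularity in space. The main obstacle, and the step in which we rely most heavily on earlier results, is the classification invoked in the preceding paragraph: verifying that the one-parameter entropy family $(\mathtt{E}^k,\mathtt{Q}^k)$ really captures Lax-admissibility for the full two-phase system, in particular for discontinuities straddling $\Omega_{\rm f}$ and $\Omega_{\rm c}$, is the nontrivial analytic core and is inherited from~\cite{BenyahiaRosini01}; everything else in the proposition is routine distribution theory.
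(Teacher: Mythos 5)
Your proposal is correct and follows essentially the same route as the paper's own (very terse) proof, which simply observes that all three properties follow directly from \ref{CS2}, \ref{CS3} and \ref{CS4} via the standard localization of the weak and entropy formulations, with the restriction $\phi(\cdot,0)\equiv0$ accounting for why \eqref{e:RH2} and the Lax conditions are only recovered away from $x=0$. The only content the paper adds that you omit is the remark rewriting \eqref{e:RH1}--\eqref{e:RH2} in the form $[\mathtt{W}(u(t,0_+))-\mathtt{W}(u(t,0_-))]\,[v(t,0_-)-\dot{\delta}(t)]\,\rho(t,0_-)=0$, used to check that shocks, phase transitions and contact discontinuities all satisfy \eqref{e:RH2}; this is a side observation rather than a gap in your argument.
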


\begin{proof}
These properties follow directly from~\ref{CS2}, \ref{CS3} and~\ref{CS4}.
Let us just underline that \eqref{e:RH1}, \eqref{e:RH2} are equivalent to
\begin{align*}
\Bigl[v(t,0_+)-\dot{\delta}(t)\Bigr] \, \rho(t,0_+)&=
\Bigl[v(t,0_-)-\dot{\delta}(t)\Bigr] \, \rho(t,0_-),
\\
\Bigl[\mathtt{W}\bigl(u(t,0_+)\bigr)-\mathtt{W}\bigl(u(t,0_-)\bigr)\Bigr]
\Bigl[v(t,0_-)-\dot{\delta}(t)\Bigr] \, \rho(t,0_-)&=0.
\end{align*}
In particular phase transitions and shocks are admissible because for them $\mathtt{W}(u(t,0_+)) = \mathtt{W}(u(t,0_-))$, while the contact discontinuities are admissible because for them $\dot{\delta}(t) = v(t,0_\pm)$.
\end{proof}

\begin{remark}
Differently from any solution to Cauchy problem \eqref{eq:system}, \eqref{eq:initdat}, a solution $u$ to constrained Cauchy problem \eqref{eq:system}, \eqref{eq:initdat}, \eqref{eq:const} does not satisfy in general the second Rankine-Hugoniot condition \eqref{e:RH2} along $x=0$
\[
\rho(t,0_-) \, \mathtt{W}\bigl(u(t,0_-)\bigr) \, v(t,0_-) = \rho(t,0_+) \, \mathtt{W}\bigl(u(t,0_+)\bigr) \, v(t,0_+)
\qquad
\text{for a.e.~}t>0.
\]
Indeed the (extended) linearized momentum $\rho \, \mathtt{W}(u)$ is conserved across (classical) shocks and phase transitions, but in general it is not conserved across non-classical shocks even if they are between states in $\Omega_{\rm c}$.
As a consequence, a solution to \eqref{eq:system}, \eqref{eq:initdat}, \eqref{eq:const} taking values in $\Omega_{\rm c}$ is not necessarily a weak solution to the $2\times2$ system of conservation laws in \eqref{eq:system} for the congested flow.
For this reason in \eqref{e:AdrianBelew1} (and then also in \eqref{e:Deftones}) we consider test functions $\phi$ such that $\phi(\cdot,0)\equiv0$.

This is in the same spirit of the solutions considered in \cite{BenyahiaRosini02, dellemonachegoatin, MaxNikPaola, edda-nikodem-mohamed, GaravelloGoatin2011} for traffic through locations with reduced capacity.
However, with this choice for the test functions in \eqref{e:AdrianBelew1} and \eqref{e:Deftones} we loose the possibility to better characterize the (density) flux at $x=0$ associated to non-classical shocks.
In fact, differently from what is proved in~\cite{ColomboGoatinConstraint} for the LWR model and in~\cite{BCJMU-order2} for the ARZ model, we cannot ensure that the flux of the non-classical shocks of any solution is equal to the maximal flux $F$ allowed by the constraint.
Nevertheless, in Section~\ref{s:opt} we can give sufficient conditions ensuring that the solutions constructed with our wave-front tracking algorithm have this property, see Proposition~\ref{p:TheWineryDogs}.
\end{remark}

Let $[w^--1,w^+]\ni w\mapsto\hat{\mathtt u}(w,F) = (\hat{\mathtt r}(w,F),\hat{\mathtt v}(w,F)) \in \Omega_{\rm c}$ and $[0,V]\ni v\mapsto\check{\mathtt u}(v,F) = (\check{\mathtt r}(v,F),\check{\mathtt v}(v,F))  \in \Omega$ be defined in the $(v,w)$-coordinates by, see \figurename s~\ref{f:uhatucheck1} and~\ref{f:uhatucheck2},
\begin{subequations}\label{e:hat-check}
\begin{align}
\hat{\mathtt v}(w,F) &\doteq 
\begin{cases}
\Xi_F^{-1}(w)&\text{if }w>\max\{w^-,w_F\},
\\
v_F^+&\text{if }w_F<w\leq w^-,
\\
V&\text{if }w\leq w_F,
\end{cases}&
\hat{\mathtt w}(w,F) &\doteq 
\begin{cases}
w&\text{if }w>\max\{w^-,w_F\},
\\
w^-&\text{if }w_F<w\leq w^-,
\\
w_F&\text{if }w\leq w_F,
\end{cases}
\\
\check{\mathtt v}(v,F) &\doteq 
\begin{cases}
V&\hbox{if }v> v_F^+,\\
v&\hbox{if }v \in [v_F^-,v_F^+],\\
v_F^-&\hbox{if }v< v_F^-,
\end{cases}&
\check{\mathtt w}(v,F) &\doteq 
\begin{cases}
w_F&\hbox{if }v> v_F^+,\\
\Xi_F(v)&\hbox{if }v \in [v_F^-,v_F^+],\\
w^+&\hbox{if }v< v_F^-,
\end{cases}
\end{align}
\end{subequations}
where $\hat{\mathtt w} \doteq \mathtt{w} \circ \hat{\mathtt u}$ and $\check{\mathtt w} \doteq \mathtt{w} \circ \check{\mathtt u}$.
\begin{remark}\label{r:hatcheck}
Notice that
\[
f\bigl(\hat{\mathtt u}(w,F)\bigr)=f\bigl(\check{\mathtt u}(v,F)\bigr)=F.
\]
Moreover, $w\mapsto\hat{\mathtt u}(w,F)$ and $v\mapsto\check{\mathtt u}(v,F)$ are continuous if and only if $F\geq f_{\rm c}^-$, and in this case they are Lipschitz continuous.
On the other hand, if $F<f_{\rm c}^-$, then $w\mapsto\hat{\mathtt u}(w,F)$ and $v\mapsto\check{\mathtt u}(v,F)$ are only left-continuous.
Moreover $\hat{\mathtt w}(w,F) \geq w$ and $\check{\mathtt v}(v,F) \geq v$.
At last, $w\mapsto\hat{\mathtt w}(w,F)$ and $v\mapsto \check{\mathtt v}(v,F)$ are non-decreasing, while $w\mapsto\hat{\mathtt v}(w,F)$ and $v\mapsto\check{\mathtt w}(v,F)$ are non-increasing.
\end{remark}

\begin{figure}[!ht]
\begin{flushright}
\begin{tikzpicture}[every node/.style={anchor=south west,inner sep=0pt},x=1mm, y=1mm]
\node at (4,4) {\includegraphics[height=50mm]{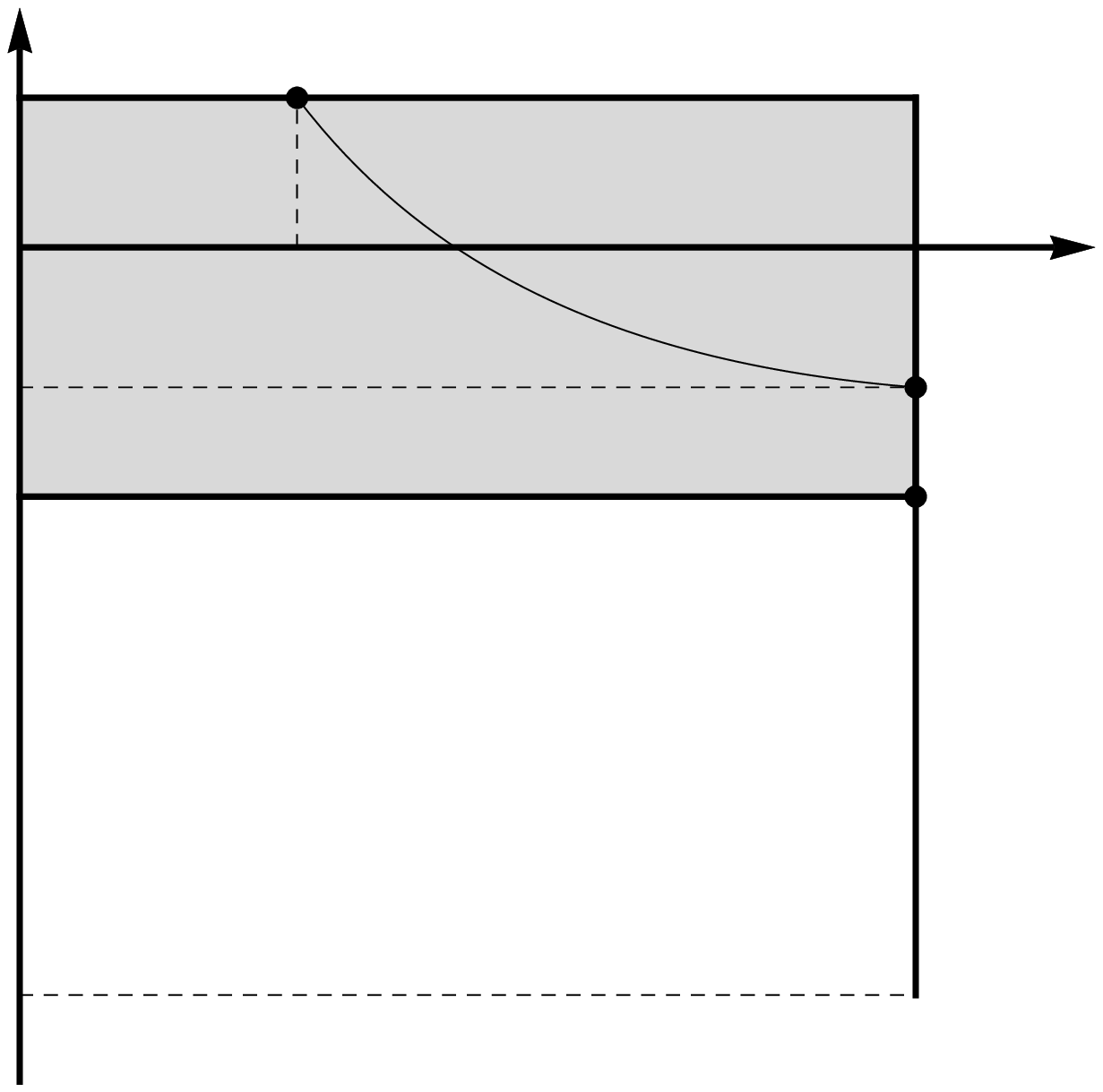}};
\node at (52,38) {$v$};
\node at (33,43) {$V = v_F^+$};
\node at (-1,30) {$w^-$};
\node at (-1,35) {$w_F$};
\node at (-7,8) {$w^--1$};
\node at (-1,48.5) {$w^+$};
\node at (0,52) {$w$};
\node at (15,38) {$v_F^-$};
\node at (52,-1) {\strut \color{white}{$\rho$}};
\end{tikzpicture}
\begin{tikzpicture}[every node/.style={anchor=south west,inner sep=0pt},x=1mm, y=1mm]
\node at (4,4) {\includegraphics[height=50mm]{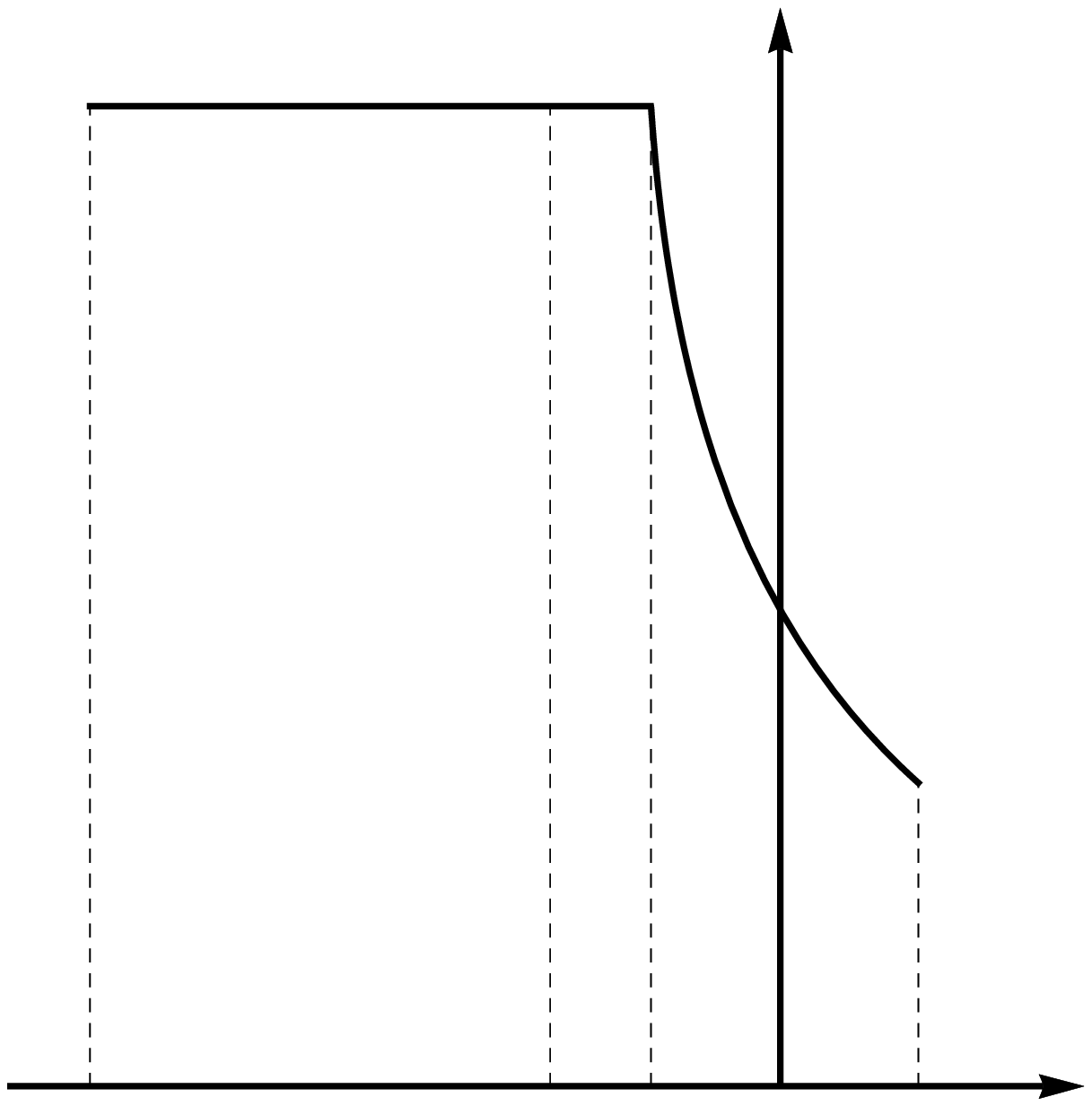}};
\node at (27,-1) {\strut $w^-$};
\node at (33,-1) {\strut $w_F$};
\node at (4,-1) {$\strut w^--1$};
\node at (44,-1) {\strut $w^+$};
\node at (35,52) {$\hat{\mathtt v}$};
\node at (48,16) {$v_F^-$};
\node at (20,50) {$V$};
\node at (52,-1) {\strut $w$};
\end{tikzpicture}
\begin{tikzpicture}[every node/.style={anchor=south west,inner sep=0pt},x=1mm, y=1mm]
\node at (4,4) {\includegraphics[height=50mm]{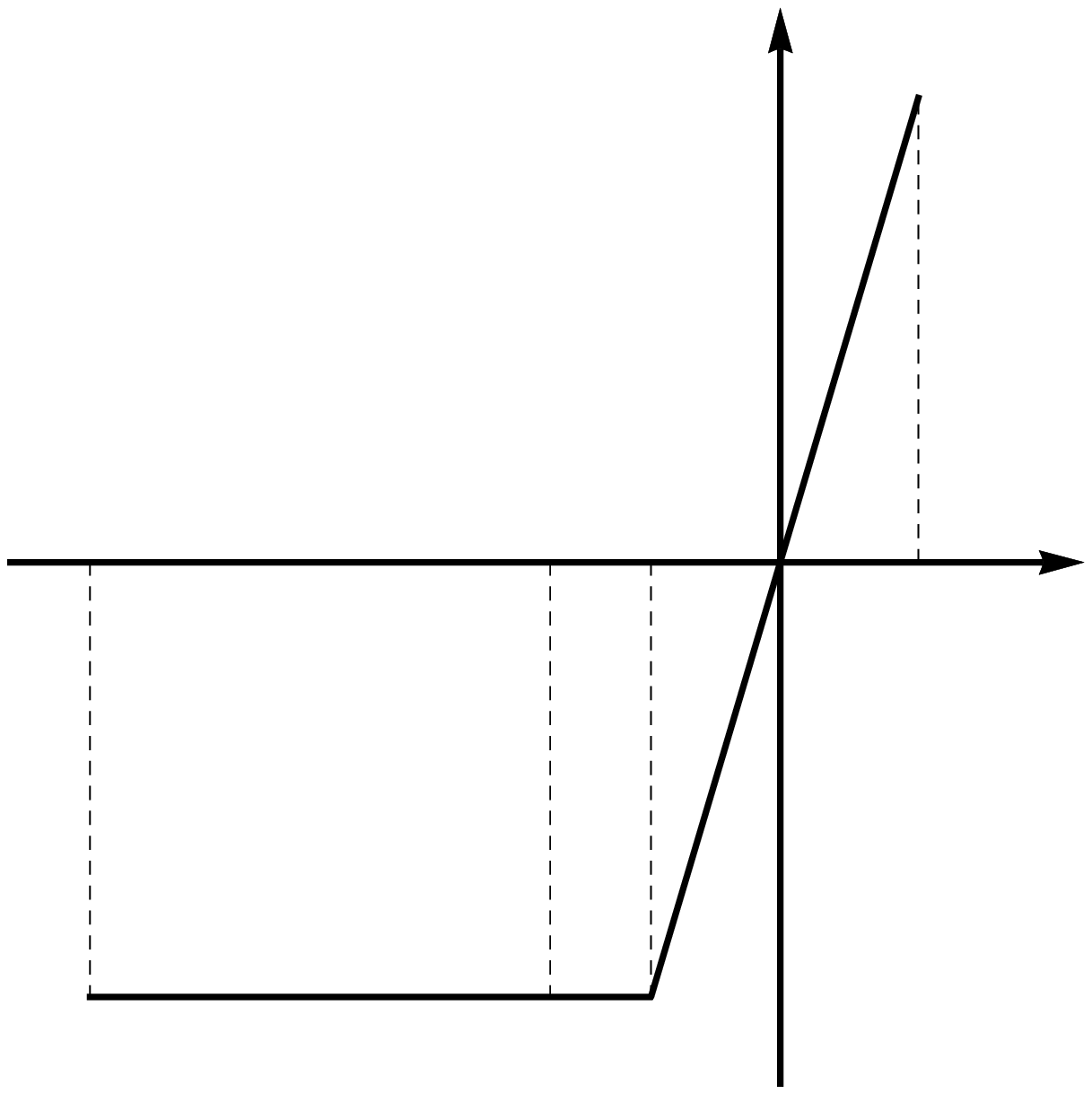}};
\node at (27,28.5) {\strut $w^-$};
\node at (33,28.5) {\strut $w_F$};
\node at (4,28.5) {$\strut w^--1$};
\node at (44,23) {\strut $w^+$};
\node at (35,52) {$\hat{\mathtt w}$};
\node at (52,23) {\strut $w$};
\node at (15,10) {$w_F$};
\node at (47,48) {$w^+$};
\node at (52,-1) {\strut \color{white}{$\rho$}};
\end{tikzpicture}
\\
\begin{tikzpicture}[every node/.style={anchor=south west,inner sep=0pt},x=1mm, y=1mm]
\node at (4,4) {\includegraphics[height=50mm]{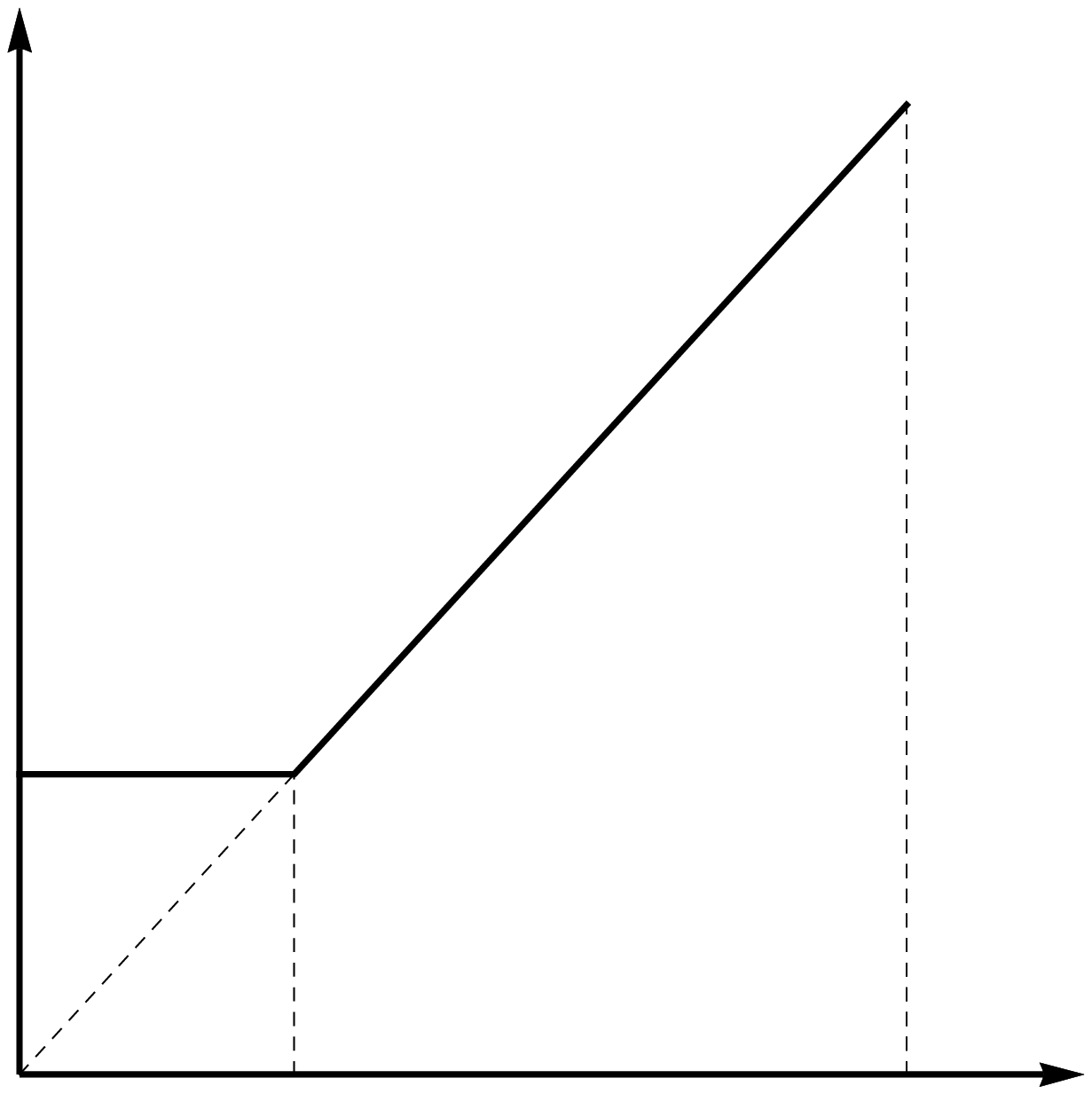}};
\node at (52,-1) {\strut $v$};
\node at (48,48) {$V$};
\node at (44,-1) {\strut $V$};
\node at (0,52) {$\check{\mathtt v}$};
\node at (8,19) {$v_F^-$};
\node at (15,-1) {\strut $v_F^-$};
\end{tikzpicture}
\begin{tikzpicture}[every node/.style={anchor=south west,inner sep=0pt},x=1mm, y=1mm]
\node at (4,4) {\includegraphics[height=50mm]{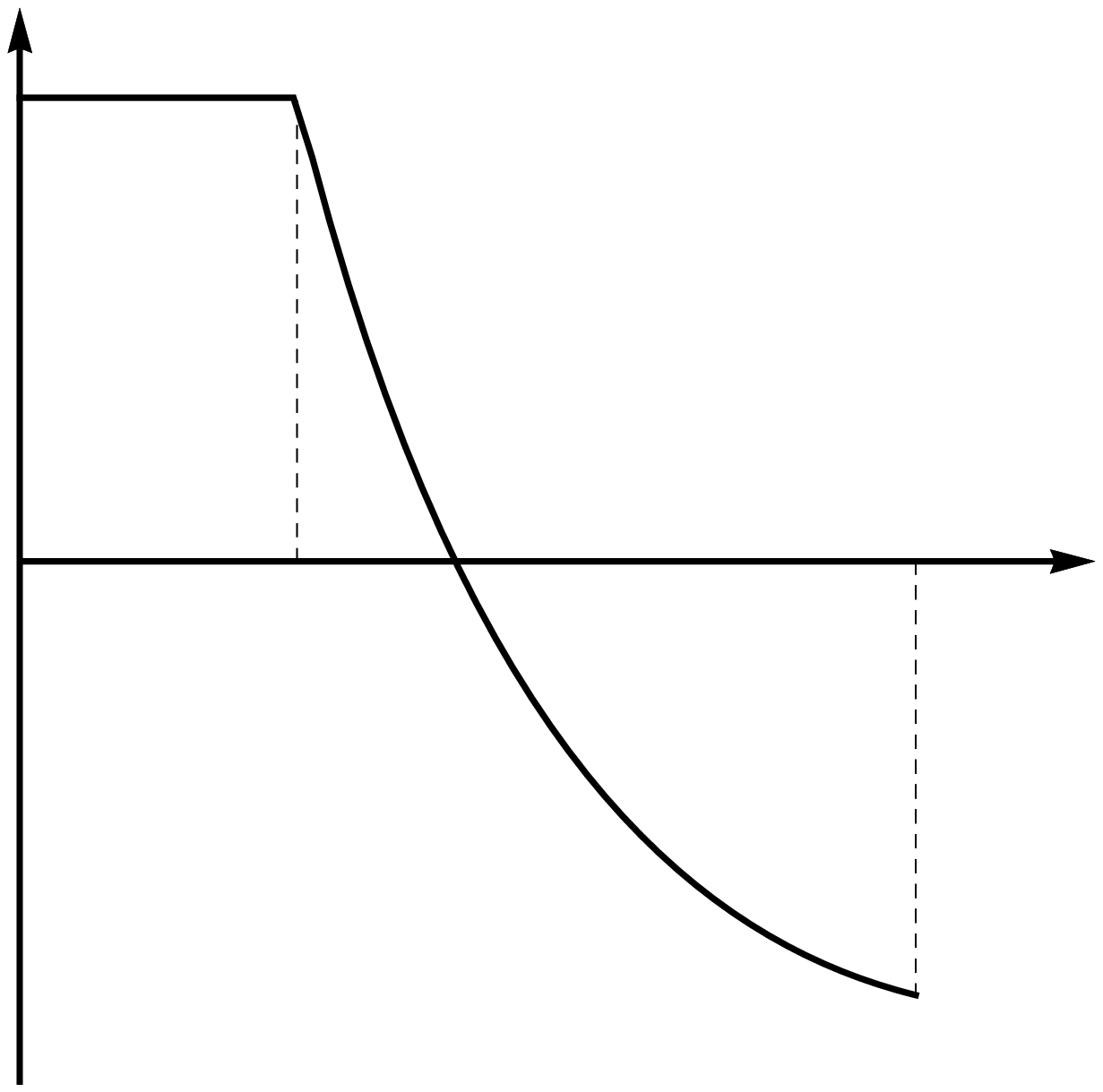}};
\node at (0,52) {$\check{\mathtt w}$};
\node at (46,7) {$w_F$};
\node at (10,50) {$w^+$};
\node at (52,-1) {\strut \color{white}{$\rho$}};
\node at (52,23) {\strut $v$};
\node at (15,23) {\strut $v_F^-$};
\end{tikzpicture}
\end{flushright}
\caption{Geometrical meaning of $\hat{\mathtt u}$ and $\check{\mathtt u}$ defined in \eqref{e:hat-check} in the case $F\in(f_{\rm c}^-,f_{\rm c}^+)$.
}
\label{f:uhatucheck1}
\end{figure}

\begin{figure}[!ht]
\begin{flushright}
\begin{tikzpicture}[every node/.style={anchor=south west,inner sep=0pt},x=1mm, y=1mm]
\node at (4,4) {\includegraphics[height=50mm]{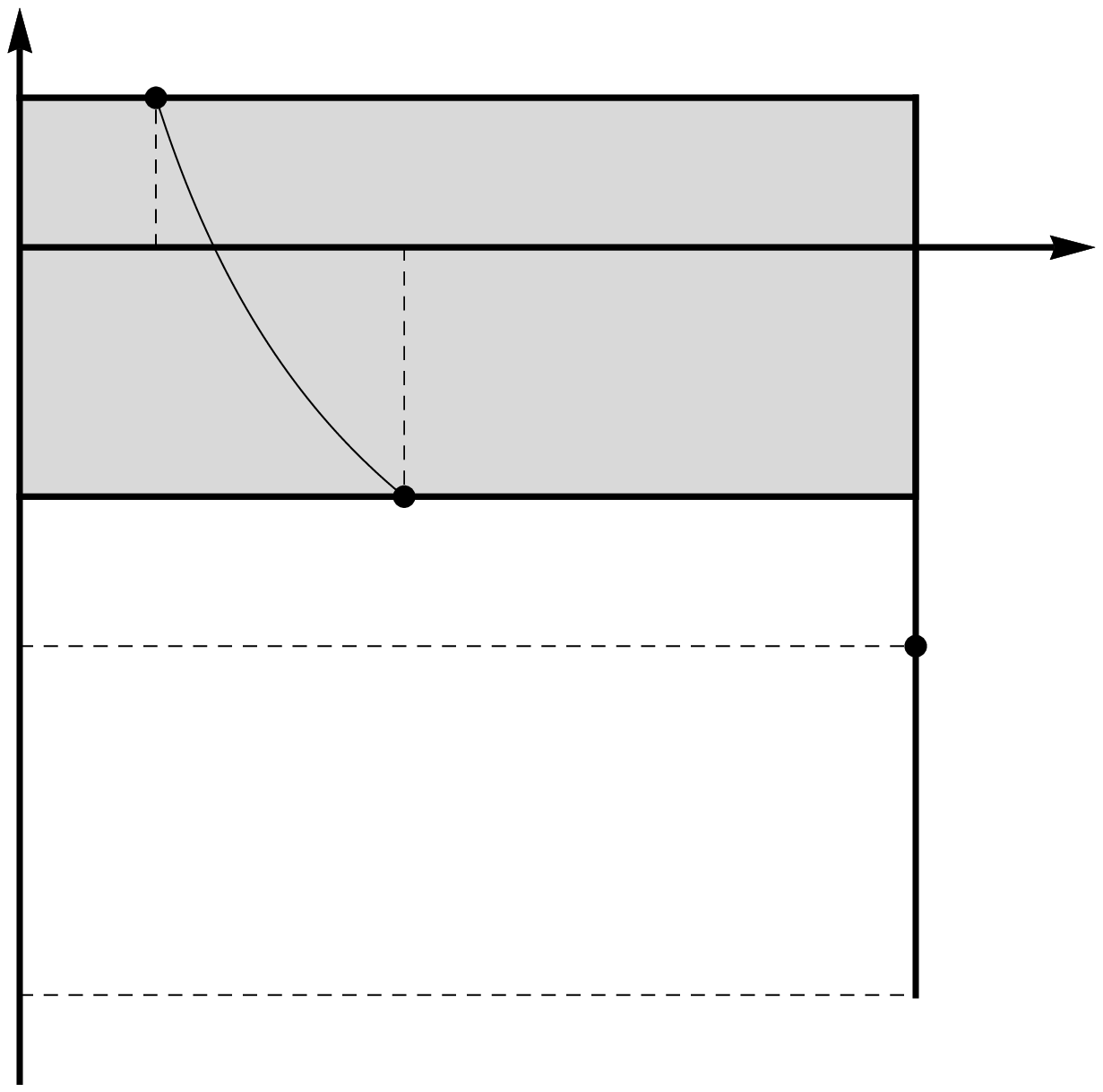}};
\node at (52,38) {$v$};
\node at (41,38) {$V$};
\node at (21,43) {$v_F^+$};
\node at (9,38) {$v_F^-$};
\node at (-1,30) {$w^-$};
\node at (-1,23) {$w_F$};
\node at (-7,8) {$w^--1$};
\node at (-1,48.5) {$w^+$};
\node at (0,52) {$w$};
\node at (52,-1) {\strut \color{white}{$\rho$}};
\end{tikzpicture}
\begin{tikzpicture}[every node/.style={anchor=south west,inner sep=0pt},x=1mm, y=1mm]
\node at (4,4) {\includegraphics[height=50mm]{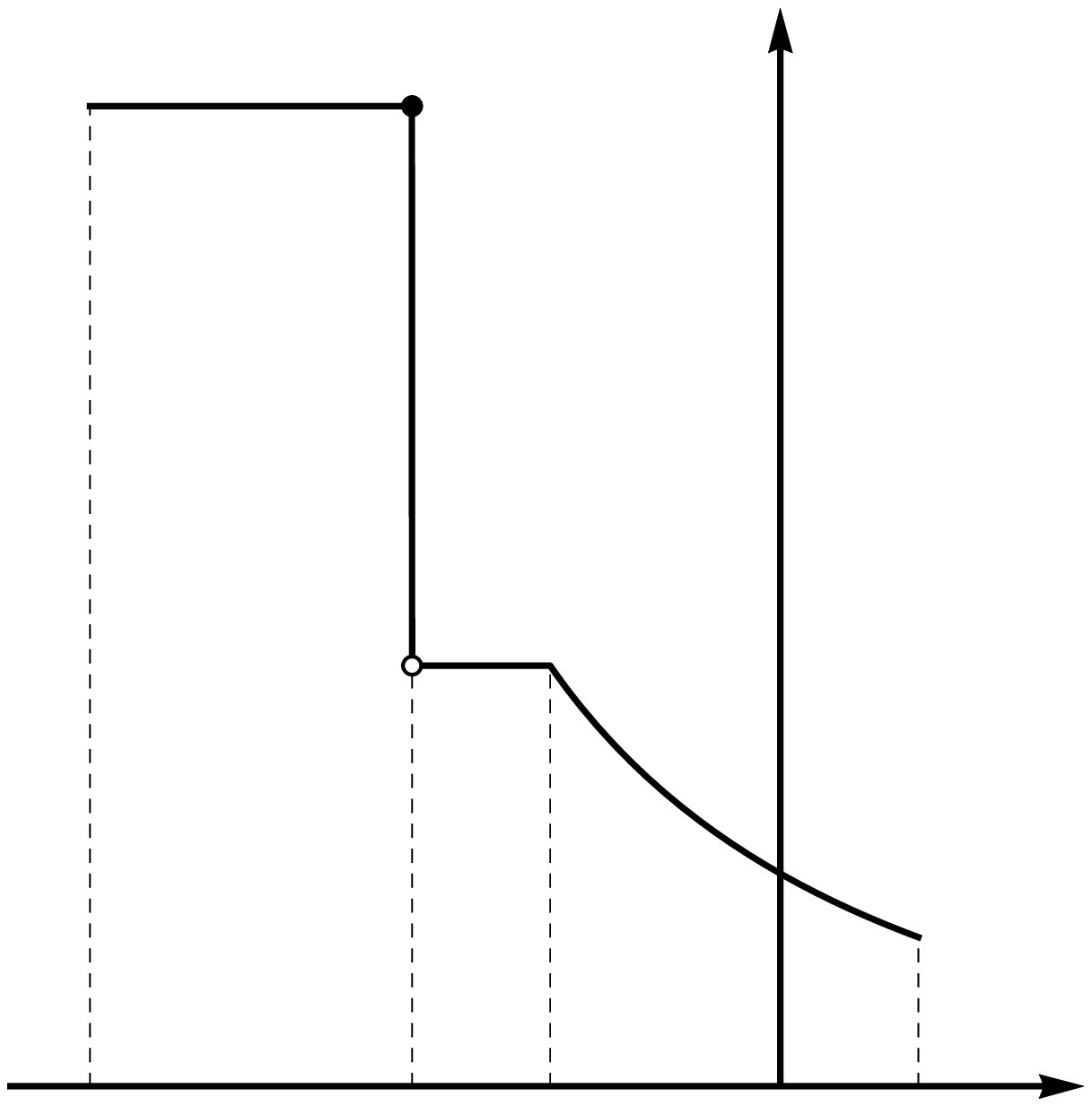}};
\node at (21,-1) {\strut $w_F$};
\node at (28,-1) {\strut $w^-$};
\node at (4,-1) {$\strut w^--1$};
\node at (44,-1) {\strut $w^+$};
\node at (52,-1) {\strut $w$};
\node at (35,52) {$\hat{\mathtt v}$};
\node at (48,10) {$v_F^-$};
\node at (25,24) {$v_F^+$};
\node at (15,50) {$V$};
\end{tikzpicture}
\begin{tikzpicture}[every node/.style={anchor=south west,inner sep=0pt},x=1mm, y=1mm]
\node at (4,4) {\includegraphics[height=50mm]{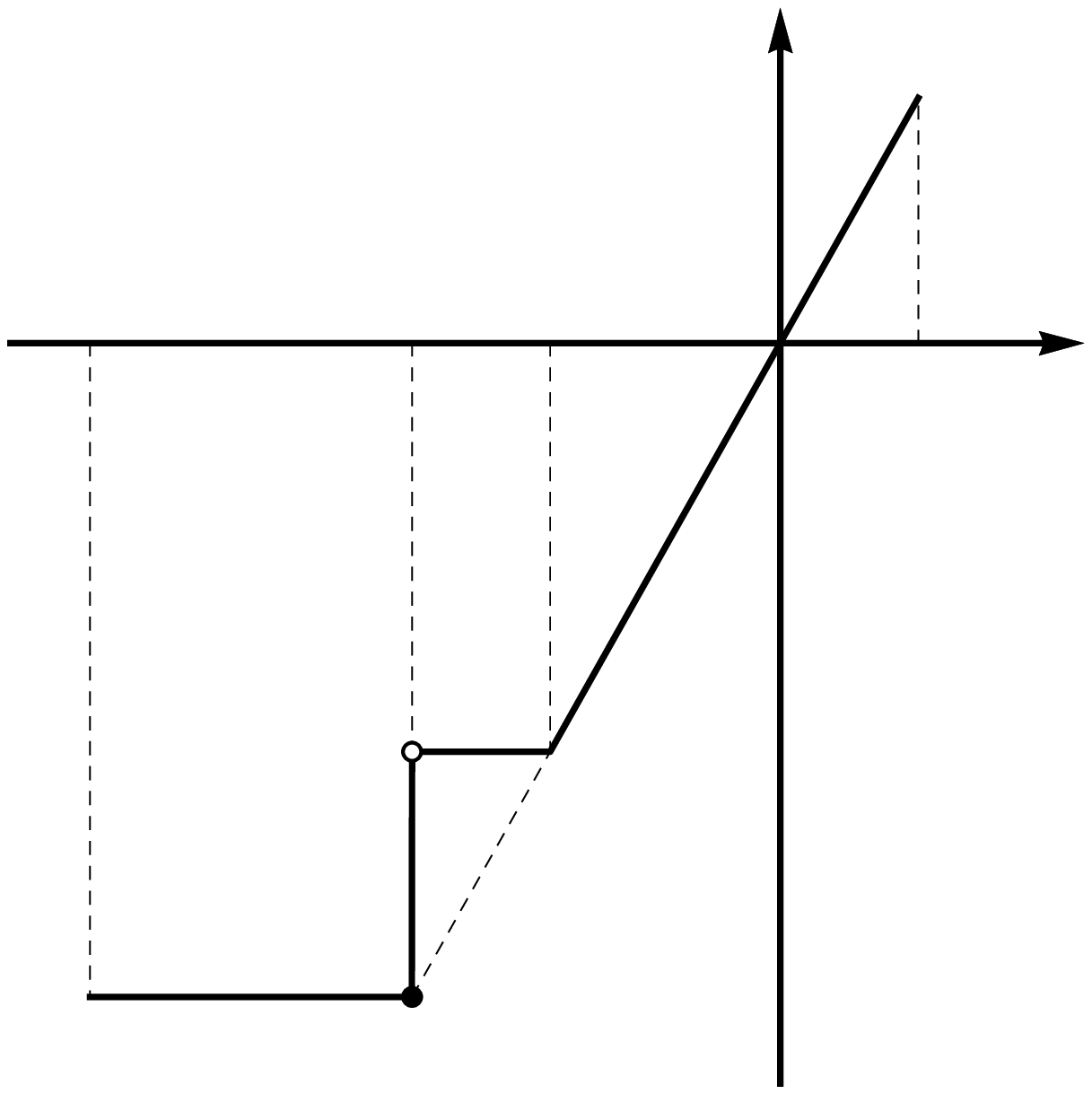}};
\node at (21,39) {\strut $w_F$};
\node at (28,39) {\strut $w^-$};
\node at  (4,39) {$\strut w^--1$};
\node at (44,32) {\strut $w^+$};
\node at (52,32) {\strut $w$};
\node at (35,52) {$\hat{\mathtt w}$};
\node at (13,10) {$w_F$};
\node at (24,21) {$w^-$};
\node at (48,50) {$w^+$};
\node at (52,-1) {\strut \color{white}{$\rho$}};
\end{tikzpicture}
\\
\begin{tikzpicture}[every node/.style={anchor=south west,inner sep=0pt},x=1mm, y=1mm]
\node at (4,4) {\includegraphics[height=50mm]{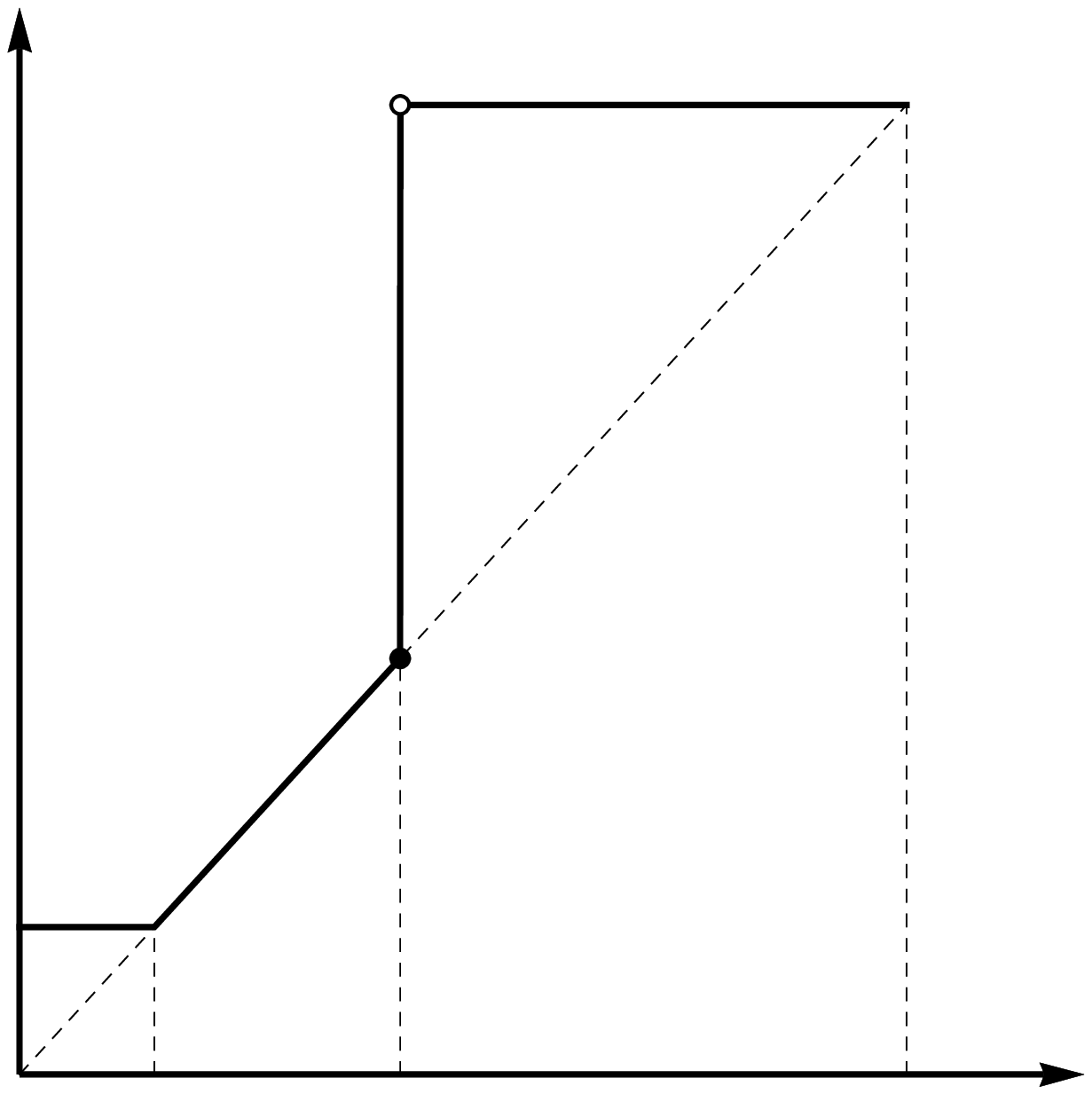}};
\node at (52,-1) {\strut $v$};
\node at (41,-1) {\strut $V$};
\node at (21,-1) {\strut $v_F^+$};
\node at  (9,-1) {\strut $v_F^-$};
\node at (0,52) {$\check{\mathtt v}$};
\node at (30,50) {$V$};
\node at (6.5,13) {$v_F^-$};
\node at (23.5,20) {$v_F^+$};
\end{tikzpicture}
\begin{tikzpicture}[every node/.style={anchor=south west,inner sep=0pt},x=1mm, y=1mm]
\node at (4,4) {\includegraphics[height=50mm]{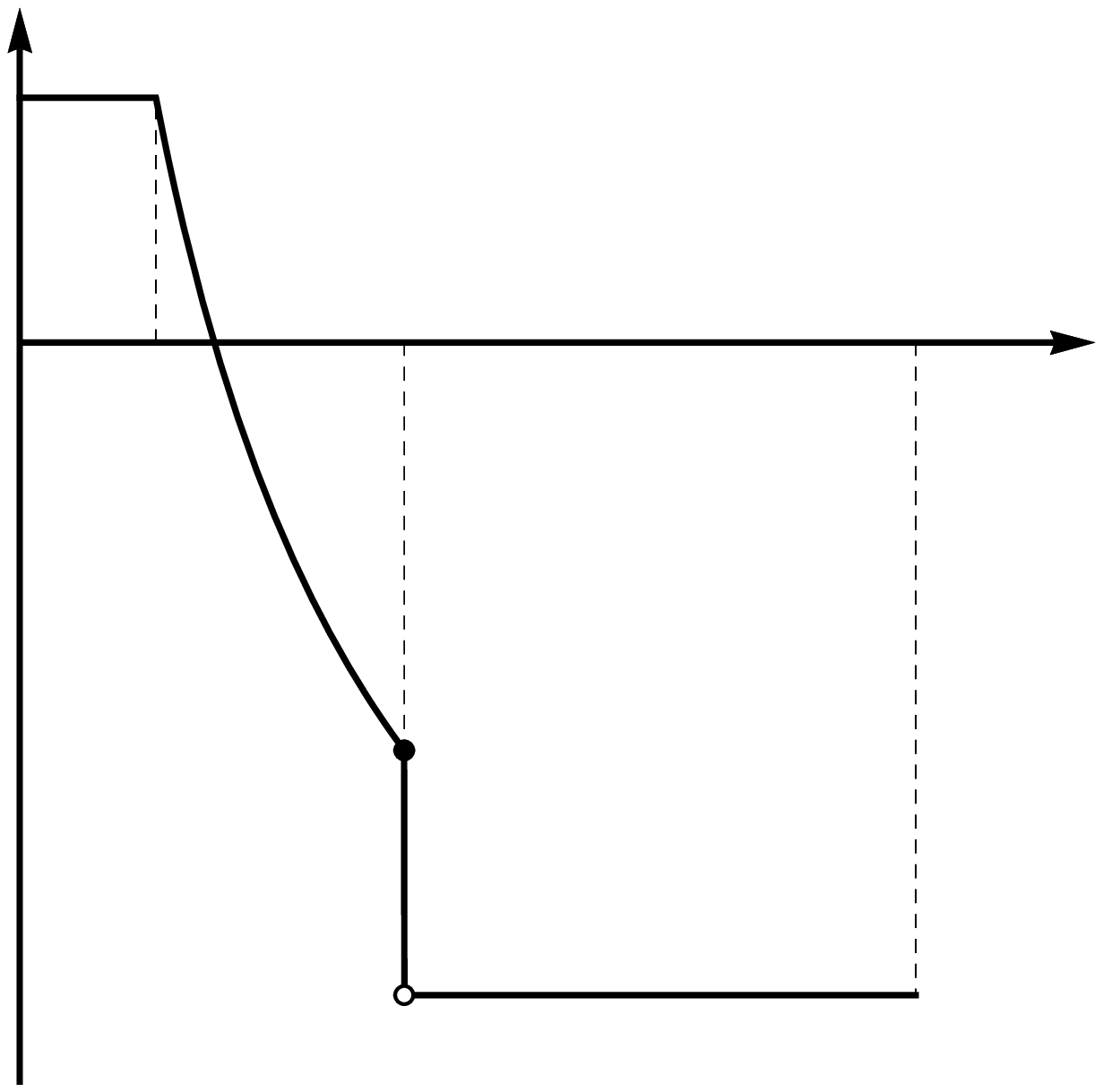}};
\node at (52,33) {\strut $v$};
\node at (44,39) {\strut $V$};
\node at (20,39) {\strut $v_F^+$};
\node at  (9,33) {\strut $v_F^-$};
\node at (0,52) {$\check{\mathtt w}$};
\node at (31,10) {$w_F$};
\node at (7,50) {$w^+$};
\node at (24,19) {$w^-$};
\node at (52,-1) {\strut \color{white}{$\rho$}};
\end{tikzpicture}
\end{flushright}
\caption{Geometrical meaning of $\hat{\mathtt u}$ and $\check{\mathtt u}$ defined in \eqref{e:hat-check} in the case $F\in(0,f_{\rm c}^-)$.
}
\label{f:uhatucheck2}
\end{figure}

Denote by $\tv_+$ and $\tv_-$ the positive and negative total variations, respectively.
For any $u\colon \R\to\Omega$ let
\begin{subequations}\label{e:upsilons}
\begin{align}
\hat{\Upsilon}(u)
&\doteq
\tv_+\Bigl(\hat{\mathtt v}\bigl(\mathtt{w}(u),F\bigr);(-\infty,0)\Bigr)
+\tv_-\Bigl(\hat{\mathtt w}\bigl(\mathtt{w}(u),F\bigr);(-\infty,0)\Bigr),
\\
\check{\Upsilon}(u)
&\doteq
\tv_+\Bigl(\check{\mathtt v}(v,F);(0,\infty)\Bigr)
+\tv_-\Bigl(\check{\mathtt w}(v,F);(0,\infty)\Bigr).
\end{align}
\end{subequations}
For any $u\in\Omega$ and $k\in[0,V]$ let
\begin{align*}
\mathtt{N}^k_F(u) &\doteq \begin{cases}
f(u) \left[\dfrac{k}{F}-\dfrac{1}{p^{-1}\bigl(\mathtt{W}(u)-k\bigr)}\right]_+ & \hbox{if } F \neq 0,\\
k & \hbox{if } F = 0,
\end{cases}&
[w]_+ &\doteq
\begin{cases}
w&\text{if }w>0,
\\
0&\text{otherwise}.
\end{cases}
\end{align*}
We are now in the position to state the main result of the paper. 
\begin{theorem}\label{t:mainF}
Let $u^o \in \L1\cap\BV(\R;\Omega)$ and $F\in [0,f_{\rm c}^+]$ satisfy one of the following conditions:
\begin{enumerate}[label={\textbf{(H.\arabic*)}},leftmargin=*]\setlength{\itemsep}{0cm}%
\item\label{H.1} $F\in[f_{\rm c}^-,f_{\rm c}^+]$;
\item\label{H.2} $F\in[0,f_{\rm c}^-)$ and $\hat{\Upsilon}(u^o) + \check{\Upsilon}(u^o)$ is bounded.
\end{enumerate}
Then the approximate solutions $u_n$ constructed in Section~\ref{s:approxsolR} converge to a solution $u \in \C0(\R_+;\BV(\R;\Omega))$ of constrained Cauchy problem \eqref{eq:system}, \eqref{eq:initdat}, \eqref{eq:const} in the sense of Definition~\ref{def:solconstmod}.
Moreover for all $t$, $s\in\R_+$ the following estimates hold
\begin{align}\label{e:estimates}
\tv\bigl(u(t)\bigr) &\leq C_F^o,&
\|u(t)-u(s)\|_{\L1(\R;\Omega)} &\leq L_F^o\, |t-s|,&
\|u(t)\|_{\L\infty(\R;\Omega)} &\leq R+V,&
\end{align}
where $C^o_F$ and $L^o_F$ are constants that depend on $u^o$ and $F$.
Furthermore,  non-classical discontinuities of $u$ can occur only at the constraint location $x=0$, and if for any $k \in [0,V]$ and $\phi \in \Cc\infty((0,\infty)\times \R;\R)$ such that $\phi \geq0$ we have
\begin{equation}\label{e:Opeth}
\lim_{n\to\infty} \int_0^T \mathtt{N}^k_F\bigl(u_n(t,0_-)\bigr) \, \phi(t,0) \, \d t = 
\int_0^T \mathtt{N}^k_F\bigl(u(t,0_-)\bigr) \, \phi(t,0) \, \d t,
\end{equation}
then the (density) flow at $x=0$ is the maximal flow $F$ allowed by the constraint.
\end{theorem}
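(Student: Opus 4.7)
The plan is to deduce Theorem~\ref{t:mainF} from three ingredients: uniform $\BV$, $\L\infty$ and time-Lipschitz estimates on the wave-front tracking approximations $u_n$; Helly's compactness theorem to extract a limit $u$; and verification that $u$ satisfies each clause of Definition~\ref{def:solconstmod}. The $\L\infty$ bound $\|u_n(t)\|_{\L\infty}\le R+V$ is immediate from the invariance of $\Omega$ under both $\mathcal R$ and $\mathcal R_F$, and once the $\BV$ bound is in hand the time-Lipschitz estimate follows from finite speed of propagation. So the whole argument hinges on the $\BV$ bound.

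The main work is thus to exhibit a Glimm-type functional $\Upsilon_n(t)$ controlling $\tv(u_n(t))$ from above which is non-increasing across every interaction. Away from $x=0$ the interactions are resolved by $\mathcal R$, for which the standard functional (first-family strength plus a suitably weighted second-family strength) is already available from \cite{BenyahiaRosini01}. At $x=0$ the solver $\mathcal R_F$ may emit non-classical discontinuities, and the standard functional alone need not decrease. To absorb their effect I add to the standard functional the quantity $\hat\Upsilon(u_n(t))+\check\Upsilon(u_n(t))$ defined in \eqref{e:upsilons}: since by Remark~\ref{r:hatcheck} the maps $\hat{\mathtt w}(\cdot,F),\check{\mathtt v}(\cdot,F)$ are monotone non-decreasing and $\hat{\mathtt v}(\cdot,F),\check{\mathtt w}(\cdot,F)$ are monotone non-increasing, their positive (resp.~negative) variations strictly decrease whenever a wave is absorbed at the constraint. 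Under \ref{H.1} Remark~\ref{r:hatcheck} also gives Lipschitz continuity of $\hat{\mathtt u},\check{\mathtt u}$, so $\hat\Upsilon(u^o)+\check\Upsilon(u^o)\le C\,\tv(u^o)$ automatically; under \ref{H.2} these maps are only left-continuous, which is precisely why the quantity must be assumed bounded in the hypothesis. I expect the main obstacle to be the finite but delicate case analysis needed to check monotonicity of $\Upsilon_n$ at interactions in which a wave reaches $x=0$ while a non-classical discontinuity is already seated there; the jumps of $\hat{\mathtt u},\check{\mathtt u}$ at $w_F$, $w^-$ and $v_F^\pm$ make the bookkeeping heavier in the regime \ref{H.2}.

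Once $\tv(u_n(t))\le C_F^o$ is established, Helly's theorem yields, along a subsequence, $u_n\to u$ in $\C0(\R_+;\Lloc1(\R;\Omega))$ with $u\in\C0(\R_+;\BV(\R;\Omega))$, and \eqref{e:estimates} passes to the limit. Clause~\ref{CS1} follows from the convergence of the discretised initial data. Clauses~\ref{CS2} and~\ref{CS3} are obtained by passing to the limit in the analogous conditions satisfied by $u_n$; crucially, the requirement $\phi(\cdot,0)\equiv0$ in \eqref{e:AdrianBelew1} and \eqref{e:Deftones} kills the boundary contributions at $x=0$, which is essential because the extended linearised momentum and the entropies may be dissipated across non-classical shocks at $x=0$. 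The unilateral constraint~\ref{CS4} is preserved in the limit because $f(u_n(t,0_\pm))\le F$ holds by construction of $\mathcal R_F$ and the flux trace converges in $\L1_{\mathrm{loc}}(0,\infty)$. The fact that non-classical discontinuities of $u$ can occur only at $x=0$ is then inherited from $u_n$, since $\mathcal R$ produces only Lax-admissible jumps.

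For the last statement about the flux being maximal, I apply the entropy inequality to $u_n$ separately on the slabs $\{x<0\}$ and $\{x>0\}$, now allowing test functions $\phi\ge0$ with non-trivial trace on $\{x=0\}$. Integration by parts produces a boundary term at $x=0$ which, in view of the explicit form of $\mathtt E^k,\mathtt Q^k$ and of the Rankine-Hugoniot jump relations for the non-classical shocks generated by $\mathcal R_F$, can be identified with $\int_0^T \mathtt N^k_F\bigl(u_n(t,0_-)\bigr)\,\phi(t,0)\,\d t$ up to terms that already pass to the limit by $\L1_{\mathrm{loc}}$ convergence. Assumption \eqref{e:Opeth} allows the passage to the limit in the boundary term as well and yields the corresponding inequality for $u$. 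Choosing $k$ near $V$ and $\phi$ localized near $x=0$ then forces the non-classical trace to satisfy $f(u(t,0_-))=F$ wherever it differs from a Lax shock, which is the desired maximality of the flow at the constraint.
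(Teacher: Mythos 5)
Your overall strategy coincides with the paper's: the same Temple-type functional $\tv(v_n)+\tv(\mathtt{w}_n)+2\hat{\Upsilon}_n+2\check{\Upsilon}_n$ in the Riemann-invariant coordinates, Helly compactness, clause-by-clause verification of Definition~\ref{def:solconstmod}, and a boundary-compensated entropy inequality for the maximality of the flux. There is, however, a genuine gap in your treatment of \ref{CS3}. You claim the entropy inequality \eqref{e:Deftones} is ``obtained by passing to the limit in the analogous conditions satisfied by $u_n$'', but the approximate solutions do \emph{not} satisfy those conditions: every rarefaction is replaced by a fan of rarefaction shocks, each of which violates the entropy inequality. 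For the nonsmooth entropies $\mathtt{E}^k$ the violation across a single rarefaction shock is of \emph{first} order in its strength (the paper's lower bound is $-\tfrac{2}{\rho^-}\,p^{-1}(\mathtt{w}_\pm)\,p'(p^{-1}(\mathtt{w}_\pm))\,[\rho_n(t,x_-)-\rho_n(t,x_+)]$), so summing over all fronts only yields a bound of order $\tv(u_n)$, which does not vanish as $n\to\infty$; the usual quadratic estimate for approximate rarefaction fronts is not available here. The paper closes this step by exploiting that, for a given rarefaction shock, the violation is nonzero only when $k$ lies in its velocity interval, whose length is at most the mesh size $\mathcal{E}_n$: summing over the finite set $\mathcal{K}_h$ of values of $k$ spaced more than $\mathcal{E}_n$ apart, each front contributes to at most one $k$, which forces the liminf in \eqref{e:PorcupineTree} to be at least $-1/h$ for a dense set of $k$, and then continuity in $k$ and the arbitrariness of $h$ conclude. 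Without an argument of this kind your verification of \ref{CS3} does not close.

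Two smaller points. First, a non-increasing functional is not by itself enough to define $u_n$ globally: one must also show that only finitely many interactions occur in finite time. The paper needs Proposition~\ref{p:inter}, showing that $2^n\,\mathcal{T}_n/\varepsilon_n+\sharp$ strictly decreases at every interaction except those listed in \eqref{e:Korn}, which are then bounded by a separate combinatorial argument on contact discontinuities; your sketch omits this. Second, in the final step the prescription ``choose $k$ near $V$'' is not correct in general: if the right trace $u_r$ of the non-classical shock has $v_r<V$, then for $k$ near $V$ one has $\mathtt{Q}^k(u_r)\neq 0$ and the cancellation fails. The admissible choices are $k\in(\hat{\mathtt v}(\mathtt{w}_\ell,F),\check{\mathtt v}(v_r,F))$, for which $\mathtt{Q}^k(u_r)=0$ and the bracket in $\mathtt{N}^k_F$ is active, yielding the identity $[f/F-1]\,k\ge 0$ and hence $f=F$.
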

\noindent
As in~\cite{BCM2order, BenyahiaRosini01, ColomboGoatinPriuli}, the proof of the above theorem is based on the wave-front tracking algorithm, see~\cite{bressanbook, HoldenRisebroWFT} and the references therein.
The details of the proof are deferred to Section~\ref{s:mainF}.

\begin{remark}
If $F  \in [f_{\rm c}^-,f_{\rm c}^+]$, then $w\mapsto\hat{\mathtt u}(w,F)$ and $v\mapsto\check{\mathtt u}(v,F)$ are Lipschitz continuous and therefore $\hat{\Upsilon}(u^o) + \hat{\Upsilon}(u^o)$ is obviously bounded if $u^o$ has bounded total variation.
\end{remark}

\subsection{The constrained Riemann problem}\label{s:RiemannSs}

For completeness, we conclude this section by giving the definitions of the Riemann solvers $\mathcal{R}$ and $\mathcal{R}_F$ introduced in~\cite{BenyahiaRosini01} and~\cite{edda-nikodem-mohamed}, associated to Riemann problem \eqref{eq:system}, \eqref{eq:Rdata} and to constrained Riemann problem \eqref{eq:system}, \eqref{eq:const}, \eqref{eq:Rdata}, respectively, and used in Section~\ref{s:mainF} to prove Theorem~\ref{t:mainF}.

We recall that Riemann problems for \eqref{eq:system} are Cauchy problems with initial condition of the form
\begin{equation}\label{eq:Rdata}
u(0,x)=\begin{cases}
u_\ell &\hbox{if }x<0,\\ 
u_r &\hbox{if }x>0.
\end{cases}
\end{equation}

\begin{definition}
The Riemann solver $\mathcal{R} \colon \Omega^2 \to {\mathbf{L^{\infty}}}(\mathbb{R};\Omega)$  associated to Riemann problem \eqref{eq:system}, \eqref{eq:Rdata} is defined as follows.

\begin{enumerate}[label={(R.\arabic*)},leftmargin=*]\setlength{\itemsep}{0cm}%

\item If $u_\ell,u_r\in \Omega_{\rm f}$, then $\mathcal{R}[u_\ell,u_r]$ consists of a contact discontinuity $(u_\ell,u_r)$ with speed of propagation $V$.

\item If $u_\ell,u_r\in\Omega_{\rm c}$, then $\mathcal{R}[u_\ell,u_r]$ consists of a $1$-wave $(u_\ell,{\mathtt u}_*(u_\ell,u_r))$ and of a $2$-contact discontinuity $({\mathtt u}_*(u_\ell,u_r),u_r)$.

\item If $u_\ell\in\Omega_{\rm c}^-$ and $u_r\in\Omega_{\rm f}^-$, then $\mathcal{R}[u_\ell,u_r]$ consists of a $1$-rarefaction $(u_\ell,\omega(u_\ell))$ and a contact discontinuity $(\omega(u_\ell),u_r)$.

\item If $u_\ell\in\Omega_{\rm f}^-$ and $u_r\in \Omega_{\rm c}^-$, then $\mathcal{R}[u_\ell,u_r]$ consists of a phase transition $(u_\ell,\mathtt{v}^-(u_r))$ and a $2$-contact discontinuity $(\mathtt{v}^-(u_r),u_r)$.

\end{enumerate}
\end{definition}

Since $(t,x)\mapsto\mathcal{R}[u_\ell,u_r](x/t)$ does not in general satisfy constraint condition \eqref{eq:const}, we introduce
\begin{align*}
\mathcal{D}_1 \doteq \hphantom{\cup}&\,
\bigl\{ (u_\ell,u_r) \in \Omega\times\Omega : f\bigl(\mathcal{R}[u_\ell,u_r](t,0_\pm)\bigr) \le F \bigr\}
\\=\hphantom{\cup}
&\,\bigl\{ (u_\ell , u_r) \in \Omega_{\rm f}\times\Omega_{\rm f} : f(u_\ell) \le F \bigr\}
\\
\cup&
\,\bigl\{ (u_\ell , u_r) \in \Omega_{\rm c} \times \Omega : f\bigl({\mathtt u}_*(u_\ell,u_r)\bigr) \le F \bigr\}
\\
\cup&
\,\bigl\{ (u_\ell , u_r) \in \Omega_{\rm f}^- \times \Omega_{\rm c}^- : \min\bigl\{f(u_\ell),f\bigl(\mathtt{v}^-(u_r)\bigr)\bigr\} \le F \bigr\},
\end{align*}
$\mathcal{D}_2 \doteq \Omega^2 \setminus \mathcal{D}_1$ and the constrained Riemann solver $\mathcal{R}_F$ in the following

\begin{definition}\label{def:01}
The constrained Riemann solver $\mathcal{R}_F \colon \Omega^2 \to {\mathbf{L^{\infty}}}(\mathbb{R};\Omega)$ associated to constrained Riemann problem \eqref{eq:system}, \eqref{eq:const}, \eqref{eq:Rdata} is defined as
\[
\mathcal{R}_F[u_\ell,u_r](x)\doteq \begin{cases}
\mathcal{R}[u_\ell,u_r](x) &\hbox{if }(u_\ell,u_r) \in \mathcal{D}_1,\\[5pt]
\begin{cases}
\mathcal{R}[u_\ell,\hat{\mathtt u}_\ell](x) &\hbox{if }x<0,\\
\mathcal{R}[\check{\mathtt u}_r,u_r](x) &\hbox{if }x>0,
\end{cases}
&\hbox{if }(u_\ell,u_r) \in \mathcal{D}_2,
\end{cases}
\]
where $\hat{\mathtt u}_\ell \doteq \hat{\mathtt u}(\mathtt{w}(u_\ell),F) \in \Omega_{\rm c}$ and $\check{\mathtt u}_r \doteq \check{\mathtt u}(v_r,F)  \in \Omega$ are defined by \eqref{e:hat-check}.
\end{definition}
\noindent
In \figurename~\ref{f:uhatucheck0} we clarify the selection criterion \eqref{e:hat-check} for $\hat{\mathtt u}_\ell$ and $\check{\mathtt u}_r$.
\begin{figure}[!ht]
\begin{center}
\begin{tikzpicture}[every node/.style={anchor=south west,inner sep=0pt},x=1mm, y=1mm]
\node at (4,4) {\includegraphics[height=50mm]{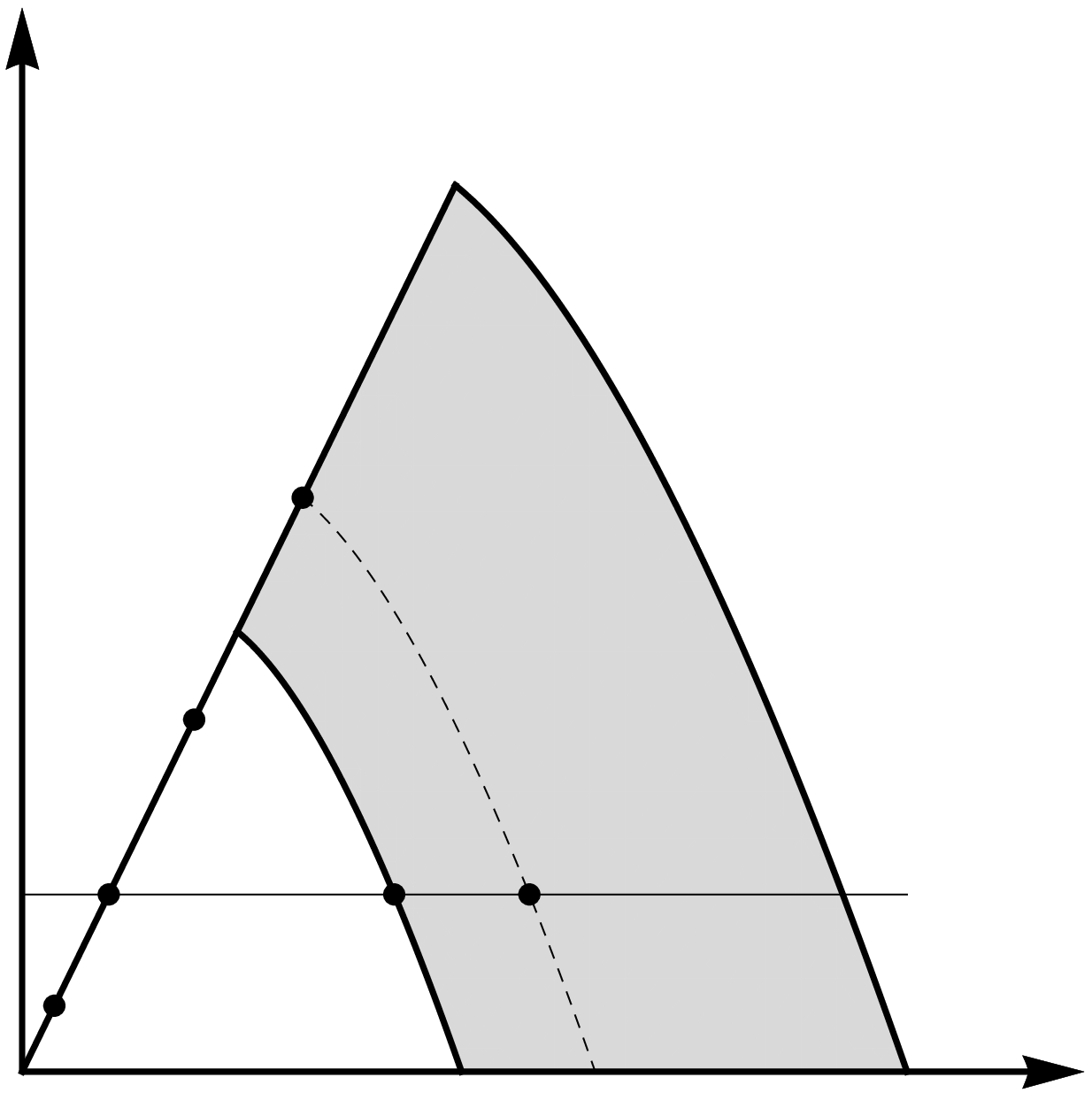}};
\node at (52,0) {$\rho$};
\node at (0,50) {$f$};
\node at (0,12) {$F$};
\node at (6,14.5) {$\check{\mathtt u}_r$};
\node at (18,8) {$\hat{\mathtt u}_\ell^1$};
\node at (29,13.5) {$\hat{\mathtt u}_\ell^2$};
\node at (8,20) {$u_\ell^1$};
\node at (12.5,30) {$u_\ell^2$};
\node at (8,6) {$u_r$};
\end{tikzpicture}\qquad
\begin{tikzpicture}[every node/.style={anchor=south west,inner sep=0pt},x=1mm, y=1mm]
\node at (4,4) {\includegraphics[height=50mm]{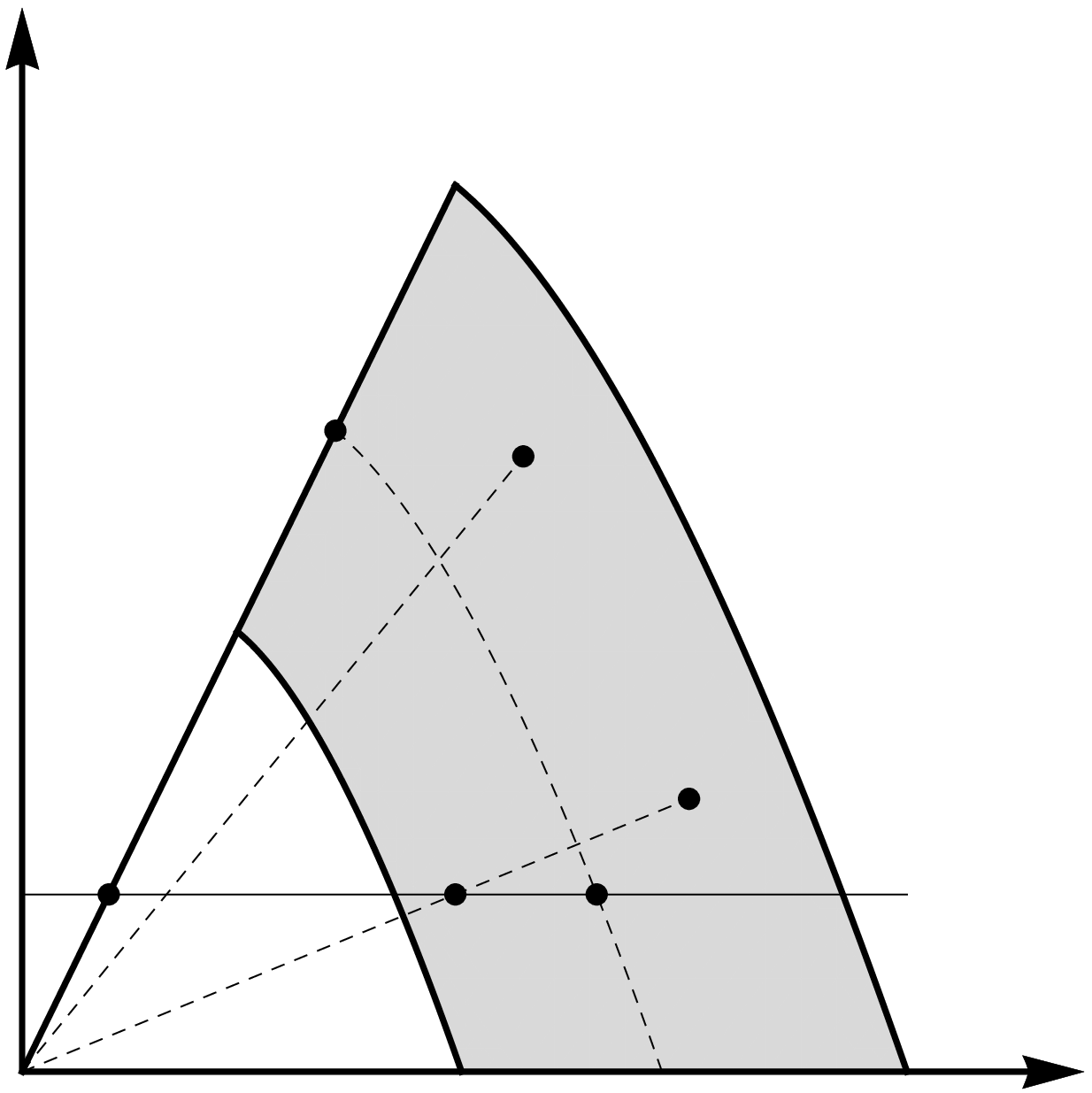}};
\node at (52,0) {$\rho$};
\node at (0,50) {$f$};
\node at (0,12) {$F$};
\node at (6,14.5) {$\check{\mathtt u}_r^2$};
\node at (25,8.5) {\strut $\check{\mathtt u}_r^1$};
\node at (32,8.5) {\strut $\hat{\mathtt u}_\ell$};
\node at (14.5,34) {$u_\ell$};
\node at (35,18) {$u_r^1$};
\node at (27,33.5) {$u_r^2$};
\end{tikzpicture}
\\[10pt]
\begin{tikzpicture}[every node/.style={anchor=south west,inner sep=0pt},x=1mm, y=1mm]
\node at (4,4) {\includegraphics[height=50mm]{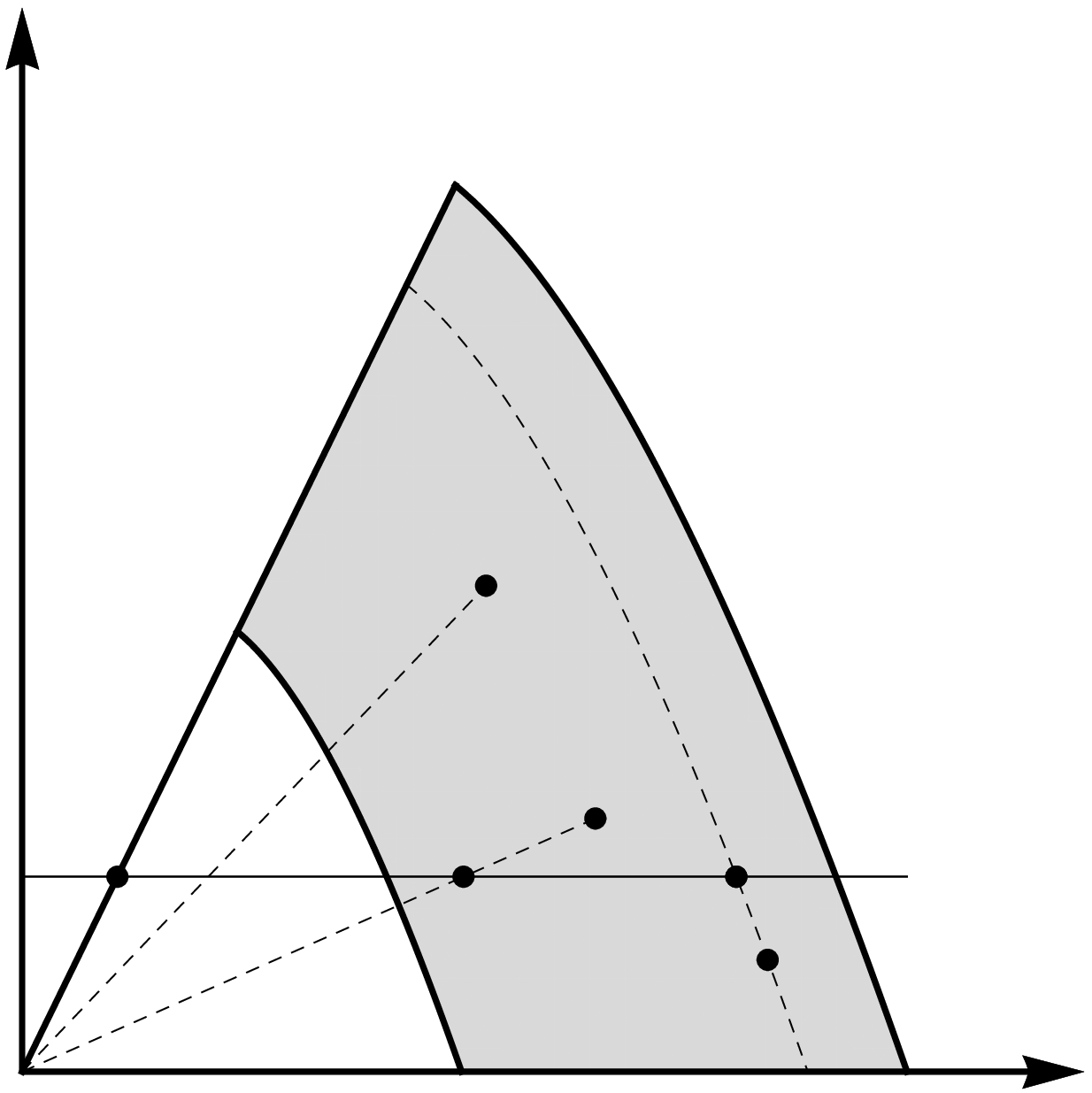}};
\node at (52,0) {$\rho$};
\node at (0,50) {$f$};
\node at (0,13) {$F$};
\node at (6,14.5) {$\check{\mathtt u}_r^2$};
\node at (25,8.5) {$\check{\mathtt u}_r^1$};
\node at (37.5,14) {$\hat{\mathtt u}_\ell$};
\node at (35,7) {$u_\ell$};
\node at (30,17) {$u_r^1$};
\node at (25.5,27.5) {$u_r^2$};
\end{tikzpicture}\qquad
\begin{tikzpicture}[every node/.style={anchor=south west,inner sep=0pt},x=1mm, y=1mm]
\node at (4,4) {\includegraphics[height=50mm]{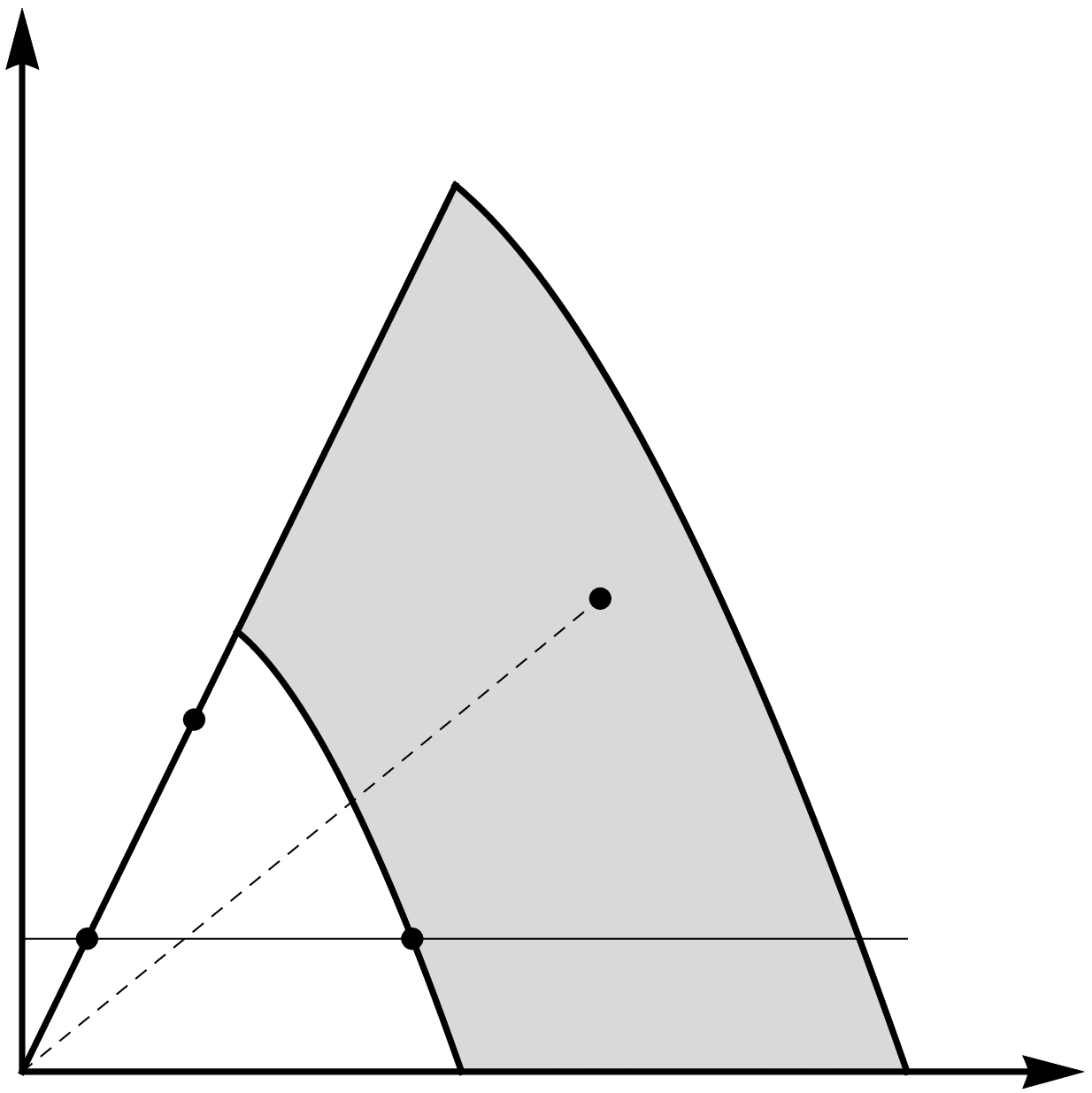}};
\node at (52,0) {$\rho$};
\node at (0,50) {$f$};
\node at (0,10) {$F$};
\node at (8,20) {$u_\ell$};
\node at (23,12) {$\hat{\mathtt u}_\ell$};
\node at (30,28) {$u_r$};
\node at (5.5,13) {$\check{\mathtt u}_r$};
\end{tikzpicture}
\end{center}
\caption{The selection criterion \eqref{e:hat-check} for $\hat{\mathtt u}_\ell \doteq \hat{\mathtt u}(\mathtt{w}(u_\ell),F)$ and $\check{\mathtt u}_r \doteq \check{\mathtt u}(v_r,F)$ exploited in Definition~\ref{def:01} in the case $(u_\ell,u_r)\in\mathcal{D}_2$ and $F \in (0,f_{\rm c}^-)$.
In the first picture $u_\ell^1$, $u_\ell^2$ represent the left state in two different cases and $\hat{\mathtt u}_\ell^1$, $\hat{\mathtt u}_\ell^2$ are the corresponding $\hat{\mathtt u}_\ell$.
Analogously in the second and third pictures for $u_r^1$, $u_r^2$ and $\check{\mathtt u}_r^1$, $\check{\mathtt u}_r^2$.
}
\label{f:uhatucheck0}
\end{figure}
We point out that $\hat{\mathtt u}_\ell$ and $\check{\mathtt u}_r$ satisfy the following general properties.
\[
\begin{minipage}{.43\textwidth}
If $(u_\ell,u_r) \in \mathcal{D}_2$, then $\mathtt{w}(u_\ell) > \mathtt{w}(\check{\mathtt u}_r)$ and $v_r > \hat{\mathtt v}_\ell$.\\
If $(u_\ell,u_r) \in \mathcal{D}_2$ and $u_\ell \in \Omega_{\rm f}^-$, then $\mathtt{w}(\hat{\mathtt u}_\ell) = w^-$.\\
If $(u_\ell,u_r) \in \mathcal{D}_2$ and $u_r \in \Omega_{\rm f}$, then $\check{\mathtt v}_r = V$.
\end{minipage}
\]

It is easy to prove that $(t,x)\mapsto\mathcal{R}[u_\ell,u_r](x/t)$ and $(t,x)\mapsto\mathcal{R}_F[u_\ell,u_r](x/t)$ are solutions to Riemann problems \eqref{eq:system}, \eqref{eq:Rdata} and \eqref{eq:system}, \eqref{eq:const}, \eqref{eq:Rdata} in the sense of Definitions~\ref{def:solunconstmod} and~\ref{def:solconstmod}, respectively.

We recall that both $\mathcal{R}$ and $\mathcal{R}_F$ are ${\mathbf{L^{1}_{loc}}}$-continuous, see~\cite[Propositions~2 and~3]{edda-nikodem-mohamed}.

\section{Example}\label{s:exPT}

In this section we apply model \eqref{eq:system}, \eqref{eq:initdat}, \eqref{eq:const} to simulate the traffic across, for instance, a toll gate located at $x = 0$ and with capacity $F$.
More specifically, let $w^-$ and $w^+$ be the Lagrangian markers corresponding to vehicles that are initially at rest in $[x_A,x_B)$ and $[x_B,0)$, respectively.
The resulting initial condition is
\[
u^o(x) \doteq
\begin{cases}
u_\ell&\text{if }x \in [x_A,x_B),\\
u_r&\text{if }x \in [x_B,0),\\
u_0&\text{if }x \in{{\mathbb{R}}} \setminus [x_A,0),
\end{cases}
\]
where $u_0 \doteq (0,V)$, $u_\ell \doteq (p^{-1}(w^-),0)$ and $u_r \doteq (p^{-1}(w^+),0)$, see \figurename~\ref{f:PTfundiawaves}.

\begin{figure}[!ht]
\begin{center}
\begin{tikzpicture}[every node/.style={anchor=south west,inner sep=0pt},x=1mm, y=1mm]
\node at (4,4) {\includegraphics[height=50mm]{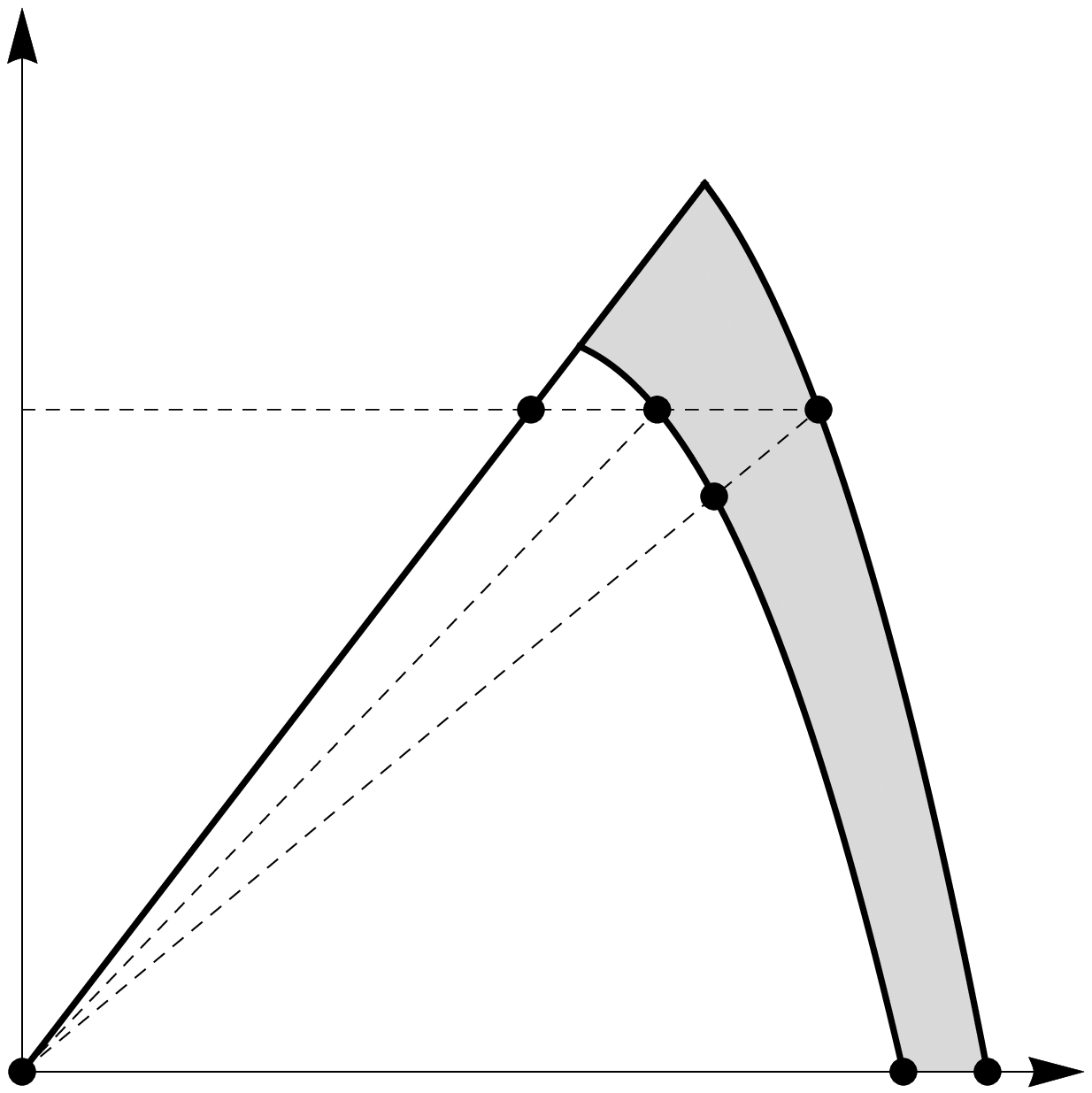}};
\node at (52,-1) {\strut $\rho$};
\node at (3,-1) {\strut $u_0$};
\node at (43,-1) {\strut $u_\ell$};
\node at (48,-1) {\strut $u_r$};
\node at (34.3,26.5) {${\mathtt u}_*$};
\node at (26,35) {\strut $\check{\mathtt u}$};
\node at (42,35) {\strut $\hat{\mathtt u}_r$};
\node at (33,35) {\strut $\hat{\mathtt u}_\ell$};
\node at (0,50) {$f$};
\node at (0,34) {$F$};
\end{tikzpicture}
\qquad
\begin{tikzpicture}[every node/.style={anchor=south west,inner sep=0pt},x=1mm, y=1mm]
\node at (4,4) {\includegraphics[height=50mm]{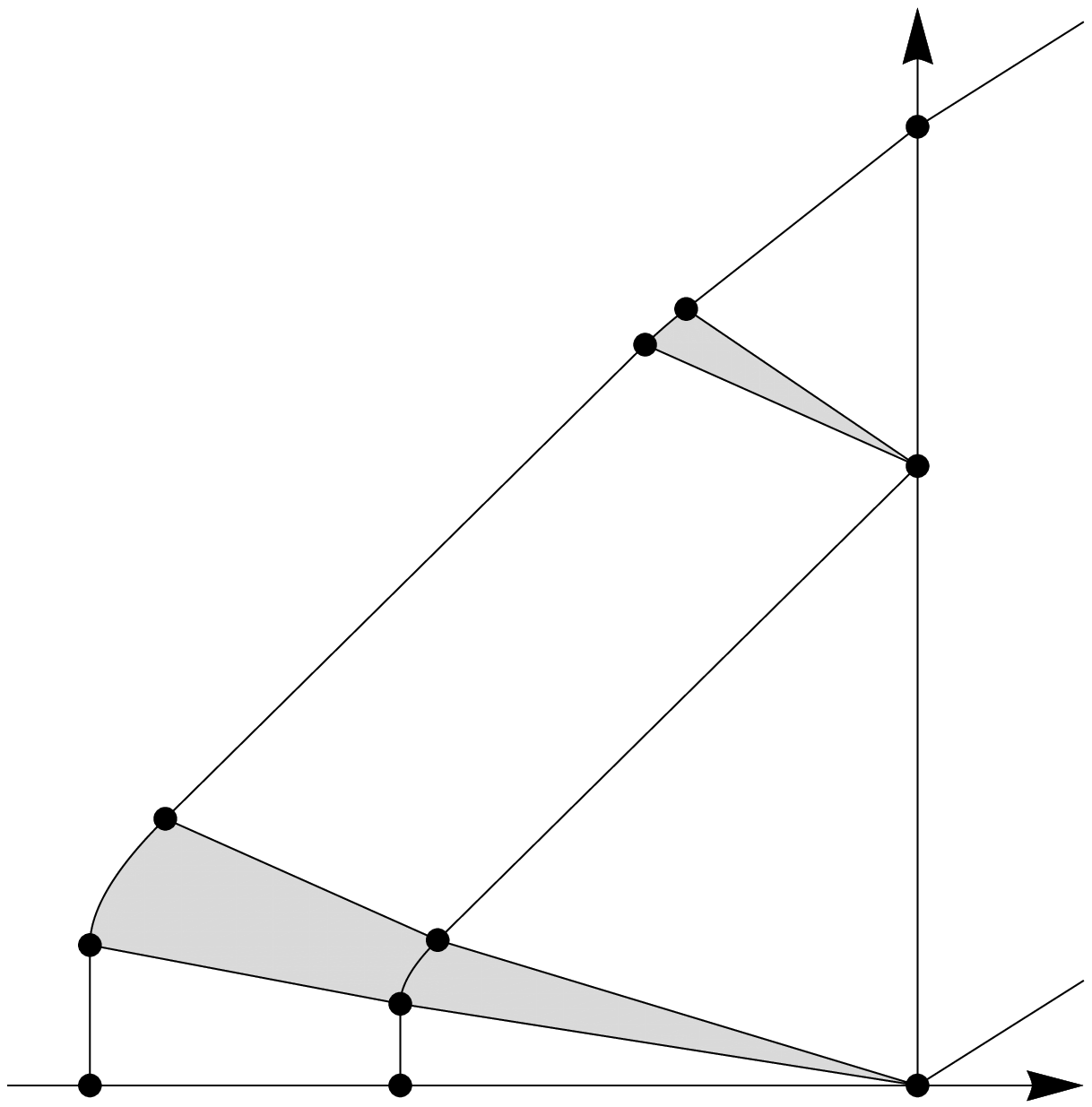}};
\node at (52,-1) {\strut $x$};
\node at (41.2,50) {$t$};
\node at (5,4.2) {\strut $A$};
\node at (19.5,4.2) {\strut $B$};
\node at (19.7,9.1) {$C$};
\node at (5,11.5) {$D$};
\node at (23,12.5) {$E$};
\node at (8,17) {$F$};
\node at (46.7,31.5) {$G$};
\node at (29.2,38) {$H$};
\node at (33.2,40) {$I$};
\node at (46.7,45.6) {$L$};
\node at (12,35) {$u_0$};
\node at (13,6) {$u_\ell$};
\node at (24.2,5.3) {$u_r$};
\node at (37,15) {$\hat{\mathtt u}_r$};
\node at (27,24) {${\mathtt u}_*$};
\node at (41,39) {$\hat{\mathtt u}_\ell$};
\node at (51,25) {$\check{\mathtt u}$};
\end{tikzpicture}
\end{center}
\caption{Notations used to describe the solution constructed in Section~\ref{s:exPT}.}
\label{f:PTfundiawaves}
\end{figure}

The resulting solution can be constructed by solving the Riemann problems corresponding to the discontinuities of $u^o$ and by considering the interactions of the waves between themselves or with the point constraint $x=0$.
We describe below the solution and its construction in more details.
Let
\begin{align*}
\hat{\mathtt u}_\ell &\doteq \hat{\mathtt u}(w^-,F),&
\hat{\mathtt u}_r &\doteq \hat{\mathtt u}(w^+,F),&
\check{\mathtt u} &\doteq \check{\mathtt u}(V,F),&
{\mathtt u}_* &\doteq {\mathtt u}_*(u_\ell,\hat{\mathtt u}_r).
\end{align*}
At $x=0$ we apply $\mathcal{R}_F$ and obtain a backward rarefaction ${\rm R}_0(u_r,\hat{\mathtt u}_r)$, a stationary non-classical shock ${\rm NS}_0(\hat{\mathtt u}_r,\check{\mathtt u})$ and a forward contact discontinuity ${\rm CD}_0(\check{\mathtt u},u_0)$, which moves with speed $V$.
At $x=x_B$ we apply $\mathcal{R}$ and obtain a stationary contact discontinuity ${\rm CD}_B(u_\ell,u_r)$.
Let $C$ and $E$ be the starting and final interaction points between ${\rm CD}_B$ and ${\rm R}_0$.
During such interaction we have that ${\rm CD}_B$ accelerates, while ${\rm R}_0$ crosses ${\rm CD}_B$ and eventually changes its values.
After time $t=t_E$ we have that ${\rm CD}_B$ moves with speed $\hat{\mathtt v}_r > 0$ and interacts with ${\rm NS}_0$ at $G$.
At $G$ we apply $\mathcal{R}_F$ and obtain a backward rarefaction ${\rm R}_G(\mathtt{u}_*,\hat{\mathtt u}_\ell)$ and a stationary non-classical shock ${\rm NS}_G(\hat{\mathtt u}_\ell,\check{\mathtt u})$.

At $x=x_A$ we apply $\mathcal{R}$ and obtain a stationary phase transition ${\rm PT}_A(u_0,u_\ell)$.
Let $D$ and $F$ be the starting and final interaction points between ${\rm PT}_A$ and ${\rm R}_0$.
During the time interval $(t_D,t_F)$ we have that ${\rm PT}_A$ accelerates and ${\rm R}_0$ starts to disappear.
After time $t=t_F$ we have that ${\rm PT}_A$ moves with speed $\hat{\mathtt v}_r > 0$.
Let $H$ and $I$ be the starting and final interaction points between ${\rm PT}_A$ and ${\rm R}_G$.
Then, during the time interval $(t_H,t_I)$ we have that ${\rm PT}_A$ accelerates and ${\rm R}_G$ starts to disappear.
After time $t=t_I$ we have that ${\rm PT}_A$ moves with speed $\hat{\mathtt v}_\ell > 0$.
Finally, ${\rm PT}_A$ interacts with ${\rm NS}_G$ at $L$ and then moves with speed $V$.

In \figurename~\ref{f:PTsol} we represent in different coordinates the quantitative evolution of the solution corresponding to $p(\rho) \doteq \rho^2$ and to the data
\begin{align*}
&x_A = -8,&
&x_B = -5,&
&w^-=1,&
&w^+=6/5,&
&V=3/5,&
&F=\sqrt{3}/5.
\end{align*}
Such solution is obtained by the explicit analysis of the wave-fronts interactions with computer-assisted computation of the interaction times and front slopes.
\begin{figure}[!ht]
\centering{
\begin{tabular}{c@{\qquad}c}
\begin{tikzpicture}[every node/.style={anchor=south west,inner sep=0pt},x=1mm, y=1mm]
\node at (4,4) {\includegraphics[height=50mm]{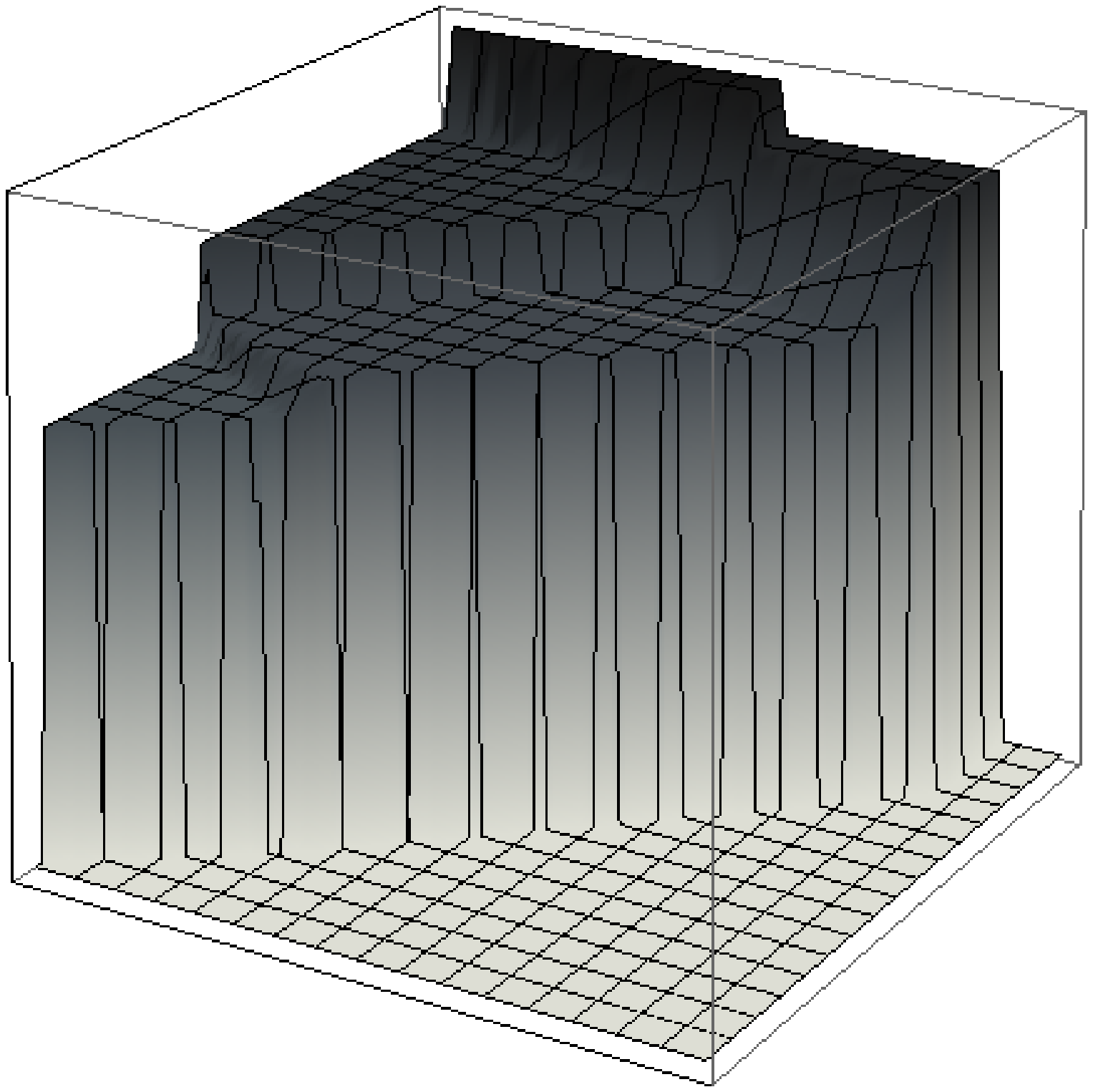}};
\node at (8,10) {$0$};
\node at (14,8) {$x_H$};
\node at (22,6) {$x_B$};
\node at (31,4) {$x_A$};
\node at (5,28) {$\check{\mathtt r}$};
\node at (5,32) {$\hat{\mathtt r}_\ell$};
\node at (5,36) {$\hat{\mathtt r}_r$};
\node at (5,39) {$\rho_\ell$};
\node at (5,42) {$\rho_r$};
\node at (8,44) {\strut $t_L$};
\node at (12,46) {\strut $t_H$};
\node at (18.5,48.5) {\strut $t_F$};
\node at (22,49.8) {\strut $t_D$};
\node at (25,51) {\strut $0$};
\end{tikzpicture}
&
\begin{tikzpicture}[every node/.style={anchor=south west,inner sep=0pt},x=1mm, y=1mm]
\node at (4,4) {\includegraphics[height=50mm]{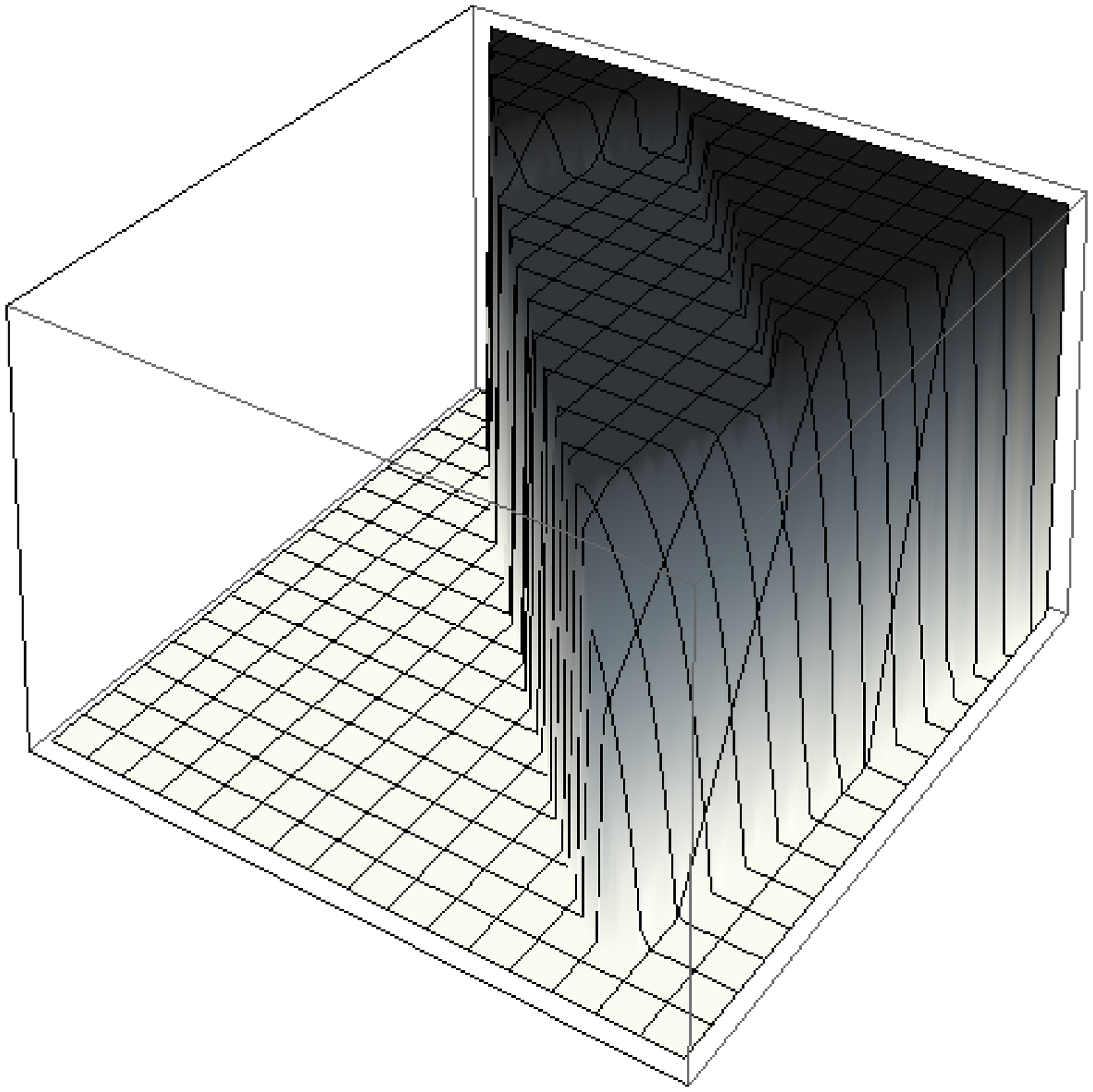}};
\node at (34.5,2.5) {$0$};
\node at (30,4) {$t_D$};
\node at (26,6) {$t_F$};
\node at (14,12) {$t_H$};
\node at (9.5,14) {\strut $t_L$};
\node at (3.5,20) {$0$};
\node at (3,33) {$\mathtt{f}_*$};
\node at (3,37) {$F$};
\node at (7,43) {$x_A$};
\node at (13,46.5) {$x_B$};
\node at (17.5,49.5) {$x_H$};
\node at (26,53.5) {$0$};
\end{tikzpicture}
\\
$(t,x)\mapsto\rho(t,x)$&
$(t,x)\mapsto f(t,x)$
\\[10pt]
\begin{tikzpicture}[every node/.style={anchor=south west,inner sep=0pt},x=1mm, y=1mm]
\node at (4,4) {\includegraphics[height=50mm]{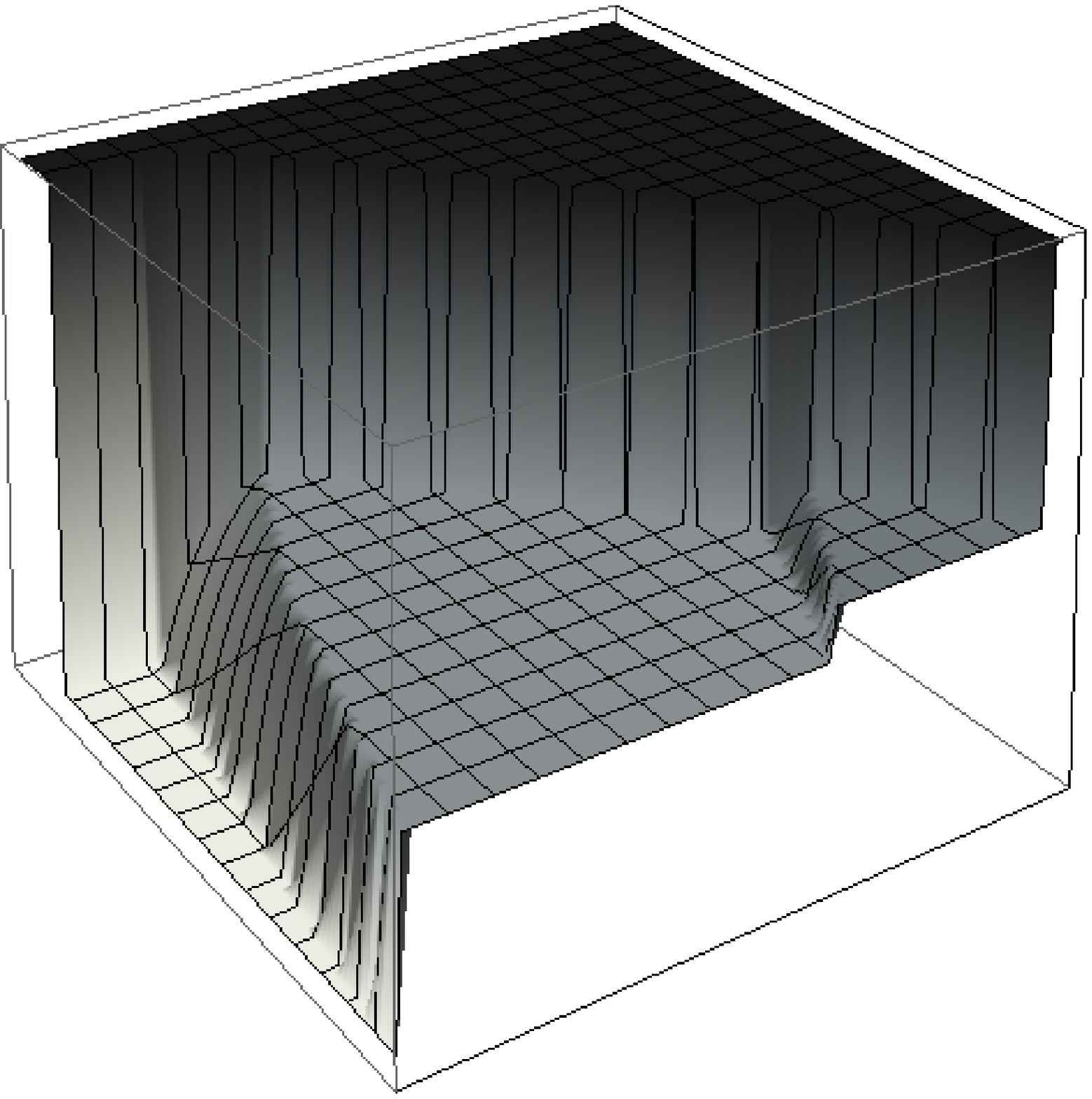}};
\node at (20,1) {$0$};
\node at (25,2.5) {$t_D$};
\node at (30,5) {$t_F$};
\node at (41,9.5) {$t_H$};
\node at (49.5,13) {$t_L$};
\node at (52,17.5) {$0$};
\node at (52,26) {$\hat{\mathtt v}_r = \mathtt{v}_*$};
\node at (52,29) {$\hat{\mathtt v}_\ell$};
\node at (52,40) {$w^-$};
\node at (32,52) {$x_A$};
\node at (40,48) {$x_B$};
\node at (45,45.5) {$x_H$};
\node at (50,43.5) {$0$};
\end{tikzpicture}&
\begin{tikzpicture}[every node/.style={anchor=south west,inner sep=0pt},x=1mm, y=1mm]
\node at (4,4) {\includegraphics[height=50mm]{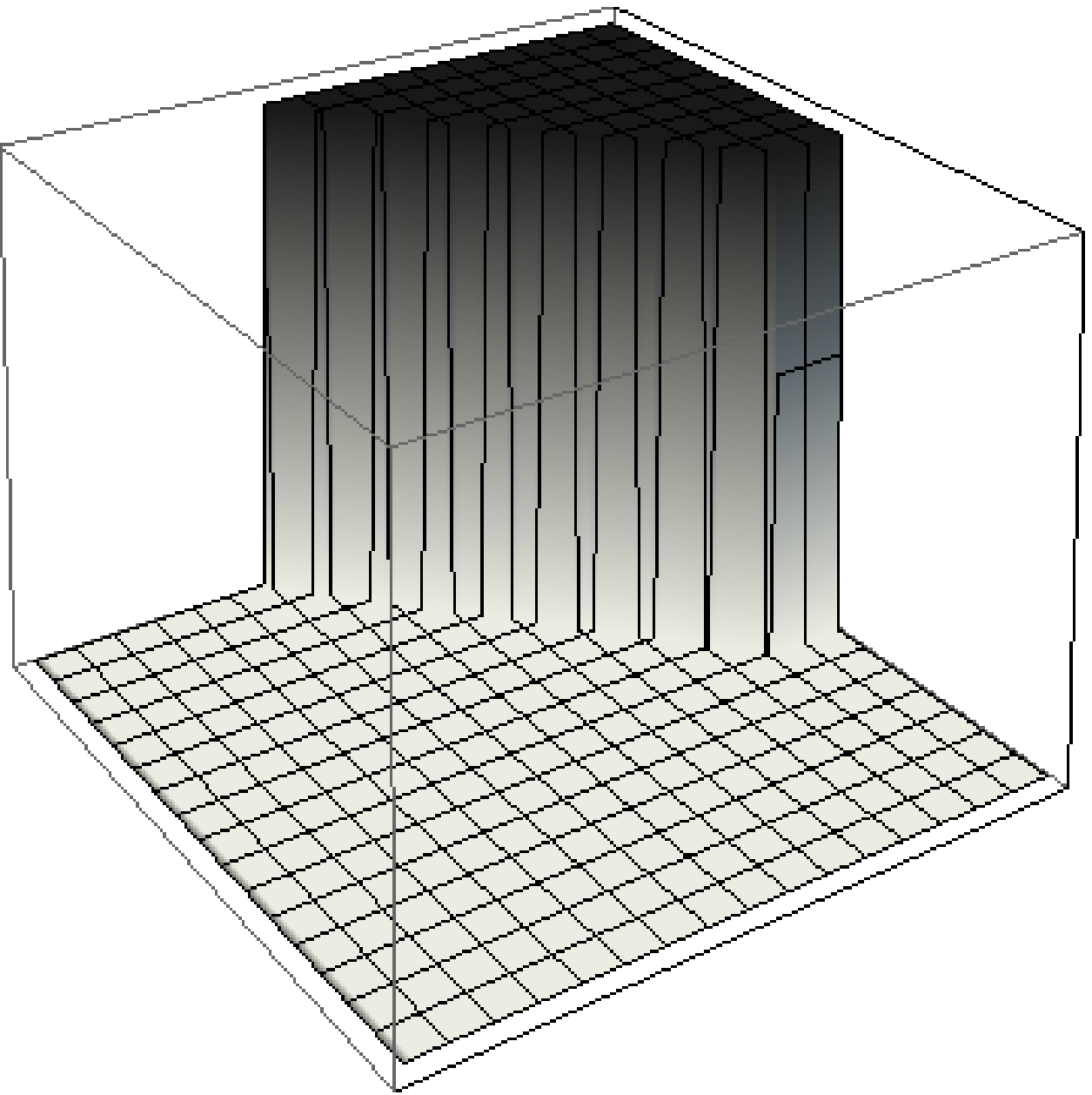}};
\node at (24,.5) {$t_L$};
\node at (38,6) {$t_G$};
\node at (52,14) {$0$};
\node at (54,17) {$w^-$};
\node at (55,40) {$w^+$};
\node at (49,46.5) {$x_A$};
\node at (43,49) {$x_B$};
\node at (33,54.5) {$0$};
\end{tikzpicture}
\\
$(t,x)\mapsto v(t,x)$&
$(t,x)\mapsto w(t,x)$
\end{tabular}
}
\caption{The solution constructed in Section~\ref{s:exPT}.
Above we let $\mathtt{f}_*=f(\mathtt{u}_*)$.}
\label{f:PTsol}
\end{figure}

We finally observe that, once the overall picture of the solution is known, it is possible to express in a closed form the time at which the last vehicle passes through $x = 0$, indeed $t_L = [(x_B-x_A) \, \rho_\ell-x_B \, \rho_r]/F \approx 24.4716$.

\section{Proof of Theorem~\ref{t:mainF}}\label{s:mainF}

In this section we prove Theorem~\ref{t:mainF}.
More precisely, in Section~\ref{s:approxsolR} we construct a grid $\mathcal{G}_n$, approximate Riemann solvers $\mathcal{R}_n$, $\mathcal{R}_{F,n}$ and an approximate solution $u_n = (\rho_n,v_n)$ to constrained Cauchy problem \eqref{eq:system}, \eqref{eq:initdat}, \eqref{eq:const}.
In Section~\ref{s:Gf} we prove that the approximate solution $u_n$ is well defined globally in time by introducing a non-increasing Temple functional $\mathcal{T}_n$, which strictly decreases any time the number of the discontinuities of $u_n$ increases.
In Section~\ref{s:conv} we prove that $u_n$ converges to $u$, which is a solution to \eqref{eq:system}, \eqref{eq:initdat}, \eqref{eq:const} and satisfies the estimates listed in \eqref{e:estimates}.
At last in Section~\ref{s:opt} we consider the flux density of the non-classical shocks.

We choose to study the total variation in the $(v,w)$-coordinates rather than in the $(\rho,v)$-coordinates.
This choice is in fact convenient to describe the grid, the approximate Riemann solvers and ease the forthcoming analysis, because the total variation of $u_n$ in these coordinates does not increase after any interaction away from $x=0$.
Furthermore, the entropy pairs in the $(v,w)$-coordinates are well defined, but in the $(\rho,v)$-coordinates are multi-valued at the vacuum.

For simplicity we assume below that $n\in\N$ is sufficiently large.
Moreover we simplify the notation by letting 
\begin{align*}
\mathtt{w}_\ell &\doteq \mathtt{w}(u_\ell),&
\hat{\mathtt u}_\ell &\doteq \hat{\mathtt u}(\mathtt{w}_\ell,F),&
\check{\mathtt u}_\ell &\doteq \check{\mathtt u}(v_\ell,F)
\end{align*}
and so on, where $\hat{\mathtt{u}}$ and $\check{\mathtt{u}}$ are defined in \eqref{e:hat-check}.

\subsection{The approximate solution}\label{s:approxsolR}

In this section we apply the wave-front tracking algorithm to construct an approximate solution $u_n$ in the space $\mathbf{PC}$ of piecewise constant functions taking finitely many values.
To do so we introduce a grid $\mathcal{G}_n$ in $\Omega$ and approximate Riemann solvers $\mathcal{R}_n$, $\mathcal{R}_{F,n} : \mathcal{G}_n \times \mathcal{G}_n \rightarrow \mathbf{PC}(\mathbb{R};\mathcal{G}_n)$.

\subsubsection*{The grid}

We introduce in $\Omega$ a grid $\mathcal{G}_n \doteq \Omega \cap \mathcal{P}$, see \figurename~\ref{f:grid}, with $\mathcal{P}$ given in the $(v,w)$-coordinates by
\[
\left( \cup_{i=0}^{M\cdot2^n} \left\{v^i\right\} \right) \times \left( \cup_{i=0}^{N\cdot2^n} \left\{w^i\right\} \right),
\]
where $M$, $N$, $v^i$ and $w^i$, are defined as follows:
\begin{figure}[!ht]
\begin{center}
\begin{tikzpicture}[every node/.style={anchor=south west,inner sep=0pt},x=1mm, y=1mm]
\node at (4,4) {\includegraphics[height=50mm]{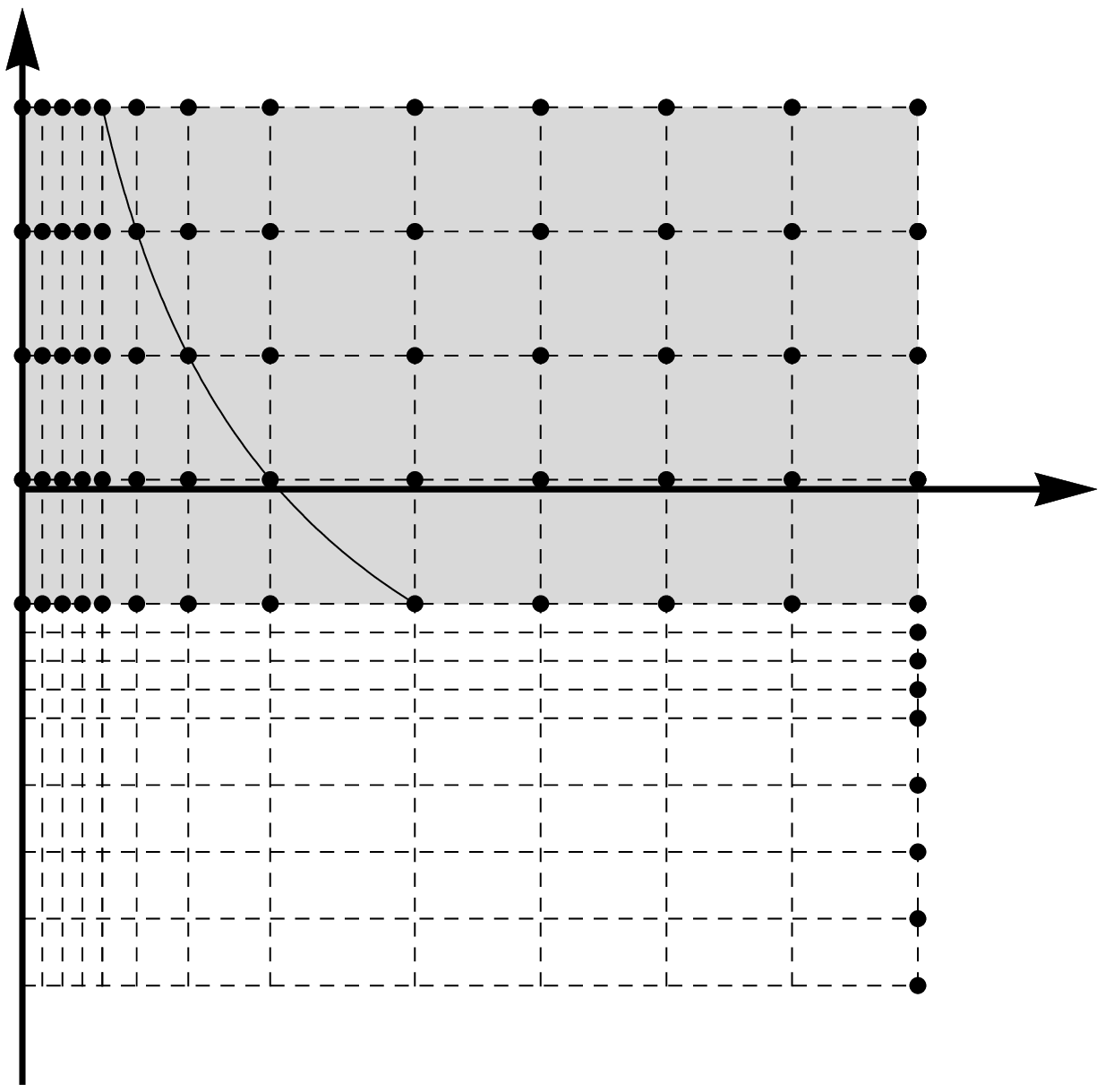}};
\node at (0,50) {\strut $w$};
\node at (52,27) {\strut $v$};
\node at (0,7.5) {\strut $w^0$};
\node at (0,19) {\strut $w^4$};
\node at (0,24.5) {\strut $w^8$};
\node at (-1,46) {\strut $w^{12}$};
\node at (46.5,46) {\strut $w^+$};
\node at (46.5,24.5) {\strut $w^-$};
\node at (46.5,19) {\strut $w_F$};
\node at (46.5,7.5) {\strut $w^--1$};
\node at (7.5,48.5) {\strut $v^4$};
\node at (21.5,48.5) {\strut $v^8$};
\node at (44,48.5) {\strut $v^{12}$};
\node at (7.5,4.5) {\strut $v_F^-$};
\node at (21.5,4.5) {\strut $v_F^+$};
\node at (44,4.5) {\strut $V$};
\node at (52,-1) {\color{white}{\strut $\rho$}};
\end{tikzpicture}
\qquad
\begin{tikzpicture}[every node/.style={anchor=south west,inner sep=0pt},x=1mm, y=1mm]
\node at (4,4) {\includegraphics[height=50mm]{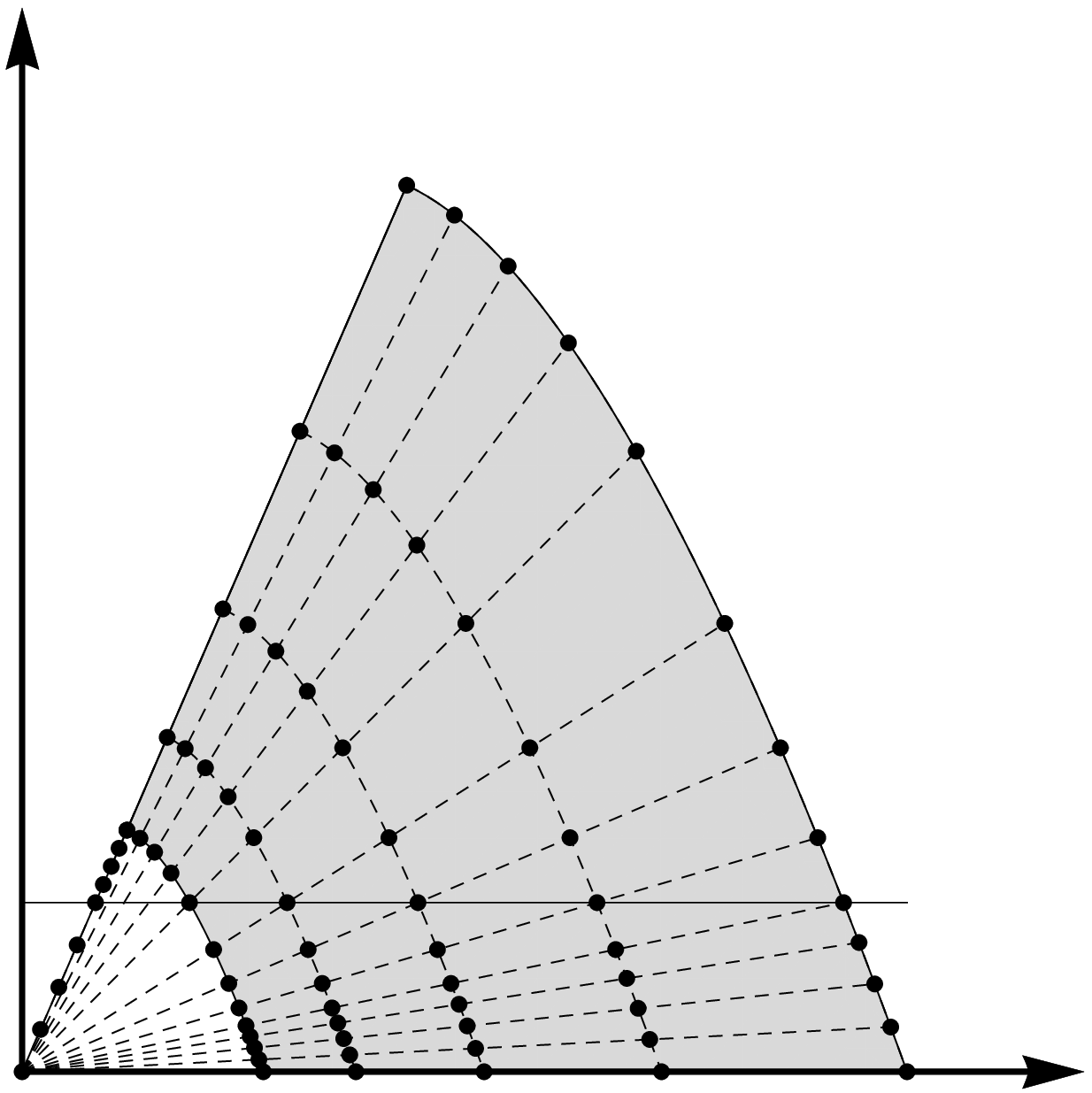}};
\node at (52,-1) {\strut $\rho$};
\node at (0,10) {\strut $F$};
\node at (0,50) {\strut $f$};
\end{tikzpicture}
\end{center}
\caption{The grid $\mathcal{G}_n$ corresponding to $F \in (0,f_{\rm c}^-)$ and $n = 2$.
The curve in the figure on the left is the support of $\Xi_F$, which corresponds to (a portion of) the horizontal line in the figure on the right.}
\label{f:grid}
\end{figure}
\begin{itemize}[leftmargin=*]\setlength{\itemsep}{0cm}%
\item
If $F = 0$, then we let $M=1$, $N=2$,
\begin{align*}
w^i &\doteq 
\begin{cases}
w^- - 1 + i \, 2^{ - n}&\hbox{if } i \in \left\{ 0, \ldots ,  2^n \right\}, \\ 
w^- + \left(i - 2^n\right) \, 2^{ - n} \, \left( w^+ - w^- \right)&\hbox{if } i \in \left\{ 2^n+1, \ldots ,  2\cdot 2^{n} \right\},
\end{cases}
\intertext{and}
v^i &\doteq 
i \, 2^{ - n} \, V \quad \hbox{if } i \in \left\{ 0, \ldots ,  2^n \right\}.
\end{align*}
\item
If $F \in (0,f_{\rm c}^-)$, then we let $M=3$, $N=3$,
\begin{align*}
w^i &\doteq 
\begin{cases}
w^- - 1 + i \, 2^{ - n} \, (w_F-w^-+1)&\hbox{if } i \in \left\{ 0, \ldots ,  2^n \right\}, \\ 
w_F + \left(i - 2^n\right) \, 2^{ - n} \, (w^--w_F)&\hbox{if } i \in \left\{ 2^n+1, \ldots ,  2\cdot 2^{n} \right\}, \\ 
w^- + \left(i - 2 \cdot 2^n\right) \, 2^{ - n} \, \left( w^+ - w^- \right)&\hbox{if } i \in \left\{ 2\cdot 2^n+1, \ldots ,  3\cdot 2^{n} \right\},
\end{cases}
\intertext{and}
v^i &\doteq 
\begin{cases}
i \, 2^{ - n} \, v_F^-&\hbox{if } i \in \left\{ 0, \ldots ,  2^n \right\}, \\ 
\Xi_F^{-1}(w^{4\cdot 2^{n} - i})&\hbox{if } i \in \left\{ 2^n+1, \ldots ,  2\cdot 2^{n} \right\}, \\ 
v_F^+ + \left(i - 2\cdot 2^{n}\right) \, 2^{ - n} \, ( V - v_F^+ )&\hbox{if } i \in \left\{ 2\cdot 2^{n}+1, \ldots , 3\cdot 2^{n}\right\}.
\end{cases}
\end{align*}
\item
If $F \in [f_{\rm c}^-,f_{\rm c}^+]$, then we let $M=2$, $N=3$,
\begin{align*}
w^i &\doteq 
\begin{cases}
w^- - 1 + i \, 2^{ - n}&\hbox{if } i \in \left\{ 0, \ldots ,  2^n \right\}, \\ 
w^- + \left(i - 2^n\right) \, 2^{ - n} \, (w_F-w^-)&\hbox{if } i \in \left\{ 2^n+1, \ldots ,  2\cdot 2^{n} \right\}, \\ 
w_F + \left(i - 2 \cdot 2^n\right) \, 2^{ - n} \, \left( w^+ - w_F \right)&\hbox{if } i \in \left\{ 2\cdot 2^n+1, \ldots ,  3\cdot 2^{n} \right\},
\end{cases}
\intertext{and}
v^i &\doteq 
\begin{cases}
i \, 2^{ - n} \, v_F^-&\hbox{if } i \in \left\{ 0, \ldots ,  2^n \right\}, \\ 
\Xi_F^{-1}(w^{4\cdot 2^{n} - i})&\hbox{if } i \in \left\{ 2^n+1, \ldots ,  2\cdot 2^{n} \right\}.
\end{cases}
\end{align*}
Notice that if $F \in \{f_{\rm c}^-,f_{\rm c}^+\}$, then not necessarily $w^i \neq w^{i+1}$.
\end{itemize}

\subsubsection*{The approximate Riemann solvers}

An approximate solution $u_n \in \mathbf{PC}(\R;\mathcal{G}_n)$ to \eqref{eq:system}, \eqref{eq:initdat}, \eqref{eq:const} is constructed by applying the approximate Riemann solvers $\mathcal{R}_n$, $\mathcal{R}_{F,n} : \mathcal{G}_n \times \mathcal{G}_n \rightarrow \mathbf{PC}(\mathbb{R};\mathcal{G}_n)$, which are obtained by approximating the rarefactions. 
More precisely, for any $(u_\ell, u_r) \in \mathcal{G}_n \times \mathcal{G}_n$ such that $\mathtt{w}_\ell = \mathtt{w}_r$ and $v_\ell = v^h<v_r = v^{h+k}$, we let
\[
\mathcal{R}_n[u_\ell, u_r](\xi) \doteq \begin{cases}
u_\ell &\text{if } \xi \le \Lambda(u_\ell, u_1),\\
u_j &\text{if } \Lambda(u_{j - 1}, u_j)<\xi\leq \Lambda(u_j, u_{j + 1}),\ 1 \leq j \leq k - 1,\\
u_r &\text{if } \xi > \Lambda(u_{k - 1}, u_r),
\end{cases}
\]
where $u_0 \doteq u_\ell$, $u_{k} \doteq u_r$ and $u_j \in \mathcal{G}_n$ is such that $v_j \doteq v^{h+j}$ and $w_j =\mathtt{w}_\ell$. 
The Riemann solver $\mathcal{R}_{F,n}$ is defined as follows:
\begin{enumerate}
\item If $f\left(\mathcal{R}_n[u_\ell, u_r](0_\pm)\right) \le F$, then $\mathcal{R}_{F,n}[u_\ell, u_r] \dot\equiv \mathcal{R}_n[u_\ell, u_r]$.
\item If $f\left(\mathcal{R}_n[u_\ell, u_r](0_\pm)\right) > F$, then 
\[
\mathcal{R}_{F,n}[u_\ell, u_r](\xi) \doteq \begin{cases}
\mathcal{R}_n[u_\ell,\hat{\mathtt u}_\ell](\xi) &\text{if } \xi<0,\\
\mathcal{R}_n[\check{\mathtt u}_r,u_r](\xi) &\text{if } \xi \geq 0.
\end{cases}
\]
\end{enumerate}

\subsubsection*{The approximate solution}

An approximate solution $u_n \in \mathbf{PC}(\R_+\times\R;\mathcal{G}_n)$ to \eqref{eq:system}, \eqref{eq:initdat}, \eqref{eq:const} can be constructed as follows.
As a first step we approximate the initial datum $u^o$ with $u^o_n \in \mathbf{PC}(\R;\mathcal{G}_n)$ such that
\begin{equation}\label{e:apprini}
\begin{aligned}
\|v^o_n\|_{\L\infty} &\leq \|v^o\|_{\L\infty},&
\tv(v_n^o) &\leq \tv(v^o),&
\lim_{n\to\infty}\|v^o_n-v^o\|_{\Lloc1} &= 0,&
\hat{\Upsilon}(u^o_n) &\leq C \, \hat{\Upsilon}(u^o),
\\[5pt]
\|\mathtt{w}^o_n\|_{\L\infty} &\leq \|\mathtt{w}^o\|_{\L\infty},&
\tv(\mathtt{w}_n^o) &\leq \tv(\mathtt{w}^o),&
\lim_{n\to\infty}\|\mathtt{w}^o_n-\mathtt{w}^o\|_{\Lloc1} &= 0,&
\check{\Upsilon}(u^o_n) &\leq C \, \check{\Upsilon}(u^o),
\end{aligned}
\end{equation}
for a constant $C$.
The approximate solution $u_n$ is then obtained by gluing together the approximate solutions computed by applying $\mathcal{R}_{F,n}$ at $x = 0$ at time $t = 0$ and at any time a wave-front reaches $x = 0$, and by applying $\mathcal{R}_n$ at any discontinuity of $u^o_n$ away from $x = 0$ or at any interaction between wave-fronts away from $x = 0$.
As usual, in order to extend the construction globally in time we have to ensure that only finitely many interactions may occur in finite time. 
In Section~\ref{s:Gf} we prove that $u_n(t,\cdot)$ is well defined for all $t > 0$ and belongs to $\mathbf{PC}(\R_+\times\R;\mathcal{G}_n)$.
Finally, in Section~\ref{s:conv} we prove that $u_n$ converges (up to a subsequence) in $\Lloc1$ to a limit $u$, which results to be a constrained solution to \eqref{eq:system}, \eqref{eq:initdat}, \eqref{eq:const} in the sense of Definition~\ref{def:solconstmod}.

\subsection{A priori estimates}\label{s:Gf}

In this section we prove the main a priori estimates on the sequence of approximate solutions $(u_n)_n$.
We prove in Proposition~\ref{p:interest} that $u_n$ takes values in $\mathcal{G}_n$ and we estimate $\tv(u_n(t,\cdot))$ uniformly in $n$ and $t$.
This together with Proposition~\ref{p:inter} guarantee that the number of interactions and the number of the discontinuities of $u_n$ are both bounded globally in time.

Observe that any Contact Discontinuity (CD) has non-negative speed (of propagation), any Shock (S) or Rarefaction Shock (RS) has negative speed, all the Non-classical Shocks (NSs) are stationary and the speed of all the possible Phase Transitions (PTs) ranges in the interval $(-f_{\rm c}^-/(p^{-1}(w^-)-\rho^-),V)$.
Below we say that $(u_\ell,u_r)$ is a null wave if $u_\ell = u_r$.
Notice that if $(u_\ell,u_r)$ is a PT then $u_\ell \in \Omega_{\rm f}^-$ and $u_r\in\Omega_{\rm c}^-$, moreover if $(u_\ell,u_r)$ is a PT with $\mathtt{w}_r>w^-$ then $\rho_\ell = 0$.

Let $u_n$ be an approximate solution.
Let $\sharp(t)$ be the number of waves/discontinuities of $u_n(t,\cdot)$ and introduce $\mathcal{T}_n \colon \R_+ \to \mathbb{R}_+$ defined as
\[
\mathcal{T}_n(t) \doteq \tv\bigl(v_n(t,\cdot)\bigr)+\tv\bigl(\mathtt{w}_n(t,\cdot)\bigr)+2\hat{\Upsilon}_n(t)+2\check{\Upsilon}_n(t),
\]
where $\hat{\Upsilon}_n(t) \doteq \hat{\Upsilon}(u_n(t,\cdot))$ and $\check{\Upsilon}_n(t\bigr) \doteq \check{\Upsilon}(u_n(t,\cdot))$.
Conventionally, we assume that $u_n$ is left continuous in time, i.e.~$u_n(t,\cdot) \equiv u_n(t_-,\cdot)$.
Then also $\mathcal{T}_n$ is left continuous in time.
By the monotonicity of $w\mapsto\hat{\mathtt v}(w)$, $w\mapsto\hat{\mathtt w}(w)$, $v\mapsto\check{\mathtt v}(v)$, $v\mapsto\check{\mathtt w}(v)$, see Remark~\ref{r:hatcheck}, and the definitions of $\hat{\Upsilon}$ and $\check{\Upsilon}$ given in \eqref{e:upsilons}, we have that
\begin{align*}
\hat{\Upsilon}_n(t)&
= \tv_+\Bigl(\hat{\mathtt v}\bigl(\mathtt{w}_n(t,\cdot)\bigr);(-\infty,0)\Bigr) + \tv_-\Bigl(\hat{\mathtt w}\bigl(\mathtt{w}_n(t,\cdot)\bigr);(-\infty,0)\Bigr)
\\&=
\sum_{x\in\mathsf{CD}_n} \Bigl\{ \bigl[\hat{\mathtt v}\bigl(\mathtt{w}_n(t,x_+)\bigr)-\hat{\mathtt v}\bigl(\mathtt{w}_n(t,x_-)\bigr)\bigr]_+ + \bigl[\hat{\mathtt w}\bigl(\mathtt{w}_n(t,x_-)\bigr)-\hat{\mathtt w}\bigl(\mathtt{w}_n(t,x_+)\bigr)\bigr]_+ \Bigr\},
\\
\check{\Upsilon}_n(t)&
= \tv_+\Bigl(\check{\mathtt v}\bigl(v_n(t,\cdot),F\bigr);(0,\infty)\Bigr) + \tv_-\Bigl(\check{\mathtt w}\bigl(v_n(t,\cdot),F\bigr);(0,\infty)\Bigr)
\\&=
\sum_{x\in\mathsf{RS}_n} \Bigl\{ \bigl[\check{\mathtt v}\bigl(v_n(t,x_+),F\bigr)-\check{\mathtt v}\bigl(v_n(t,x_-),F\bigr)\bigr]_+ + \bigl[\check{\mathtt w}\bigl(v_n(t,x_-),F\bigr)-\check{\mathtt w}\bigl(v_n(t,x_+),F\bigr)\bigr]_+ \Bigr\},
\end{align*}
where
\begin{align*}
\mathsf{CD}_n &\doteq \Bigl\{ x \in \R : \bigl(u_n(t,x_-),u_n(t,x_+)\bigr) \text{ is a CD in } x<0 \text{ such that } \mathtt{w}_n(t,x_-) > \max\{\mathtt{w}_n(t,x_+),w_F\} \Bigr\},
\\
\mathsf{RS}_n &\doteq \Bigl\{ x \in \R : \bigl(u_n(t,x_-),u_n(t,x_+)\bigr) \text{ is a RS in } x>0 \text{ such that } v_n(t,x_+) > \max\{v_n(t,x_-),v_F^-\} \Bigr\}.
\end{align*}
Let $\varepsilon_n>0$ be the minimal $(v,w)$-distance between two points in the grid $\mathcal{G}_n$, namely
\[
\varepsilon_n \doteq \min_{\substack{u^1,\, u^2 \in \mathcal{G}_n\\ u^1 \neq u^2}} \max \Bigl\{|v^1 - v^2| , |\mathtt{w}(u^1) - \mathtt{w}(u^2)|\Bigr\}.
\]

The next proposition ensures that the number of discontinuities of $u_n$ is uniformly bounded in time.
Moreover, it gives uniform bounds on the total variation of the approximate solution, which allows us to use Helly's Theorem.

\begin{proposition}\label{p:interest}
For any fixed $n \in \N$ sufficiently large and $u^o_n \in \mathbf{PC}(\R;\mathcal{G}_n)$, we have that:
\begin{enumerate}[label={{\rm\textbf{(\alph*)}}},leftmargin=*]
\item\label{TommyEmmanuel}
the map $t \mapsto \mathcal{T}_n(t)$ is non-increasing and decreases by at least $\varepsilon_n$ any time the number of waves increases;
\item
$u_n(t,\cdot) \in \mathbf{PC}(\R;\mathcal{G}_n)$ for all $t>0$.
\end{enumerate}
\end{proposition}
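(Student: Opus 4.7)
The plan is to establish \textbf{(a)} by exhaustively classifying all admissible interactions — either between two wave-fronts in the interior $x\neq 0$, or between a wave-front and the constraint at $x=0$ — and then to deduce \textbf{(b)} as a direct consequence. For \textbf{(b)}, note that $\mathcal{T}_n(0)$ is finite by the BV assumption, by \eqref{e:apprini}, and by hypothesis \ref{H.1} or \ref{H.2}, while $\mathcal{T}_n(t)\ge 0$ for all $t$; thus \textbf{(a)} implies that the number of instants at which the number of fronts strictly increases is bounded by $\mathcal{T}_n(0)/\varepsilon_n$, and between two such instants fronts can only cancel or merge. Combined with the uniform bound on wave speeds, this forces finitely many interactions in any finite time interval, so $u_n(t,\cdot)$ is globally defined and takes values in $\mathcal{G}_n$.

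For interactions away from the constraint, the approximate Riemann solver $\mathcal{R}_n$ inherits from $\mathcal{R}$ the following structural property: $1$-waves keep $\mathtt{w}$ constant while changing $v$, $2$-contact discontinuities keep $v$ constant while changing $\mathtt{w}$, and phase transitions join $\Omega_{\rm f}^-$ to $\Omega_{\rm c}^-$. Because of this decoupling in the $(v,\mathtt{w})$-coordinates, the two pieces $\tv(v_n(t,\cdot))$ and $\tv(\mathtt{w}_n(t,\cdot))$ separately do not increase across any such interaction, and since these interactions occur entirely in $\{x<0\}$ or entirely in $\{x>0\}$, the quantities $\hat{\Upsilon}_n$, $\check{\Upsilon}_n$ can only remain constant or decrease (the latter if a wave from the set $\mathsf{CD}_n$ or $\mathsf{RS}_n$ is annihilated). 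The grid quantization guarantees that any strict decrease is at least $\varepsilon_n$, and a standard cancellation/merging argument shows that when the number of fronts strictly increases, $\mathcal{T}_n$ strictly decreases.

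The main obstacle, and the very reason the functional $\mathcal{T}_n$ needs the extra terms $\hat{\Upsilon}_n$ and $\check{\Upsilon}_n$, lies in the interactions with the constraint at $x=0$. When a front impinges on $x=0$ one applies $\mathcal{R}_{F,n}$; depending on whether the resulting Riemann data sits in $\mathcal{D}_1$ or $\mathcal{D}_2$, the outcome may involve a new non-classical shock $(\hat{\mathtt u},\check{\mathtt u})$, a (discretized) rarefaction sent to $x<0$, and a contact discontinuity or rarefaction sent to $x>0$. This can locally increase $\tv(v_n(t,\cdot))+\tv(\mathtt{w}_n(t,\cdot))$, so that the pure total variation alone is not monotone. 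The key point is that any new rarefaction produced to the left of the constraint creates, by the very definition \eqref{e:hat-check}, a jump of $\hat{\mathtt v}\circ\mathtt{w}_n$ or $\hat{\mathtt w}\circ\mathtt{w}_n$ in the opposite monotonicity sense that is at least as large as the jump it introduces in $v_n$ or $\mathtt{w}_n$, so that the corresponding contribution to $\hat{\Upsilon}_n$ drops by an amount absorbing the increase; the analogous statement holds on the right with $\check{\Upsilon}_n$ and the set $\mathsf{RS}_n$. This is exactly why $\hat{\Upsilon}_n$ and $\check{\Upsilon}_n$ enter $\mathcal{T}_n$ with a factor $2$: one copy cancels the potential increase in $\tv(v_n)+\tv(\mathtt{w}_n)$, and the leftover strict decrease, quantified by the grid spacing $\varepsilon_n$, pays for the appearance of a new front.

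The delicate part of the proof is the exhaustive bookkeeping of the sub-cases at $x=0$, distinguishing according to $\mathtt{w}_\ell\lessgtr w_F$, $v_r\lessgtr v_F^\pm$, whether the incoming left/right state lies in $\Omega_{\rm f}$ or $\Omega_{\rm c}$, and whether the Riemann data falls in $\mathcal{D}_1$ or $\mathcal{D}_2$ before and after the interaction; one must check in every case that $\Delta\mathcal{T}_n\le 0$ and that $\Delta\mathcal{T}_n\le -\varepsilon_n$ whenever the number of fronts increases. Once this case analysis is complete, \textbf{(a)} follows, and \textbf{(b)} is immediate from the argument in the first paragraph.
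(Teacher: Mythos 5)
Your proposal follows the same route as the paper: a Temple-type functional $\mathcal{T}_n$ that is shown to be non-increasing by an exhaustive classification of interactions, with the doubled terms $2\hat{\Upsilon}_n+2\check{\Upsilon}_n$ absorbing the total-variation increase produced at $x=0$ and paying, through the grid quantization $\varepsilon_n$, for every new front. That said, what you have written is a plan rather than a proof: the entire substance of the argument is the case-by-case verification that $\Delta\mathcal{T}_n\le 0$ in every configuration and $\Delta\mathcal{T}_n\le-\varepsilon_n$ whenever $\Delta\sharp>0$, and this is announced but not carried out. Two of the shortcuts you propose do not quite work as stated. The claimed ``decoupling'' of $\tv(v_n)$ and $\tv(\mathtt{w}_n)$ away from $x=0$ fails for phase transitions, which change both coordinates; the paper needs the triangle inequality $|\mathtt{w}_\ell-\mathtt{w}_r|\le|\mathtt{w}_\ell-\mathtt{w}_m|+|\mathtt{w}_m-\mathtt{w}_r|$ in the CD-PT cases. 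And the compensation at the constraint is not a one-line consequence of \eqref{e:hat-check}: it rests on the monotonicity and continuity properties of $\hat{\mathtt u}$, $\check{\mathtt u}$ collected in Remark~\ref{r:hatcheck}, which degenerate for $F<f_{\rm c}^-$ (the maps are only left-continuous there), forcing the paper to treat $F\ge f_{\rm c}^-$ and $F<f_{\rm c}^-$ separately in every interaction involving a non-classical shock.

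The concretely wrong step is your deduction of finitely many interactions. It is not true that between two instants at which $\sharp$ increases ``fronts can only cancel or merge'': there are transparent crossings (a CD meeting a S, a RS or a PT, at $x\neq0$ or in $\mathcal{D}_1$) with $\Delta\sharp\le 0$ \emph{and} $\Delta\mathcal{T}_n=0$, so the number of interactions is not bounded by $\mathcal{T}_n(0)/\varepsilon_n$ plus cancellations. This is precisely why the paper isolates the bound on the number of interactions in the separate Proposition~\ref{p:inter}, using the functional $2^n\mathcal{T}_n/\varepsilon_n+\sharp$ together with a dedicated combinatorial argument for the exceptional crossings listed in \eqref{e:Korn} (a CD can undergo such a crossing only once with any given slower wave). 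For part \textbf{(b)} as stated only the uniform bound on the number of waves is needed, and that does follow from \textbf{(a)}; but the global-in-time definedness you claim in your first paragraph does not follow from the argument you give.
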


\begin{proof}
By construction for $t>0$ sufficiently small $u_n(t,\cdot)$ belongs to $\mathbf{PC}(\R;\mathcal{G}_n)$, more precisely it is piecewise constant with jumps along a finite number of straight lines.
If at time $t>0$ an interaction occurs, namely two waves meet or a wave reaches $x=0$, then the involved waves may change speed or strength, while new waves may be created.
To prove that $u_n(t,\cdot)$ belongs to $\mathbf{PC}(\R;\mathcal{G}_n)$ we have to provide an a priori upper bound for the number of waves, which follows from \ref{TommyEmmanuel}.

Clearly, if at time $t>0$ no interaction occurs then $\mathcal{T}_n(t) = \mathcal{T}_n(t_+)$.
For this reason we consider below all the possible interactions and distinguish the following main cases:
\begin{itemize}
\item
a single wave reaches $x=0$ and no NS is involved;
\item
a single wave reaches $x=0$ and a NS is involved;
\item
two waves interact away from $x=0$;
\item
two waves interact at $x=0$ and no NS is involved;
\item
two waves interact at $x=0$ and a NS is involved.
\end{itemize}

For completeness we estimate
\begin{align*}
\Delta\tv_v &\doteq \tv\bigl(v_n(t_+,\cdot)\bigr) - \tv\bigl(v_n(t,\cdot)\bigr),&
\Delta\hat{\Upsilon}_n &\doteq \hat{\Upsilon}_n\bigl(\mathtt{w}_n(t_+,\cdot)\bigr) - \hat{\Upsilon}_n\bigl(\mathtt{w}_n(t,\cdot)\bigr),
\\
\Delta\tv_w &\doteq \tv\bigl(\mathtt{w}_n(t_+,\cdot)\bigr) - \tv\bigl(\mathtt{w}_n(t,\cdot)\bigr),&
\Delta\check{\Upsilon}_n &\doteq \check{\Upsilon}_n\bigl(v_n(t_+,\cdot)\bigr) - \check{\Upsilon}_n\bigl(v_n(t,\cdot)\bigr),
\end{align*}
and
\begin{align*}
\Delta\sharp &\doteq \sharp(t_+) - \sharp(t_-),&
\Delta\mathcal{T}_n &\doteq \mathcal{T}_n(t_+)-\mathcal{T}_n(t_-).
\end{align*}

For simplicity in the exposition, whenever a NS is involved we consider separately the cases $F \in [f_{\rm c}^-,f_{\rm c}^+)$ and $F \in [0,f_{\rm c}^-)$.
Notice that $w^-> w_F$ if and only if $F < f_{\rm c}^-$, or equivalently $V \neq v_F^+$.
Notice also that if $F = f_{\rm c}^+$ then $\mathcal{D}_2 = \emptyset$, while if $F = 0$ then $\mathcal{D}_1 = \emptyset$.
At last, notice that if $F \in [f_{\rm c}^-,f_{\rm c}^+)$ and $(u_\ell,u_r) \in \mathcal{D}_2$, then $\hat{\mathtt w}_\ell = \mathtt{w}_\ell$ and $\check{\mathtt v}_r = v_r$.

\medskip\noindent
We start with the interaction estimates.

\begin{itemize}[wide=0pt]

\item
If a wave $(u_\ell,u_r)$ reaches $x=0$, $u_n(t,0_-) = u_n(t,0_+)$ and $(u_\ell,u_r) \in \mathcal{D}_1$, then the constraint has no influence on the wave and $0 = \Delta\tv_v = \Delta\tv_w = \Delta\sharp$.
Since any CD has non-negative speed, we have that $\Delta\hat{\Upsilon}_n \leq 0$.
Since any RS has negative speed, we have that $\Delta\check{\Upsilon}_n \leq 0$.
As a consequence $\Delta\mathcal{T}_n \leq 0$.

\item
Assume that a wave $(u_\ell,u_r)$ reaches $x=0$, $u_n(t,0_-) = u_n(t,0_+)$ and $(u_\ell,u_r) \in \mathcal{D}_2$.\\
If $F \in [f_{\rm c}^-,f_{\rm c}^+)$, then one of the following cases occurs:

\begin{enumerate}[wide=0pt]

\item[\textbf{CD$\pmb{_F^+}$}]
$(u_\ell,u_r)$ is a CD.
In this case $\hat{\mathtt v}_r \geq v_\ell = v_r = \check{\mathtt v}_r > \hat{\mathtt v}_\ell$, $\mathtt{w}_\ell = \hat{\mathtt w}_\ell > \check{\mathtt w}_r \geq \hat{\mathtt w}_r \geq \mathtt{w}_r$ and $f(u_\ell) > F \geq f(u_r)$.
$\mathcal{R}_{F,n}[u_\ell,u_r]$ has at most three waves $(u_\ell,\hat{\mathtt u}_\ell)$, $(\hat{\mathtt u}_\ell,\check{\mathtt u}_r)$ and $(\check{\mathtt u}_r,u_r)$ that are a S, a NS and a possibly null CD, respectively. 
As a consequence
\begin{align*}
\Delta\tv_v &= 2(v_\ell - \hat{\mathtt v}_\ell) > 0,& 
\Delta\hat{\Upsilon}_n &= -[\hat{\mathtt v}_r-\hat{\mathtt v}_\ell]_+ - [\hat{\mathtt w}_\ell-\hat{\mathtt w}_r]_+ < -(\hat{\mathtt v}_r-\hat{\mathtt v}_\ell) < 0,
\\
\Delta\tv_w &= 0,&
\Delta\check{\Upsilon}_n &= 0,
\end{align*}
therefore $\Delta\sharp \in \{1,2\}$ and $\Delta\mathcal{T}_n < -2(\hat{\mathtt v}_r-v_\ell) \leq 0$.

\item[\textbf{RS$\pmb{_F^+}$}]
$(u_\ell,u_r)$ is a RS. 
In this case $v_\ell = \check{\mathtt v}_\ell < v_r = \check{\mathtt v}_r$, $\check{\mathtt w}_r < \mathtt{w}_\ell = \check{\mathtt w}_\ell = \mathtt{w}_r$, $f(u_\ell) = F < f(u_r)$ and $u_\ell$, $u_r \in \Omega_{\rm c}$.
$\mathcal{R}_{F,n}[u_\ell,u_r]$ has two waves $(u_\ell,\check{\mathtt u}_r)$ and $(\check{\mathtt u}_r,u_r)$ that are a NS and a CD, respectively. 
As a consequence
\begin{align*}
\Delta\tv_v &= 0,& 
\Delta\hat{\Upsilon}_n &= 0,
\\
\Delta\tv_w &= 2(\mathtt{w}_\ell - \check{\mathtt w}_r) > 0,&
\Delta\check{\Upsilon}_n &= -[\check{\mathtt v}_r-\check{\mathtt v}_\ell]_+ -[\check{\mathtt w}_\ell-\check{\mathtt w}_r]_+ = -(v_r-v_\ell) -(\mathtt{w}_\ell - \check{\mathtt w}_r) < 0,
\end{align*}
therefore $\Delta\sharp = 1$ and $\Delta\mathcal{T}_n = -2(v_r-v_\ell) < 0$.
\end{enumerate}
\noindent
If $F \in [0,f_{\rm c}^-)$, then one of the following cases occurs:

\begin{enumerate}[wide=0pt]

\item[\textbf{CD$\pmb{_F^-}$}]
$(u_\ell,u_r)$ is a CD. 
In this case $\hat{\mathtt v}_r \geq v_\ell = v_r = \check{\mathtt v}_r > \hat{\mathtt v}_\ell$, $\hat{\mathtt w}_\ell \geq \mathtt{w}_\ell > \check{\mathtt w}_r \geq \hat{\mathtt w}_r \geq \mathtt{w}_r$ and $f(u_\ell) > F \geq f(u_r)$.
$\mathcal{R}_{F,n}[u_\ell,u_r]$ has at most three waves $(u_\ell,\hat{\mathtt u}_\ell)$, $(\hat{\mathtt u}_\ell,\check{\mathtt u}_r)$ and $(\check{\mathtt u}_r,u_r)$ that are a S or a PT, a NS and a possibly null CD, respectively. 
As a consequence
\begin{align*}
\Delta\tv_v &= 2(v_\ell - \hat{\mathtt v}_\ell) > 0,& 
\Delta\hat{\Upsilon}_n &= -[\hat{\mathtt v}_r-\hat{\mathtt v}_\ell]_+ -[\hat{\mathtt w}_\ell-\hat{\mathtt w}_r]_+
= -(\hat{\mathtt v}_r-\hat{\mathtt v}_\ell) -(\hat{\mathtt w}_\ell-\hat{\mathtt w}_r) < 0,
\\
\Delta\tv_w &= 2(\hat{\mathtt w}_\ell-\mathtt{w}_\ell) \geq 0,&
\Delta\check{\Upsilon}_n &= 0,
\end{align*}
therefore $\Delta\sharp \in \{1,2\}$ and $\Delta\mathcal{T}_n = -2(\hat{\mathtt v}_r-v_\ell) -2(\mathtt{w}_\ell-\hat{\mathtt w}_r) < 0$.

\item[\textbf{RS$\pmb{_F^-}$}]
$(u_\ell,u_r)$ is a RS. 
In this case $v_\ell < v_r \leq \check{\mathtt v}_r$, $w_F \leq \check{\mathtt w}_r < \mathtt{w}_r = \mathtt{w}_\ell = \check{\mathtt w}_\ell$, $f(u_\ell) = F < f(u_r)$ and $u_\ell$, $u_r \in \Omega_{\rm c}$.
$\mathcal{R}_{F,n}[u_\ell,u_r]$ has two waves $(u_\ell,\check{\mathtt u}_r)$ and $(\check{\mathtt u}_r,u_r)$ that are a NS and a PT or a CD, respectively. 
As a consequence
\begin{align*}
\Delta\tv_v &= 2(\check{\mathtt v}_r-v_r) \geq 0,& 
\Delta\hat{\Upsilon}_n &= 0,
\\
\Delta\tv_w &= 2(\mathtt{w}_\ell-\check{\mathtt w}_r) > 0,&
\Delta\check{\Upsilon}_n &= -[\check{\mathtt v}_r-\check{\mathtt v}_\ell]_+ -[\check{\mathtt w}_\ell-\check{\mathtt w}_r]_+
= -(\check{\mathtt v}_r-v_\ell) -(\mathtt{w}_\ell-\check{\mathtt w}_r) < 0,
\end{align*}
therefore $\Delta\sharp = 1$ and $\Delta\mathcal{T}_n= -2(v_r-v_\ell) < 0$.
Notice that $\check{\mathtt v}_r > v_r$ if and only if $\mathtt{w}_\ell = \mathtt{w}_r = w^-$ and $v_r > v_\ell = v_F^+$.

\end{enumerate}

\end{itemize}

Assume that two waves $(u_\ell,u_m)$ and $(u_m,u_r)$ interact at time $t>0$.
Let ${\mathtt u}_* \doteq {\mathtt u}_*(u_\ell,u_r)$.
Notice that ${\mathtt u}_* = u_r$ if and only if $(u_\ell,u_m)$ is a S or a RS, while ${\mathtt u}_* = u_\ell$ if and only if $(u_\ell,u_m)$ is a CD.

\begin{itemize}[wide=0pt]

\item
If the interaction occurs at $x\neq0$, then one of the following cases occurs:

\begin{enumerate}[wide=0pt]

\item[\textbf{CD-S}]
$(u_\ell,u_m)$ is a CD and $(u_m,u_r)$ is a S. 
In this case $v_\ell = v_m > v_r = v_*$, $\mathtt{w}_m = \mathtt{w}_r$, $\mathtt{w}_*$ belongs to the closed interval between $\mathtt{w}_\ell$ and $\mathtt{w}_r$, $\mathtt{W}(u_\ell) = \mathtt{W}({\mathtt u}_*)$, $\mathtt{w}_m = \mathtt{w}_r$, $f(u_m) > f(u_r)$ and $u_m$, $u_r \in \Omega_{\rm c}$.
$\mathcal{R}_n[u_\ell,u_r]$ has at most two waves $(u_\ell,{\mathtt u}_*)$ and $({\mathtt u}_*,u_r)$ that are respectively either a S and a CD, or a PT and a possibly null CD.
As a consequence $0 = \Delta\tv_v = \Delta\tv_w = \Delta\hat{\Upsilon}_n = \Delta\check{\Upsilon}_n$, therefore $\Delta\sharp \leq 0$ and $\Delta\mathcal{T}_n = 0$.

\item[\textbf{CD-RS}]
$(u_\ell,u_m)$ is a CD and $(u_m,u_r)$ is a RS. 
In this case $v_\ell = v_m < v_r = v_*$, $\mathtt{w}_\ell = \mathtt{w}_*$, $\mathtt{w}_m = \mathtt{w}_r$, $f(u_\ell) < f({\mathtt u}_*)$, $f(u_m) < f(u_r)$ and $u_\ell$, $u_m$, ${\mathtt u}_*$, $u_r \in \Omega_{\rm c}$.
$\mathcal{R}_n[u_\ell,u_r]$ has two waves $(u_\ell,{\mathtt u}_*)$ and $({\mathtt u}_*,u_r)$ that are a RS and a CD, respectively. 
As a consequence $0 = \Delta\tv_v = \Delta\tv_w = \Delta\hat{\Upsilon}_n = \Delta\check{\Upsilon}_n$, therefore $\Delta\sharp = 0$ and $\Delta\mathcal{T}_n = 0$.

\item[\textbf{CD-PT}]
$(u_\ell,u_m)$ is a CD and $(u_m,u_r)$ is a PT. 
In this case $v_\ell = v_m = V > v_r = v_*$, $\mathtt{w}_m < w^- \leq \mathtt{w}_r$, $\mathtt{w}_*$ belongs to the closed interval between $\mathtt{w}_\ell$ and $\mathtt{w}_r$, $u_\ell \in \Omega_{\rm f}$, $u_m \in \Omega_{\rm f}^-$ and ${\mathtt u}_*$, $u_r \in \Omega_{\rm c}$.
$\mathcal{R}_n[u_\ell,u_r]$ has at most two waves $(u_\ell,{\mathtt u}_*)$ and $({\mathtt u}_*,u_r)$ that are either a PT or a S and a possibly null CD, respectively. 
As a consequence
\begin{align*}
\Delta\tv_v &= 0,& 
\Delta\hat{\Upsilon}_n &\leq 0,
\\
\Delta\tv_w &= |\mathtt{w}_\ell-\mathtt{w}_r| - (|\mathtt{w}_\ell-\mathtt{w}_m| + |\mathtt{w}_m-\mathtt{w}_r|) \leq 0,&
\Delta\check{\Upsilon}_n &= 0,
\end{align*}
therefore $\Delta\sharp \leq 0$ and $\Delta\mathcal{T}_n \leq 0$.

\item[\textbf{S-S}]
$(u_\ell,u_m)$ and $(u_m,u_r)$ are Ss.
In this case $v_\ell > v_m > v_r$, $\mathtt{w}_\ell = \mathtt{w}_m = \mathtt{w}_r \geq w^-$ and $u_\ell$, $u_m$, $u_r \in \Omega_{\rm c}$.
$\mathcal{R}_n[u_\ell,u_r]$ has one wave $(u_\ell,u_r)$, which is a S. 
As a consequence $0 = \Delta\tv_v = \Delta\tv_w = \Delta\hat{\Upsilon}_n = \Delta\check{\Upsilon}_n$, therefore $\Delta\sharp = -1$ and $\Delta\mathcal{T}_n = 0$.

\item[\textbf{S-RS}]
$(u_\ell,u_m)$ is a S and $(u_m,u_r)$ is a RS. 
In this case $v_\ell > v_r > v_m$, $\mathtt{w}_\ell = \mathtt{w}_m = \mathtt{w}_r \geq w^-$ and $u_\ell$, $u_m$, $u_r \in \Omega_{\rm c}$.
$\mathcal{R}_n[u_\ell,u_r]$ has one wave $(u_\ell,u_r)$, which is a S. 
As a consequence
\begin{align*}
\Delta\tv_v &= -2 (v_r-v_m) < 0,& 
\Delta\hat{\Upsilon}_n &= 0,
\\
\Delta\tv_w &= 0,&
\Delta\check{\Upsilon}_n &\leq 0,
\end{align*}
therefore $\Delta\sharp = -1$ and $\Delta\mathcal{T}_n < 0$.

\item[\textbf{RS-S}]
$(u_\ell,u_m)$ is a RS and $(u_m,u_r)$ is a S. 
In this case $v_m > v_\ell > v_r$, $\mathtt{w}_\ell = \mathtt{w}_m = \mathtt{w}_r \geq w^-$ and $u_\ell$, $u_m$, $u_r \in \Omega_{\rm c}$.
$\mathcal{R}_n[u_\ell,u_r]$ has one wave $(u_\ell,u_r)$, which is a S. 
As a consequence
\begin{align*}
\Delta\tv_v &= -2 (v_m-v_\ell) < 0,& 
\Delta\hat{\Upsilon}_n &= 0,
\\
\Delta\tv_w &= 0,&
\Delta\check{\Upsilon}_n &\leq 0,
\end{align*}
therefore $\Delta\sharp = -1$ and $\Delta\mathcal{T}_n < 0$.

\item[\textbf{PT-S}]
$(u_\ell,u_m)$ is a PT and $(u_m,u_r)$ is a S. 
In this case $v_\ell = V > v_m > v_r$, $\mathtt{w}_\ell < w^- \leq \mathtt{w}_m = \mathtt{w}_r$, $u_\ell \in \Omega_{\rm f}^-$ and $u_m$, $u_r \in \Omega_{\rm c}$.
$\mathcal{R}_n[u_\ell,u_r]$ has one wave $(u_\ell,u_r)$, which is a PT. 
As a consequence $0 = \Delta\tv_v = \Delta\tv_w = \Delta\hat{\Upsilon}_n = \Delta\check{\Upsilon}_n$, therefore $\Delta\sharp = -1$ and $\Delta\mathcal{T}_n = 0$.

\item[\textbf{PT-RS}]
$(u_\ell,u_m)$ is a PT and $(u_m,u_r)$ is a RS. 
In this case $v_\ell = V \geq v_r > v_m$, $\mathtt{w}_\ell < w^- \leq \mathtt{w}_m = \mathtt{w}_r$, $u_\ell \in \Omega_{\rm f}^-$ and $u_m$, $u_r \in \Omega_{\rm c}$.
$\mathcal{R}_n[u_\ell,u_r]$ has one wave $(u_\ell,u_r)$, which is either a PT or a CD. 
As a consequence
\begin{align*}
\Delta\tv_v &= -2 (v_r-v_m) < 0,& 
\Delta\hat{\Upsilon}_n &= 0,
\\
\Delta\tv_w &= 0,&
\Delta\check{\Upsilon}_n &\leq 0,
\end{align*}
therefore $\Delta\sharp = -1$ and $\Delta\mathcal{T}_n < 0$.

\end{enumerate}

\item
If the interaction occurs at $x=0$ and $(u_\ell,u_r) \in \mathcal{D}_1$, then one of the following cases occurs:

\begin{enumerate}[wide=0pt]

\item[\textbf{CD-S$\pmb{_0}$}]
$(u_\ell,u_m)$ is a CD and $(u_m,u_r)$ is a S. 
In this case $v_\ell = v_m > v_r = v_*$, $\mathtt{w}_m = \mathtt{w}_r$, $\mathtt{w}_*$ belongs to the closed interval between $\mathtt{w}_\ell$ and $\mathtt{w}_r$, $\mathtt{W}(u_\ell) = \mathtt{W}({\mathtt u}_*)$, $\mathtt{w}_m = \mathtt{w}_r$, $f(u_r) < (u_m) \leq F$, $\min\{f(u_\ell), f({\mathtt u}_*)\} \leq F$ and $u_m$, $u_r \in \Omega_{\rm c}$.
$\mathcal{R}_{F,n}[u_\ell,u_r]$ has at most two waves $(u_\ell,{\mathtt u}_*)$ and $({\mathtt u}_*,u_r)$ that are respectively either a S and a CD, or a PT and a possibly null CD.
As a consequence $\Delta\hat{\Upsilon}_n \leq 0 = \Delta\tv_v = \Delta\tv_w = \Delta\check{\Upsilon}_n$, therefore $\Delta\sharp \leq 0$ and $\Delta\mathcal{T}_n \leq 0$.

\item[\textbf{CD-RS$\pmb{_0}$}]
$(u_\ell,u_m)$ is a CD and $(u_m,u_r)$ is a RS. 
In this case $v_\ell = v_m < v_r$, $\mathtt{w}_m = \mathtt{w}_r$, $\mathtt{w}_* = \mathtt{w}_\ell$, $f(u_\ell) < f({\mathtt u}_*)$, $f(u_m) < f(u_r)$, $\max\{f(u_m),f({\mathtt u}_*)\} \leq F$ and $u_\ell$, $u_m$, ${\mathtt u}_*$, $u_r \in \Omega_{\rm c}$.
$\mathcal{R}_{F,n}[u_\ell,u_r]$ has two waves $(u_\ell,{\mathtt u}_*)$ and $({\mathtt u}_*,u_r)$ that are a RS and a CD, respectively. 
As a consequence
\begin{align*}
\Delta\tv_v &= 0,& 
\Delta\hat{\Upsilon}_n &\leq 0,
\\
\Delta\tv_w &= 0,&
\Delta\check{\Upsilon}_n &\leq 0,
\end{align*}
therefore $\Delta\sharp = 0$ and $\Delta\mathcal{T}_n \leq 0$.

\item[\textbf{CD-NS$\pmb{_0}$}]
$(u_\ell,u_m)$ is a CD and $(u_m,u_r)$ is a NS. 
In this case $v_F^- \leq v_\ell = v_m < v_r \leq v_F^+$, $w^- \leq \mathtt{w}_\ell \leq \mathtt{w}_r < \mathtt{w}_m$, $f(u_\ell) < f({\mathtt u}_*) \leq F = f(u_m) = f(u_r)$ and $u_\ell$, $u_m$, ${\mathtt u}_*$, $u_r \in \Omega_{\rm c}$.
$\mathcal{R}_{F,n}[u_\ell,u_r]$ has a fan of RSs ranging from $u_\ell$ to ${\mathtt u}_*$ and a possibly null CD $({\mathtt u}_*,u_r)$. 
As a consequence $\Delta\tv_w = -2(\mathtt{w}_m-\mathtt{w}_r) < 0 = \Delta\tv_v = \Delta\hat{\Upsilon}_n = \Delta\check{\Upsilon}_n$, therefore $\Delta\sharp \in [-1,2^n-1]$ and $\Delta\mathcal{T}_n < 0$.

\item[\textbf{CD-PT$\pmb{_0}$}]
$(u_\ell,u_m)$ is a CD and $(u_m,u_r)$ is a PT.
In this case $v_\ell = v_m = V > v_r = v_*$, $\mathtt{w}_m < w^- \leq \mathtt{w}_r$, $\mathtt{w}_*$ belongs to the closed interval between $\mathtt{w}_\ell$ and $\mathtt{w}_r$, $\min\{f(u_\ell),f({\mathtt u}_*)\} \leq F$, $\max\{f(u_m),f(u_r)\} \leq F$, $u_\ell \in \Omega_{\rm f}$, $u_m \in \Omega_{\rm f}^-$ and ${\mathtt u}_*$, $u_r \in \Omega_{\rm c}$.
$\mathcal{R}_{F,n}[u_\ell,u_r]$ has at most two waves $(u_\ell,{\mathtt u}_*)$ and $({\mathtt u}_*,u_r)$ that are either a PT or a S and a possibly null CD, respectively. 
As a consequence
\begin{align*}
\Delta\tv_v &= 0,& 
\Delta\hat{\Upsilon}_n &\leq 0,
\\
\Delta\tv_w &= |\mathtt{w}_\ell-\mathtt{w}_r| - (|\mathtt{w}_\ell-\mathtt{w}_m| + |\mathtt{w}_m-\mathtt{w}_r|) \leq 0,&
\Delta\check{\Upsilon}_n &= 0,
\end{align*}
therefore $\Delta\sharp \in \{-1, 0\}$ and $\Delta\mathcal{T}_n \leq 0$.

\item[\textbf{S-S$\pmb{_0}$}]
$(u_\ell,u_m)$ and $(u_m,u_r)$ are Ss.
In this case $v_\ell > v_m > v_r$, $\mathtt{w}_\ell = \mathtt{w}_m = \mathtt{w}_r \geq w^-$, $f(u_r) < f(u_\ell) \leq F$ and $u_\ell$, $u_m$, $u_r \in \Omega_{\rm c}$.
$\mathcal{R}_{F,n}[u_\ell,u_r]$ has one wave $(u_\ell,u_r)$, which is a S. 
As a consequence $0 = \Delta\tv_v = \Delta\tv_w = \Delta\hat{\Upsilon}_n = \Delta\check{\Upsilon}_n$, therefore $\Delta\sharp = -1$ and $\Delta\mathcal{T}_n= 0$.

\item[\textbf{S-RS$\pmb{_0}$}]
$(u_\ell,u_m)$ is a S and $(u_m,u_r)$ is a RS. 
In this case $v_\ell > v_r > v_m$, $\mathtt{w}_\ell = \mathtt{w}_m = \mathtt{w}_r \geq w^-$, $f(u_r) < f(u_\ell) \leq F$ and $u_\ell$, $u_m$, $u_r \in \Omega_{\rm c}$.
$\mathcal{R}_{F,n}[u_\ell,u_r]$ has one wave $(u_\ell,u_r)$, which is a S. 
As a consequence
\begin{align*}
\Delta\tv_v &= -2 (v_r-v_m) < 0,& 
\Delta\hat{\Upsilon}_n &= 0,
\\
\Delta\tv_w &= 0,&
\Delta\check{\Upsilon}_n &\leq 0,
\end{align*}
therefore $\Delta\sharp = -1$ and $\Delta\mathcal{T}_n < 0$.

\item[\textbf{RS-S$\pmb{_0}$}]
$(u_\ell,u_m)$ is a RS and $(u_m,u_r)$ is a S. 
In this case $v_m > v_\ell > v_r$, $\mathtt{w}_\ell = \mathtt{w}_m = \mathtt{w}_r \geq w^-$, $f(u_r) < f(u_\ell) \leq F$ and $u_\ell$, $u_m$, $u_r \in \Omega_{\rm c}$.
$\mathcal{R}_{F,n}[u_\ell,u_r]$ has one wave $(u_\ell,u_r)$, which is a S. 
As a consequence
\begin{align*}
\Delta\tv_v &= -2 (v_m-v_\ell) < 0,& 
\Delta\hat{\Upsilon}_n &= 0,
\\
\Delta\tv_w &= 0,&
\Delta\check{\Upsilon}_n &\leq 0,
\end{align*}
therefore $\Delta\sharp = -1$ and $\Delta\mathcal{T}_n < 0$.

\item[\textbf{NS-S$\pmb{_0}$}]
$(u_\ell,u_m)$ is a NS and $(u_m,u_r)$ is a S. 
In this case $v_m > v_\ell \geq v_r = v_*$, $\mathtt{w}_* = \mathtt{w}_\ell > \mathtt{w}_m = \mathtt{w}_r \geq w^-$, $f(u_\ell) = f(u_m) = F > f(u_r)$ and $u_\ell$, $u_m$, $u_r \in \Omega_{\rm c}$.
$\mathcal{R}_{F,n}[u_\ell,u_r]$ has at most two waves $(u_\ell,{\mathtt u}_*)$ and $({\mathtt u}_*,u_r)$ that are a possibly null S and CD, respectively. 
As a consequence $\Delta\tv_v = -2 (v_m-v_\ell) < 0 = \Delta\tv_w = \Delta\hat{\Upsilon}_n = \Delta\check{\Upsilon}_n$, therefore $\Delta\sharp \in \{-1,0\}$ and $\Delta\mathcal{T}_n < 0$.

\item[\textbf{NS-PT$\pmb{_0}$}]
$(u_\ell,u_m)$ is a NS and $(u_m,u_r)$ is a PT. 
In this case $v_m = V > v_\ell \geq v_r = v_*$, $\mathtt{w}_\ell = \mathtt{w}_* \geq \mathtt{w}_r = w^- > \mathtt{w}_m$, $f_{\rm c}^- > f(u_\ell) = f(u_m) = F > f(u_r)$, $f({\mathtt u}_*) \leq F$, $u_\ell$, $u_r \in \Omega_{\rm c}$ and $u_m \in \Omega_{\rm f}^-$.
$\mathcal{R}_{F,n}[u_\ell,u_r]$ has at most two waves $(u_\ell,{\mathtt u}_*)$ and $({\mathtt u}_*,u_r)$ that are a possibly null S and a possibly null CD (but not both null), respectively.
As a consequence
\begin{align*}
\Delta\tv_v &= -2(V-v_\ell) < 0,& 
\Delta\hat{\Upsilon}_n &= 0,
\\
\Delta\tv_w &= -2(w^--\mathtt{w}_m) < 0,&
\Delta\check{\Upsilon}_n &= 0,
\end{align*}
therefore $\Delta\sharp \in \{-1,0\}$ and $\Delta\mathcal{T}_n < 0$.

\item[\textbf{PT-S$\pmb{_0}$}]
$(u_\ell,u_m)$ is a PT and $(u_m,u_r)$ is a S. 
In this case $v_\ell = V > v_m > v_r$, $\mathtt{w}_\ell < w^- \leq \mathtt{w}_m = \mathtt{w}_r$, $f(u_r) < f(u_m) \leq \max\{f(u_\ell),f(u_m)\} \leq F$, $u_\ell \in \Omega_{\rm f}^-$ and $u_m$, $u_r \in \Omega_{\rm c}^-$.
$\mathcal{R}_{F,n}[u_\ell,u_r]$ has one wave $(u_\ell,u_r)$, which is a PT. 
As a consequence $0 = \Delta\tv_v = \Delta\tv_w = \Delta\hat{\Upsilon}_n = \Delta\check{\Upsilon}_n$, therefore $\Delta\sharp = -1$ and $\Delta\mathcal{T}_n = 0$.

\item[\textbf{PT-RS$\pmb{_0}$}]
$(u_\ell,u_m)$ is a PT and $(u_m,u_r)$ is a RS. 
In this case $v_\ell = V \geq v_r > v_m$, $\mathtt{w}_\ell < w^- \leq \mathtt{w}_m = \mathtt{w}_r$, $u_\ell \in \Omega_{\rm f}^-$ and $u_m$, $u_r \in \Omega_{\rm c}$.
$\mathcal{R}_{F,n}[u_\ell,u_r]$ has one wave $(u_\ell,u_r)$, which is either a PT or a CD. 
As a consequence
\begin{align*}
\Delta\tv_v &= -2 (v_r-v_m) < 0,& 
\Delta\hat{\Upsilon}_n &= 0,& 
\\
\Delta\tv_w &= 0,&
\Delta\check{\Upsilon}_n &\leq 0,
\end{align*}
therefore $\Delta\sharp = -1$ and $\Delta\mathcal{T}_n < 0$.

\item[\textbf{PT-NS$\pmb{_0}$}]
$(u_\ell,u_m)$ is a PT and $(u_m,u_r)$ is a NS. 
In this case $v_\ell = V \geq v_r > v_m$, $\mathtt{w}_\ell < w^- \leq \mathtt{w}_r < \mathtt{w}_m$, $f(u_\ell) < f(u_m) = f(u_r) = F$, $u_\ell \in \Omega_{\rm f}^-$ and $u_m$, $u_r \in \Omega_{\rm c}$.
$\mathcal{R}_{F,n}[u_\ell,u_r]$ has one wave $(u_\ell,u_r)$, which is either a CD or a PT. 
As a consequence
\begin{align*}
\Delta\tv_v &= -2 (v_r-v_m) < 0,& 
\Delta\hat{\Upsilon}_n &= 0,
\\
\Delta\tv_w &= -2 (\mathtt{w}_m-\mathtt{w}_r) < 0,&
\Delta\check{\Upsilon}_n &= 0,
\end{align*}
therefore $\Delta\sharp = -1$ and $\Delta\mathcal{T}_n < 0$.

\end{enumerate}

\item
Assume that two waves $(u_\ell,u_m)$ and $(u_m,u_r)$ interact at $x=0$ and $(u_\ell,u_r) \in \mathcal{D}_2$.
\\\noindent
If $F \in [f_{\rm c}^-,f_{\rm c}^+)$, then one of the following cases occurs:

\begin{enumerate}[wide=0pt]

\item[\textbf{CD-S$\pmb{_F^+}$}]
$(u_\ell,u_m)$ is a CD and $(u_m,u_r)$ is a S. 
In this case $\hat{\mathtt v}_r \geq v_\ell = v_m > v_r = \check{\mathtt v}_r > \hat{\mathtt v}_\ell$, $\mathtt{w}_\ell = \hat{\mathtt w}_\ell > \check{\mathtt w}_r > \mathtt{w}_m = \mathtt{w}_r$, $\mathtt{w}_\ell > \hat{\mathtt w}_r$, $f(u_\ell) > f({\mathtt u}_*) > F \geq f(u_m) > f(u_r)$ and $u_\ell$, $u_m$, $u_r \in \Omega_{\rm c}$.
$\mathcal{R}_{F,n}[u_\ell,u_r]$ has three waves $(u_\ell,\hat{\mathtt u}_\ell)$, $(\hat{\mathtt u}_\ell,\check{\mathtt u}_r)$ and $(\check{\mathtt u}_r,u_r)$, which are a S, a NS and a CD, respectively.
As a consequence
\begin{align*}
\Delta\tv_v &= 2(v_r-\hat{\mathtt v}_\ell) > 0,& 
\Delta\hat{\Upsilon}_n &= -(\hat{\mathtt v}_m-\hat{\mathtt v}_\ell) -(\mathtt{w}_\ell-\hat{\mathtt w}_m) < 0,
\\
\Delta\tv_w &= 0,&
\Delta\check{\Upsilon}_n &= 0,
\end{align*}
therefore $\Delta\sharp =1$ and $\Delta\mathcal{T}_n = -2(\hat{\mathtt v}_r-v_r) -2(\mathtt{w}_\ell-\hat{\mathtt w}_r) < 0$.

\item[\textbf{CD-RS$\pmb{_F^+}$}]
$(u_\ell,u_m)$ is a CD and $(u_m,u_r)$ is a RS. 
In this case $\hat{\mathtt v}_\ell \leq v_\ell = v_m = \check{\mathtt v}_m < v_r = \check{\mathtt v}_r$, $\hat{\mathtt v}_\ell < \hat{\mathtt v}_m$, $\mathtt{w}_\ell = \hat{\mathtt w}_\ell > \hat{\mathtt w}_m \geq \mathtt{w}_m = \mathtt{w}_r$, $\check{\mathtt w}_r < \check{\mathtt w}_m$, $f(u_m) < f(u_r) \leq f(u_\ell) < f({\mathtt u}_*)$, $f(u_m) \leq F < f({\mathtt u}_*)$ and $u_\ell$, $u_m$, $u_r \in \Omega_{\rm c}$.
$\mathcal{R}_{F,n}[u_\ell,u_r]$ has at most three waves $(u_\ell,\hat{\mathtt u}_\ell)$, $(\hat{\mathtt u}_\ell,\check{\mathtt u}_r)$ and $(\check{\mathtt u}_r,u_r)$, which are a possibly null S, a NS and a possibly null CD, respectively.
As a consequence
\begin{align*}
\Delta\tv_v &= 2(v_\ell-\hat{\mathtt v}_\ell) \geq 0,& 
\Delta\hat{\Upsilon}_n &= -(\hat{\mathtt v}_m-\hat{\mathtt v}_\ell) -(\hat{\mathtt w}_\ell-\hat{\mathtt w}_m) < -(\hat{\mathtt v}_m-\hat{\mathtt v}_\ell) < 0,
\\
\Delta\tv_w &= 
\left\{
\begin{array}{@{}ll@{}}
2(\mathtt{w}_r-\check{\mathtt w}_r)&\text{if }f(u_m) = F
\\
0&\text{if }f(u_m) < F
\end{array}\right\}
\geq 0,&
\Delta\check{\Upsilon}_n &= -(v_r-v_m) -(\check{\mathtt w}_m-\check{\mathtt w}_r) <  -(\check{\mathtt w}_m-\check{\mathtt w}_r) < 0,
\end{align*}
and therefore
\begin{align*}
\Delta\sharp &\in\{-1,0,1\},&
\Delta\mathcal{T}_n&< -2(\hat{\mathtt v}_m-v_\ell) + 
\left\{
\begin{array}{@{}ll@{}}
0&\text{if }f(u_m) = F
\\
-2(\check{\mathtt w}_m-\check{\mathtt w}_r)&\text{if }f(u_m) < F
\end{array}\right\}
\leq0.
\end{align*}

\item[\textbf{CD-NS$\pmb{_F^+}$}]
$(u_\ell,u_m)$ is a CD and $(u_m,u_r)$ is a NS. 
In this case $v_\ell = v_m < v_r$, $\mathtt{w}_r < \min\{\mathtt{w}_\ell , \mathtt{w}_m\}$, $\mathtt{w}_\ell \neq \mathtt{w}_m$, $f(u_m) = f(u_r) = F \neq f(u_\ell)$ and $u_\ell$, $u_m$ $u_r \in \Omega_{\rm c}$.
\\\noindent
If $f(u_\ell) < F$, then $\mathcal{R}_{F,n}[u_\ell,u_r]$ has a fan of RSs from $u_\ell$ to $\hat{\mathtt u}_\ell$ and a NS $(\hat{\mathtt u}_\ell,u_r)$; as a consequence $\Delta\tv_w = -2(\mathtt{w}_m-\mathtt{w}_\ell) < 0 = \Delta\tv_v = \Delta\hat{\Upsilon}_n = \Delta\check{\Upsilon}_n$, therefore $\Delta\sharp \in [0,2^n-2]$ and $\Delta\mathcal{T}_n = -2(\mathtt{w}_m-\mathtt{w}_\ell) < 0$.
\\
If $f(u_\ell) > F$, then $\hat{\mathtt v}_\ell < v_\ell = v_m = \hat{\mathtt v}_m$, $\hat{\mathtt w}_\ell = \mathtt{w}_\ell > \mathtt{w}_m = \hat{\mathtt w}_m$, $\mathcal{R}_{F,n}[u_\ell,u_r]$ has a two waves $(u_\ell,\hat{\mathtt u}_\ell)$ and $(\hat{\mathtt u}_\ell,u_r)$, that are a S and a NS, respectively; as a consequence
\begin{align*}
\Delta\tv_v &= 2(v_\ell-\hat{\mathtt v}_\ell) > 0,& 
\Delta\hat{\Upsilon}_n &= -(v_\ell-\hat{\mathtt v}_\ell) -(\mathtt{w}_\ell-\mathtt{w}_m) < 0,
\\
\Delta\tv_w &= 0,&
\Delta\check{\Upsilon}_n &= 0,
\end{align*}
therefore $\Delta\sharp = 0$ and $\Delta\mathcal{T}_n = -2(\mathtt{w}_\ell-\mathtt{w}_m) < 0$.

\item[\textbf{CD-PT$\pmb{_F^+}$}]
$(u_\ell,u_m)$ is a CD and $(u_m,u_r)$ is a PT. 
In this case $\mathtt{w}_m < w^- \leq w_F = \hat{\mathtt w}_m < \mathtt{w}_\ell$, $v_\ell = v_m = \hat{\mathtt v}_m = V > v_r = \check{\mathtt v}_r > \hat{\mathtt v}_\ell$, $\mathtt{w}_m < w^- \leq \mathtt{w}_r < \check{\mathtt w}_r < \mathtt{w}_\ell = \hat{\mathtt w}_\ell$, $f(u_\ell) > f({\mathtt u}_*) > F \geq \max\{f(u_m) , f(u_r)\}$, $u_\ell \in \Omega_{\rm f}^+$, $u_m \in \Omega_{\rm f}^-$ and $u_r \in \Omega_{\rm c}$.
$\mathcal{R}_{F,n}[u_\ell,u_r]$ has three waves $(u_\ell,\hat{\mathtt u}_\ell)$, $(\hat{\mathtt u}_\ell,\check{\mathtt u}_r)$ and $(\check{\mathtt u}_r,u_r)$ that are a S, a NS and a CD, respectively. 
As a consequence
\begin{align*}
\Delta\tv_v &= 2(v_r-\hat{\mathtt v}_\ell) > 0,& 
\Delta\hat{\Upsilon}_n &= -(V-\hat{\mathtt v}_\ell) -(\mathtt{w}_\ell-w_F) < 0,
\\
\Delta\tv_w &= -2(\mathtt{w}_r-\mathtt{w}_m) < 0,&
\Delta\check{\Upsilon}_n &= 0,
\end{align*}
therefore $\Delta\sharp = 1$ and $\Delta\mathcal{T}_n < -2(V-v_r) -2(\mathtt{w}_\ell-w_F) < 0$.

\item[\textbf{NS-S$\pmb{_F^+}$}]
$(u_\ell,u_m)$ is a NS and $(u_m,u_r)$ is a S. 
In this case $v_m > v_r = \check{\mathtt v}_r > v_\ell$, $\mathtt{w}_\ell > \check{\mathtt w}_r > \mathtt{w}_m = \mathtt{w}_r \geq w_F \geq w^-$, $f({\mathtt u}_*) > F = f(u_\ell) = f(u_m) > f(u_r)$ and $u_\ell$, $u_m$, $u_r \in \Omega_{\rm c}$.
$\mathcal{R}_{F,n}[u_\ell,u_r]$ has two waves $(u_\ell , \check{\mathtt u}_r)$ and $(\check{\mathtt u}_r , u_r)$ that are a NS and a CD, respectively. 
As a consequence $\Delta\tv_v = -2(v_m-v_r) < 0 = \Delta\tv_w = \Delta\hat{\Upsilon}_n = \Delta\check{\Upsilon}_n$, therefore $\Delta\sharp = 0$ and $\Delta\mathcal{T}_n = -2(v_m-v_r) < 0$.

\item[\textbf{NS-RS$\pmb{_F^+}$}]
$(u_\ell,u_m)$ is a NS and $(u_m,u_r)$ is a RS. 
In this case $v_\ell < v_m = \check{\mathtt v}_m < v_r = \check{\mathtt v}_r$, $\mathtt{w}_\ell > \mathtt{w}_m =  \check{\mathtt w}_m = \mathtt{w}_r > \check{\mathtt w}_r$, $f(\mathtt{u}_*) > f(u_r) > F = f(u_\ell) = f(u_m)$ and $u_\ell$, $u_m$, $u_r \in \Omega_{\rm c}$.
$\mathcal{R}_{F,n}[u_\ell,u_r]$ has two waves $(u_\ell,\check{\mathtt u}_r)$ and $(\check{\mathtt u}_r,u_r)$ that are a NS and a CD, respectively.
As a consequence
\begin{align*}
\Delta\tv_v &= 0,& 
\Delta\hat{\Upsilon}_n &= 0,& 
\\
\Delta\tv_w &= 2(\mathtt{w}_r-\check{\mathtt w}_r) > 0,&
\Delta\check{\Upsilon}_n &= -(v_r-v_m) -(\mathtt{w}_r-\check{\mathtt w}_r) < 0,
\end{align*}
therefore $\Delta\sharp = 0$ and $\Delta\mathcal{T}_n = -2(v_r-v_m) < 0$.

\end{enumerate}

\noindent
If $F \in [0,f_{\rm c}^-)$, then one of the following cases occurs:

\begin{enumerate}[wide=0pt]

\item[\textbf{CD-S$\pmb{_F^-}$}]
$(u_\ell,u_m)$ is a CD and $(u_m,u_r)$ is a S. 
In this case $\hat{\mathtt v}_m \geq v_\ell = v_m > v_r = \check{\mathtt v}_r > \hat{\mathtt v}_\ell$, $\mathtt{w}_\ell = \hat{\mathtt w}_\ell > \check{\mathtt w}_r > \mathtt{w}_m = \hat{\mathtt w}_m = \mathtt{w}_r \geq w^-$, $f(u_\ell) > f({\mathtt u}_*) > F \geq f(u_m) > f(u_r)$ and $u_\ell$, $u_m$, $u_r \in \Omega_{\rm c}$.
$\mathcal{R}_{F,n}[u_\ell,u_r]$ has three waves $(u_\ell,\hat{\mathtt u}_\ell)$, $(\hat{\mathtt u}_\ell,\check{\mathtt u}_r)$ and $(\check{\mathtt u}_r,u_r)$, which are a S, a NS and a CD, respectively.
As a consequence
\begin{align*}
\Delta\tv_v &= 2(v_r-\hat{\mathtt v}_\ell) > 0,& 
\Delta\hat{\Upsilon}_n &= -(\hat{\mathtt v}_m-\hat{\mathtt v}_\ell) -(\mathtt{w}_\ell-\mathtt{w}_m) < 0,
\\
\Delta\tv_w &= 0,&
\Delta\check{\Upsilon}_n &= 0,
\end{align*}
therefore $\Delta\sharp = 1 $ and $\Delta\mathcal{T}_n = -2(\hat{\mathtt v}_m-v_r) -2(\mathtt{w}_\ell-\mathtt{w}_m) < 0$.

\item[\textbf{CD-RS$\pmb{_F^-}$}]
$(u_\ell,u_m)$ is a CD and $(u_m,u_r)$ is a RS. 
In this case $\hat{\mathtt v}_\ell \leq v_\ell = v_m \leq \hat{\mathtt v}_m$, $v_m = \check{\mathtt v}_m < v_r \leq \check{\mathtt v}_r$, $\mathtt{w}_\ell = \hat{\mathtt w}_\ell \geq \check{\mathtt w}_m \geq \mathtt{w}_m = \mathtt{w}_r \geq w^-$, $\mathtt{w}_\ell > \mathtt{w}_m$, $f(u_m) < f(u_r) \leq f(u_\ell) < f({\mathtt u}_*)$, $f(u_m) \leq F < f({\mathtt u}_*)$ and $u_\ell$, $u_m$, $u_r \in \Omega_{\rm c}$.
$\mathcal{R}_{F,n}[u_\ell,u_r]$ has at most three waves $(u_\ell,\hat{\mathtt u}_\ell)$, $(\hat{\mathtt u}_\ell,\check{\mathtt u}_r)$ and $(\check{\mathtt u}_r,u_r)$, which are a possibly null S, a NS and a possibly null CD or PT, respectively.
As a consequence
\begin{align*}
\Delta\tv_v &= 2(v_\ell-\hat{\mathtt v}_\ell) + 2(\check{\mathtt v}_r-v_r) \geq 0,& 
\Delta\hat{\Upsilon}_n &= -(\hat{\mathtt v}_m-\hat{\mathtt v}_\ell) -(\mathtt{w}_\ell-\mathtt{w}_m) < 0,
\\
\Delta\tv_w &=
\left\{
\begin{array}{@{}ll@{}}
2(\mathtt{w}_r-\check{\mathtt w}_r)&\text{if }f(u_m) = F
\\
0&\text{if }f(u_m) < F
\end{array}\right\}
\geq 0,&
\Delta\check{\Upsilon}_n &= -(\check{\mathtt v}_r-v_m) 
- \left\{
\begin{array}{@{}ll@{}}
(\mathtt{w}_r-\check{\mathtt w}_r)&\text{if }f(u_m) = F
\\
(\check{\mathtt w}_m-\check{\mathtt w}_r)&\text{if }f(u_m) < F
\end{array}\right\}
< 0,
\end{align*}
and therefore
\begin{align*}
\Delta\sharp &\in \{-1,0,1\},&
\Delta\mathcal{T}_n&=
-2(\hat{\mathtt v}_m-v_\ell) 
-2(v_r-v_m)
-2(\mathtt{w}_\ell-\mathtt{w}_m)
-\left\{
\begin{array}{@{}ll@{}}
0&\text{if }f(u_m) = F
\\
2(\check{\mathtt w}_m-\check{\mathtt w}_r)&\text{if }f(u_m) < F
\end{array}\right\}
<0.
\end{align*}

\item[\textbf{CD-NS$\pmb{_F^-}$}]
$(u_\ell,u_m)$ is a CD and $(u_m,u_r)$ is a NS. 
In this case $v_\ell = v_m = \hat{\mathtt v}_m < v_r $, $\mathtt{w}_r < \min\{\mathtt{w}_\ell , \mathtt{w}_m\}$, $\mathtt{w}_\ell = \hat{\mathtt w}_\ell$, $\mathtt{w}_m = \hat{\mathtt w}_m$, $f(u_m) = f(u_r) = F \neq f(u_\ell)$ and $u_\ell$, $u_m \in \Omega_{\rm c}$.
\\
If $f(u_\ell) < F$, then $v_r > \hat{\mathtt v}_\ell > v_\ell = v_m$, $\mathtt{w}_r < \mathtt{w}_\ell < \mathtt{w}_m$ and $\mathcal{R}_{F,n}[u_\ell,u_r]$ has a fan of RSs from $u_\ell$ to $\hat{\mathtt u}_\ell$ and a NS $(\hat{\mathtt u}_\ell,u_r)$; as a consequence $\Delta\tv_w = -2(\mathtt{w}_m-\mathtt{w}_\ell) < 0 = \Delta\tv_v  = \Delta\hat{\Upsilon}_n = \Delta\check{\Upsilon}_n$, therefore $\Delta\sharp \in [0,2^n-2]$ and $\Delta\mathcal{T}_n = -2(\mathtt{w}_m-\mathtt{w}_\ell) < 0$.
\\
If $f(u_\ell) > F$, then $v_r > v_\ell = v_m = \hat{\mathtt v}_m > \hat{\mathtt v}_\ell$, $\mathtt{w}_\ell > \mathtt{w}_m > \mathtt{w}_r$ and $\mathcal{R}_{F,n}[u_\ell,u_r]$ has a two waves $(u_\ell,\hat{\mathtt u}_\ell)$ and $(\hat{\mathtt u}_\ell,u_r)$, that are a S and a NS, respectively; as a consequence
\begin{align*}
\Delta\tv_v &= 2(v_\ell-\hat{\mathtt v}_\ell) > 0,& 
\Delta\hat{\Upsilon}_n &= -(v_\ell-\hat{\mathtt v}_\ell) -(\mathtt{w}_\ell-\mathtt{w}_m) < 0,
\\
\Delta\tv_w &= 0,&
\Delta\check{\Upsilon}_n &= 0,
\end{align*}
therefore $\Delta\sharp = 0$ and $\Delta\mathcal{T}_n = -2(\mathtt{w}_\ell-\mathtt{w}_m) < 0$.

\item[\textbf{CD-PT$\pmb{_F^-}$}]
$(u_\ell,u_m)$ is a CD and $(u_m,u_r)$ is a PT. 
In this case $v_\ell = v_m = \hat{\mathtt v}_m = V > v_r = \check{\mathtt v}_r > \hat{\mathtt v}_\ell$, $\mathtt{w}_\ell = \hat{\mathtt w}_\ell > \check{\mathtt w}_r \geq \mathtt{w}_r \geq w^- > w_F = \hat{\mathtt w}_m \geq \mathtt{w}_m$, $f(u_\ell) > f({\mathtt u}_*) > F \geq \max\{f(u_m) , f(u_r)\}$, $u_\ell \in \Omega_{\rm f}^+$, $u_m \in \Omega_{\rm f}^-$ and $u_r \in \Omega_{\rm c}$.
$\mathcal{R}_{F,n}[u_\ell,u_r]$ has at most three waves $(u_\ell,\hat{\mathtt u}_\ell)$, $(\hat{\mathtt u}_\ell,\check{\mathtt u}_r)$ and $(\check{\mathtt u}_r,u_r)$ that are a S, a NS and a possibly null CD, respectively. 
As a consequence
\begin{align*}
\Delta\tv_v &= 2(v_r-\hat{\mathtt v}_\ell) > 0,& 
\Delta\hat{\Upsilon}_n &= -(v_m-\hat{\mathtt v}_\ell) -(\mathtt{w}_\ell-w_F) < 0,
\\
\Delta\tv_w &= -2(\mathtt{w}_r-\mathtt{w}_m) < 0,&
\Delta\check{\Upsilon}_n &= 0,
\end{align*}
therefore $\Delta\sharp \in \{0,1\}$ and $\Delta\mathcal{T}_n = -2(v_m-v_r) -2(\mathtt{w}_r-\mathtt{w}_m) -2(\mathtt{w}_\ell-w_F) <0$.

\item[\textbf{NS-S$\pmb{_F^-}$}]
$(u_\ell,u_m)$ is a NS and $(u_m,u_r)$ is a S. 
In this case $v_m > v_r = \check{\mathtt v}_r > v_\ell$, $\mathtt{w}_\ell > \check{\mathtt w}_r > \mathtt{w}_m = \mathtt{w}_r \geq w^-$, $f(u_\ell) = f(u_m) = F > f(u_r)$ and $u_\ell$, $u_m$, $u_r \in \Omega_{\rm c}$.
$\mathcal{R}_{F,n}[u_\ell,u_r]$ has two waves $(u_\ell , \check{\mathtt u}_r)$ and $(\check{\mathtt u}_r , u_r)$ that are a NS and a CD, respectively. 
As a consequence $\Delta\tv_v = -2(v_m-v_r) < 0 = \Delta\tv_w = \Delta\hat{\Upsilon}_n = \Delta\check{\Upsilon}_n$, therefore $\Delta\sharp = 0$ and $\Delta\mathcal{T}_n = -2(v_m-v_r) < 0$.

\item[\textbf{NS-RS$\pmb{_F^-}$}]
$(u_\ell,u_m)$ is a NS and $(u_m,u_r)$ is a RS. 
In this case $v_\ell < v_m = \check{\mathtt v}_m < v_r \leq \check{\mathtt v}_r$, $\mathtt{w}_\ell > \mathtt{w}_m = \check{\mathtt w}_m = \mathtt{w}_r > \check{\mathtt w}_r$, $f(u_r) > F = f(u_\ell) = f(u_m)$ and $u_\ell$, $u_m$, $u_r \in \Omega_{\rm c}$.
$\mathcal{R}_{F,n}[u_\ell,u_r]$ has two waves $(u_\ell,\check{\mathtt u}_r)$ and $(\check{\mathtt u}_r,u_r)$ that are a NS and either a PT or a CD, respectively.
As a consequence
\begin{align*}
\Delta\tv_v &= 2(\check{\mathtt v}_r-v_r) \geq 0,& 
\Delta\hat{\Upsilon}_n &= 0,
\\
\Delta\tv_w &= 2(\mathtt{w}_r-\check{\mathtt w}_r) > 0,&
\Delta\check{\Upsilon}_n &= -(\check{\mathtt v}_r-v_m) -(\mathtt{w}_r-\check{\mathtt w}_r) < 0,
\end{align*}
therefore $\Delta\sharp = 0$ and $\Delta\mathcal{T}_n = -2(v_r-v_m) < 0$.
Notice that $\check{\mathtt v}_r > v_r$ if and only if $\mathtt{w}_m = \mathtt{w}_r = w^-$ and $\check{\mathtt v}_r = V > v_r > v_m = v_F^+$.

\item[\textbf{NS-PT$\pmb{_F^-}$}]
$(u_\ell,u_m)$ is a NS and $(u_m,u_r)$ is a PT. 
In this case $v_m = V > v_r = \check{\mathtt v}_r > v_\ell$, $\mathtt{w}_\ell > \check{\mathtt w}_r > \mathtt{w}_r = w^- > \mathtt{w}_m$, $f({\mathtt u}_*) > f(u_\ell) = f(u_m) = F > f(u_r)$, $u_\ell$, $u_r \in \Omega_{\rm c}$ and $u_m \in \Omega_{\rm f}^-$.
$\mathcal{R}_{F,n}[u_\ell,u_r]$ has two waves $(u_\ell,\check{\mathtt u}_r)$ and $(\check{\mathtt u}_r,u_r)$ that are a NS and a CD, respectively.
As a consequence
\begin{align*}
\Delta\tv_v &= -2(V-v_r) < 0,& 
\Delta\hat{\Upsilon}_n &= 0,
\\
\Delta\tv_w &= -2(w^--\mathtt{w}_m) < 0,&
\Delta\check{\Upsilon}_n &= 0,
\end{align*}
therefore $\Delta\sharp = 0$ and $\Delta\mathcal{T}_n = -2(V-v_r) -2(w^--\mathtt{w}_m) < 0$.

\end{enumerate}

\end{itemize}
\noindent
This concludes the proof.
\end{proof}

\begin{table}
\begin{center}
\renewcommand{\arraystretch}{1.25}
\begin{tabular}{|c|l|c|c|}
\hline  
Interaction&Result&$\Delta\sharp$&$\Delta\mathcal{T}_n$\\
\hline 
CD$_F^+$&(S,NS,CD), (S,NS)&$\in\{1,2\}$&$<0$\\
\hline
RS$_F^+$&(NS,CD)&$=1$&$<0$\\
\hline
CD$_F^-$&(S,NS,CD), (S,NS), (PT,NS,CD), (PT,NS)&$\in\{1,2\}$&$<0$\\
\hline
RS$_F^-$&(NS,PT), (NS,CD)&$=1$&$<0$\\
\hline
CD-S&(S,CD), (PT,CD), PT&$\leq0$&$=0$\\
\hline
CD-RS&(RS,CD)&$=0$&$=0$\\
\hline
CD-PT&(PT,CD), PT, (S,CD), S&$\leq0$&$\leq0$\\
\hline
S-S&S&$<0$&$=0$\\
\hline
S-RS&S&$<0$&$<0$\\
\hline
RS-S&S&$<0$&$<0$\\
\hline
PT-S&PT&$<0$&$=0$\\
\hline
PT-RS&PT, CD&$<0$&$<0$\\
\hline
CD-S$_0$&(S,CD), (PT,CD), PT&$\leq0$&$\leq0$\\
\hline
CD-RS$_0$&(RS,CD)&$=0$&$\leq0$\\
\hline
CD-NS$_0$&(RSs,CD), RSs&$\in[-1,2^n-1]$&$<0$\\
\hline
CD-PT$_0$&(PT,CD), PT, (S,CD), S&$\leq0$&$\leq0$\\
\hline
S-S$_0$&S&$<0$&$=0$\\
\hline
S-RS$_0$&S&$<0$&$<0$\\
\hline
RS-S$_0$&S&$<0$&$<0$\\
\hline
NS-S$_0$&(S,CD), CD&$\leq0$&$<0$\\
\hline
NS-PT$_0$&(S,CD), S, CD&$\leq0$&$<0$\\
\hline
PT-S$_0$&PT&$<0$&$=0$\\
\hline
PT-RS$_0$&PT, CD&$<0$&$<0$\\
\hline
PT-NS$_0$&CD, PT&$<0$&$<0$\\
\hline
CD-S$_F^+$&(S,NS,CD)&$=1$&$<0$\\
\hline
CD-RS$_F^+$&(S,NS,CD), (NS,CD), (S,NS), NS&$\in\{-1,0,1\}$&$<0$\\
\hline
CD-NS$_F^+$&(RSs,NS), (S,NS)&$\in[0,2^n-2]$&$<0$\\
\hline
CD-PT$_F^+$&(S,NS,CD)&$=1$&$<0$\\
\hline
NS-S$_F^+$&(NS,CD)&$=0$&$<0$\\
\hline
NS-RS$_F^+$&(NS,CD)&$=0$&$<0$\\
\hline
CD-S$_F^-$&(S,NS,CD)&$=1$&$<0$\\
\hline
CD-RS$_F^-$&(S,NS,CD), (S,NS,PT), (NS,CD), (NS,PT), (S,NS), NS&$\in\{-1,0,1\}$&$<0$\\
\hline
CD-NS$_F^-$&(RSs,NS), (S,NS)&$\in[0,2^n-2]$&$<0$\\
\hline
CD-PT$_F^-$&(S,NS,CD), (S,NS)&$\in\{0,1\}$&$<0$\\
\hline
NS-S$_F^-$&(NS,CD)&$=0$&$<0$\\
\hline
NS-RS$_F^-$&(NS,PT), (NS,CD)&$=0$&$<0$\\
\hline
NS-PT$_F^-$&(NS,CD)&$=0$&$<0$\\
\hline 
\end{tabular}
\end{center} 
\caption{An overview of the interactions considered in the proof of Proposition~\ref{p:interest}.}
\label{tab:inter}
\end{table}

In Table~\ref{tab:inter} we collect the most relevant possible interactions considered in the proof of Proposition~\ref{p:interest} and list the corresponding possible results in terms of wave types, $\Delta\sharp$ and $\Delta\mathcal{T}_n$.

Beside the bound on the number of wave-fronts proved in Proposition~\ref{p:interest}, we need to bound also the number of interactions.
This is the aim of the next proposition, which together with Proposition~\ref{p:interest} ensure the global existence of $u_n$.
We underline that for any interaction $\Delta\sharp\leq 2^n-1$, see Table~\ref{tab:inter}.
\begin{proposition}\label{p:inter}
For any fixed $n \in \N$ sufficiently large and $u^o_n \in \mathbf{PC}(\R;\mathcal{G}_n)$, we have that the number of interactions in $(0,\infty)$ is bounded.
In particular $u_n$ is globally defined.
\end{proposition}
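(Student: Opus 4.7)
My plan is to exploit the decrease of the Temple-like functional $\mathcal{T}_n$ from Proposition~\ref{p:interest} to count all interactions at which the number of waves strictly increases, and then to control the remaining non-creation interactions combinatorially, using the uniform wave-count bound that this yields.

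First, by Proposition~\ref{p:interest}(a), the functional $\mathcal{T}_n(t)$ is non-increasing, non-negative, finite at $t = 0_+$ thanks to \eqref{e:apprini} together with hypotheses \ref{H.1}--\ref{H.2}, and strictly decreases by at least $\varepsilon_n > 0$ at each interaction with $\Delta\sharp > 0$. Hence the total number $I_+$ of such ``creation'' interactions satisfies
\[
I_+ \leq \mathcal{T}_n(0_+)/\varepsilon_n < \infty.
\]
Since $\Delta\sharp \leq 2^n - 1$ at any interaction and $\Delta\sharp \leq 0$ at any non-creation interaction, this yields the uniform bound $\sharp(t) \leq \sharp(0_+) + (2^n - 1)\,I_+ =: N$ for every $t > 0$.

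Next, I partition the time axis into the at most $I_+ + 1$ open intervals between consecutive creation events and bound the number of non-creation interactions within each one. Inside such a window only $\Delta\sharp \leq 0$ interactions occur: those with $\Delta\sharp < 0$ strictly reduce $\sharp(t)$, so at most $N$ of them can happen per window. The $\Delta\sharp = 0$ interactions listed in Table~\ref{tab:inter} (CD-S, CD-RS, CD-PT, and their NS-variants) are of ``swap type'', in which two wave-fronts exchange positions and subsequently move apart in opposite directions. By labelling each wave-front persistently by its characteristic family (inherited through swaps), each pair of wave-fronts can undergo at most one swap per window, giving at most $\binom{N}{2}$ such events per window. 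Summing over the $I_+ + 1$ windows yields a finite global bound on the total number of interactions in $(0,\infty)$, and hence $u_n$ is globally defined.

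The main obstacle is the claim that each pair of wave-fronts can swap at most once between two consecutive creation events. This relies on the sign structure of the wave speeds (CDs have non-negative speed, Ss and RSs have negative speed, NSs are stationary at $x = 0$, and PTs have speeds strictly separated from the adjacent waves' speeds by the Rankine--Hugoniot condition), together with an inheritance rule for labelling outgoing wave-fronts at each interaction. This device is standard in the wave-front tracking literature, see~\cite{bressanbook, HoldenRisebroWFT}, but requires verification in the present two-phase constrained setting through the detailed case analysis carried out in the proof of Proposition~\ref{p:interest}.
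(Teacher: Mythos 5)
Your overall strategy coincides with the paper's: use the decrease of $\mathcal{T}_n$ to bound the number of wave-creating interactions (hence the number of fronts), and then dispose of the remaining interactions by a crossing/pairing argument. The difference is in the bookkeeping, and it is exactly there that a genuine gap appears.

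The gap is your blanket claim that every $\Delta\sharp=0$ interaction is of ``swap type, in which two wave-fronts exchange positions and subsequently move apart in opposite directions.'' This is false for the $\Delta\sharp=0$ interactions at $x=0$ that involve the stationary non-classical shock: in CD-NS$_F^\pm$ (with $f(u_\ell)>F$) the incoming CD is \emph{reflected} as a S while the NS stays put at $x=0$; in NS-S$_F^\pm$, NS-RS$_F^\pm$, NS-PT$_F^-$ the incoming left-moving front re-emerges as a CD on the \emph{same} side it came from; in NS-S$_0$ and NS-PT$_0$ the NS disappears altogether. No positions are exchanged, so the ``each pair swaps at most once'' count of $\binom{N}{2}$ does not follow from the mechanism you invoke, and one would have to argue separately that a reflected front can never return to $x=0$ (which requires tracking its identity through subsequent interactions, e.g.\ through PT-S where a left-moving S can in principle be absorbed into a right-moving PT). The clean fix — and what the paper actually does — is to observe that all of these NS-involving neutral-$\sharp$ interactions have $\Delta\mathcal{T}_n<0$, and since every decrement of $\mathcal{T}_n$ is a sum of differences of distinct grid values it is at least $\varepsilon_n$; the paper therefore works with the single functional $2^n\,\mathcal{T}_n/\varepsilon_n+\sharp$, which strictly decreases by at least $1$ at \emph{every} interaction except those in \eqref{e:Korn}, and \eqref{e:Korn} consists only of genuine crossings of a CD with a slower S, RS or PT. For those the pairing argument does work, but the fact that makes it work should be stated explicitly: a CD has speed equal to the common velocity of its two states, so no wave can overtake a CD from behind (a PT behind a CD is never faster than it, and two adjacent CDs have equal speeds and never meet); hence once a CD has crossed a given front, that pair never interacts again. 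Your appeal to ``the sign structure of the wave speeds'' gestures at this but does not isolate it, and it is precisely the non-trivial model-specific input. The remainder of your argument (the bound $I_+\leq\mathcal{T}_n(0_+)/\varepsilon_n$, the wave-count bound $N$, and the at most $N$ annihilations per window) is correct.
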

\begin{proof}
From what we already show in the proof of Proposition~\ref{p:interest}, see Table~\ref{tab:inter}, we deduce that
\[
t\mapsto 2^n \, \dfrac{\mathcal{T}_n(t)}{\varepsilon_n} + \sharp(t)
\]
strictly decreases after any interaction, except the following cases.
\begin{equation}\label{e:Korn}
\begin{minipage}{.85\textwidth}
A CD $(u_\ell,u_m)$ interacts with $(u_m,u_r)$ and one of the following conditions is satisfied:
\begin{itemize}[wide=0pt]
\item
$(u_m,u_r)$ is a S and $\mathtt{w}_\ell = w^--1$;
\item
$(u_m,u_r)$ is a S and $w^--1<\mathtt{w}_\ell \leq w^- = \mathtt{w}_r$;
\item
$(u_m,u_r)$ is a RS;
\item
$(u_m,u_r)$ is a PT and $\mathtt{w}_\ell > w^-$.
\end{itemize}
\end{minipage}
\end{equation}
For this reason it remains to bound the number of only the above type of interactions.
We observe that the number of waves of $u_n$ do not change after interactions as in \eqref{e:Korn}.
This implies that the number of waves is uniformly bounded.
We also observe that any interaction as in \eqref{e:Korn} has exactly one incoming CD and exactly one outgoing CD.
Since no wave can reach any CD from the left (and then possibly have with it an interaction as in \eqref{e:Korn}), we have that as long as a CD remains a CD (possible further interactions involving it have to be taken into account), it can interact only once with another wave W (or with waves generated by \emph{further} interactions involving W), moreover in this case W is slower then such CD and is not another CD.
Since furthermore we already know that the number of waves is uniformly bounded, there can be only finitely many interactions involving CDs.
It is therefore now clear that also the number of the interactions described in \eqref{e:Korn} is bounded.
\end{proof}

\subsection{Convergence}\label{s:conv}

We first observe that
\[
|\rho_\ell-\rho_r| \leq L \, \bigl( |v_\ell-v_r| + |\mathtt{w}_\ell-\mathtt{w}_r| \bigr)
\]
where $L \doteq \max \{ \rho^-, \|1/p'\|_{\L\infty([p^{-1}(w^-),p^{-1}(w^+)];\R)} \}$ because
\[
\rho_{\ell,r} =
\begin{cases}
p^{-1}(\mathtt{w}_{\ell,r}-v_{\ell,r})&\text{if }\mathtt{w}_{\ell,r}\in[w^-,w^+],
\\
(\mathtt{w}_{\ell,r}+1-w^-) \, \rho^-&\text{if }\mathtt{w}_{\ell,r}\in[w^--1,w^-).
\end{cases}
\]
As a consequence $\tv(\rho) \leq L \, (\tv(v)+\tv(\mathtt{w}))$, hence
\[
\tv(u) \leq (1+L) \, \bigl(\tv(v)+\tv(\mathtt{w})\bigr).
\]
Moreover, by Proposition~\ref{p:interest} and \eqref{e:apprini} we have that for any $t>0$
\[
\tv\bigl(v_n(t,\cdot)\bigr) + \tv\bigl(\mathtt{w}_n(t,\cdot)\bigr) \leq 
\mathcal{T}_n(t) \leq \mathcal{T}_n(0) \leq 
\tv(v^o) + \tv(\mathtt{w}^o) + 2 \, C \, \Bigl( \hat{\Upsilon}(u^o) + \check{\Upsilon}(u^o) \Bigr).
\]
As a consequence $\tv(u_n)$ is bounded by
\begin{equation}
C^o_F \doteq
(1+L) \, \Bigl[ \tv(v^o) + \tv(\mathtt{w}^o)  + 2 \, C \, \Bigl( \hat{\Upsilon}(u^o) + \check{\Upsilon}(u^o) \Bigr) \Bigr].
\label{e:KingCrimson}
\end{equation}

Since $u^o_n$ takes values in $\Omega$, for any $t>0$ we have that also $u_n(t,\cdot)$ takes values in $\Omega$, hence
\[
\|u_n(t,\cdot)\|_{\L\infty(\R;\Omega)} \leq R+V.
\]
Moreover
\begin{equation}\label{e:StevenWilson}
\|u_n(t,\cdot) - u_n(s,\cdot)\|_{\L1(\R;\Omega)} \leq L^o_F \, |t - s|,
\end{equation}
with $L^o_F \doteq C^o_F \, \max\{V, R\, p'(R) \}$.
Indeed, if no interaction occurs for times between $t$ and $s$, then
\begin{align*}
\|u_n(t,\cdot) - u_n(s,\cdot)\|_{\L1(\R;\Omega)} \leq &\
\sum_{i\in\mathsf{D}(t)}
\Bigl|(t - s) \, \dot{\delta}_n^i(t) \, \Bigl(\rho_n\bigl(t,\delta_n^i(t)_-\bigr)-\rho_n\bigl(t,\delta_n^i(t)_+\bigr)\Bigr) \Bigr|
\\&\
+\sum_{i\in\mathsf{D}(t)}
\Bigl|(t - s) \, \dot{\delta}_n^i(t) \, \Bigl(v_n\bigl(t,\delta_n^i(t)_-\bigr)-v_n\bigl(t,\delta_n^i(t)_+\bigr)\Bigr) \Bigr|
 \leq L^o_F \, |t - s|,
\end{align*}
where $\delta_n^i(t) \in \R$, $i\in\mathsf{D}(t)\subset\N$, are the positions of the discontinuities of $u_n(t,\cdot)$.
The case when one or more interactions take place for times between $t$ and $s$ is similar, because by the finite speed of propagation of the waves the map $t \mapsto u_n(t,\cdot)$ is $\L1$-continuous across interaction times.

Thus, by applying Helly's Theorem, the approximate solutions $(u_n)_n$ converge (up to a subsequence) in $\Lloc1(\R_+\times \R; \Omega)$ to a function $u \in \L\infty(\R_+ ;\BV(\R ; \Omega)) \cap \C0(\R_+; \Lloc1(\R ; \Omega))$ and the limit satisfies the estimates in \eqref{e:estimates}. 

\begin{proposition}\label{p:JacobCollier}
Let $u^o \in \L1\cap\BV(\R;\Omega)$ and $F\in [0,f_{\rm c}^+]$ satisfy \ref{H.1} or \ref{H.2}.
If $u$ is a limit of the approximate solutions $(u_n)_n$ constructed in Section~\ref{s:approxsolR}, then $u$ is a solution to constrained Cauchy problem \eqref{eq:system}, \eqref{eq:initdat}, \eqref{eq:const} in the sense of Definition~\ref{def:solconstmod}.
\end{proposition}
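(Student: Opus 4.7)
The plan is to verify conditions \ref{CS1}--\ref{CS4} of Definition~\ref{def:solconstmod} by passing to the limit $n\to\infty$ in the corresponding relations satisfied by the approximate solutions $u_n$. Condition \ref{CS1} is immediate: $u_n^o\to u^o$ in $\Lloc1$ from \eqref{e:apprini} together with the uniform time-continuity \eqref{e:StevenWilson}, both of which survive the limit, yield $u(0,\cdot)=u^o$ almost everywhere. For \ref{CS2}, I would exploit the fact that every discontinuity of $u_n$ (classical or non-classical, located at $x=0$ or away) satisfies the first Rankine--Hugoniot jump condition by direct inspection in the proof of Proposition~\ref{p:interest}. Summing this over fronts against a test $\phi\in\Cc\infty((0,\infty)\times\R;\R)$ and integrating by parts gives \eqref{e:AdrianBelew0} for $u_n$, and dominated convergence -- using $u_n\to u$ in $\Lloc1$ and the uniform bound $\|u_n\|_{\L\infty}\leq R+V$ -- transfers it to $u$. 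The second Rankine--Hugoniot condition fails only at the stationary non-classical shock located at $x=0$, so imposing $\phi(\cdot,0)\equiv 0$ eliminates precisely that contribution and yields \eqref{e:AdrianBelew1} in the limit.

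For \ref{CS3}, the entropy pair $(\mathtt{E}^k,\mathtt{Q}^k)$, adapted from~\cite{AndreianovDonadelloRosiniM3ASS2016}, is by design dissipated by the classical shocks, contact discontinuities and phase transitions of $u_n$; the only positive contributions come from the approximating rarefaction fans introduced by $\mathcal{R}_n$, each individual jump of a fan having $(v,w)$-strength of order $\varepsilon_n$. Since the number of fronts of $u_n(t,\cdot)$ is uniformly bounded in $n$ and $t$ by Propositions~\ref{p:interest} and~\ref{p:inter}, a direct computation of the second-order entropy defect of a weak rarefaction shock shows that the cumulative entropy error over any bounded space-time region vanishes as $n\to\infty$. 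Non-classical shocks sit on $\{x=0\}$ and are excluded by the requirement $\phi(\cdot,0)\equiv 0$, so \eqref{e:Deftones} survives the passage to the limit.

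Finally, \ref{CS4} holds for $u_n$ almost everywhere in $t$ by construction of $\mathcal{R}_{F,n}$. To transfer this to $u$ I would test the first equation of \ref{CS2} with $\phi(t,x)=\psi(t)\,\chi_\eta(x)$, where $\psi\in\Cc\infty((0,\infty);\R_+)$ and $\chi_\eta$ smoothly approximates $\mathbf{1}_{(-\eta,0)}$. Letting $\eta\to 0$ produces $\int_0^\infty\psi(t)\,f(u(t,0_-))\,\d t$ in the limit, which is bounded by $F\int_0^\infty\psi(t)\,\d t$ via the analogous identity for $u_n$ combined with the uniform $\BV$-in-space bound $C_F^o$ from \eqref{e:KingCrimson}; the trace from the right is obtained by the symmetric choice of $\chi_\eta$.

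The main obstacle I foresee is \ref{CS3}: one must verify the vanishing entropy-error bound uniformly across all the interaction types catalogued in Table~\ref{tab:inter}, in particular those such as \textbf{CD-NS}$_0$ and \textbf{CD-NS}$_F^\pm$ where the constrained solver itself spawns new rarefaction fans with up to $2^n-1$ fronts at the constraint location, and the non-classical shocks already present there do not contribute any false entropy signal to the integrand outside of $\{x=0\}$. Once this bound is in place, the remaining passages to the limit are routine dominated-convergence arguments based on the Helly-type compactness and the a priori estimates already collected.
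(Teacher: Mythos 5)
Your overall architecture (verify \ref{CS1}--\ref{CS4} one by one, using the Green--Gauss formula to reduce each integral identity to a sum of jump defects over the fronts of $u_n$, then pass to the limit by $\Lloc1$-convergence and the uniform $\L\infty$/$\BV$ bounds) coincides with the paper's, and your treatment of \ref{CS1}, \ref{CS2} and \ref{CS4} is essentially the argument given there. The gap is in \ref{CS3}, which you correctly identify as the main obstacle but do not actually resolve. First, your premise that ``the number of fronts of $u_n(t,\cdot)$ is uniformly bounded in $n$ and $t$'' is false: Propositions~\ref{p:interest} and~\ref{p:inter} bound the number of fronts for each \emph{fixed} $n$, but the bound degenerates as $n\to\infty$ (a single rarefaction is split into up to $\sim 2^n$ rarefaction shocks, and interactions such as \textbf{CD-NS}$_0$ create up to $2^n-1$ new fronts, as recorded in Table~\ref{tab:inter}). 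Second, the quantitative input you invoke --- a ``second-order entropy defect of a weak rarefaction shock'' --- is asserted rather than proved, and it is not what the paper's computation delivers: for a rarefaction shock with $v_n(t,x_-)<k\leq v_n(t,x_+)$ the paper only obtains the first-order lower bound $\dot{\delta}_n^i\,\Delta\mathtt{E}_n^{k,i}-\Delta\mathtt{Q}_n^{k,i}\geq -\tfrac{2}{\rho^-}\,p^{-1}(\mathtt{w}_\pm)\,p'(p^{-1}(\mathtt{w}_\pm))\,[\rho_n(t,x_-)-\rho_n(t,x_+)]$. Summing a first-order defect over a family of fronts whose total strength is only controlled by $C^o_F$ gives an $O(1)$ error, not a vanishing one, so your conclusion does not follow from your stated premises. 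Even if one establishes a genuinely quadratic defect per front, one still needs to replace the (false) uniform front count by the summability estimate $\sum_i s_i^2\leq(\max_i s_i)\sum_i s_i\leq \varepsilon_n\,C^o_F$; none of this is in your proposal.

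What is entirely missing is the mechanism the paper actually uses to close \ref{CS3}: the observation that, for fixed $k$, the defect of a rarefaction shock is nonzero only when its velocity jump straddles $k$, i.e.\ $v_n(t,x_-)<k\leq v_n(t,x_+)$, an interval of length at most the grid mesh $\mathcal{E}_n$. The paper then introduces, for each $h>0$, a finite set $\mathcal{K}_h\subset(a,b)$ of values of $k$ whose mutual distances exceed $\mathcal{E}_n$, so that each rarefaction shock contributes a (first-order, negative) defect to at most one $k\in\mathcal{K}_h$; summing over $k\in\mathcal{K}_h$ and using the uniform total variation bound, a pigeonhole argument produces some $k\in\mathcal{K}_h$ whose cumulative defect is bounded below by $-1/h$. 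Density of $\cup_h \mathcal{K}_h$, continuity of the defect functional in $k$, and the arbitrariness of $h$ then yield \eqref{e:PorcupineTree} for every $k\in[0,V]$. Without either this averaging-in-$k$ argument or a rigorously established quadratic defect estimate combined with the correct summability bound, the passage to the limit in the entropy inequality is not justified.
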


\begin{proof}
We consider separately the conditions listed in Definition~\ref{def:solconstmod}.
\begin{enumerate}[label={\textbf{(CS.\arabic*)}},wide=0pt]\setlength{\itemsep}{0cm}%

\item[\ref{CS1}] 
The initial condition \eqref{eq:initdat} holds by \eqref{e:estimates}, \eqref{e:StevenWilson} and the $\Lloc1$-convergence of $u_n$ to $u$.

\item[\ref{CS2}] 
We prove now \eqref{e:AdrianBelew0}, that is for any test function $\phi \in \Cc\infty((0,\infty)\times\R; \R)$ we have
\[
\int_0^\infty
\int_{\R} 
\Bigl( \rho \, \phi_t
+ f(u)\, \phi_x \Bigr) \,
\d x \, \d t
=0.
\]
Choose $T>0$ such that $\phi(t,x) = 0$ whenever $t \geq T$.
Since $u_n$ is uniformly bounded and $f$ is uniformly continuous on bounded sets, it is sufficient to prove that
\begin{equation}\label{e:BlindMelon}
\int_0^T
\int_{\R} 
\Bigl( \rho_n \, \phi_t
+ f(u_n)\, \phi_x \Bigr) \,
\d x \, \d t
\to0.
\end{equation}
By the Green-Gauss formula the double integral above can be written as
\[
\int_0^T \sum_{i\in\mathsf{D}(t)}
\Bigl( \dot{\delta}_n^i(t) \, \Delta \rho_n^i(t) - \Delta f_n^i(t) \Bigr) \, \phi\bigl(t,\delta_n^i(t)\bigr) \, \d t ,
\]
where
\begin{align*}
\Delta \rho_n^i(t) &\doteq 
\rho_n\bigl(t,\delta_n^i(t)_+\bigr) - \rho_n\bigl(t,\delta_n^i(t)_-\bigr),&
\Delta f_n^i(t) &\doteq 
f\Bigl(u_n\bigl(t,\delta_n^i(t)_+\bigr)\Bigr) - f\Bigl(u_n\bigl(t,\delta_n^i(t)_-\bigr)\Bigr).
\end{align*}
By construction any discontinuity of $u_n(t,\cdot)$ satisfies the first Rankine-Hugoniot condition \eqref{e:RH1}, therefore
\[
\dot{\delta}_n^i(t) \, \Delta \rho_n^i(t) - \Delta f_n^i(t)=0,\qquad i\in\mathsf{D}(t),
\]
and \eqref{e:BlindMelon} is trivial.
 
The proof of \eqref{e:AdrianBelew1} is analogous because by construction any discontinuity of $u_n(t,\cdot)$ away from $x=0$ satisfies also the second Rankine-Hugoniot condition \eqref{e:RH2}.

\item[\ref{CS3}] 
We prove now \eqref{e:Deftones}, namely that for any $k \in [0,V]$ and $\phi \in \Cc\infty((0,\infty)\times \R;\R)$ such that $\phi(\cdot,0)\equiv0$ and $\phi \geq0$ we have
\[
\int_0^\infty
\int_{\R} \Bigl(\mathtt{E}^k(u) \, \phi_t + \mathtt{Q}^k(u) \, \phi_x\Bigr) \, \d x \, \d t \geq 0,
\]
where
\begin{align*}
\mathtt{E}^k(u) &\doteq \begin{cases}
0 & \hbox{if } v \geq k,\\
\dfrac{\rho }{p^{-1}\bigl(\mathtt{W}(u)-k\bigr)} - 1 & \hbox{if } v < k,
\end{cases}&
\mathtt{Q}^k(u) &\doteq \begin{cases}
0 & \hbox{if } v \geq k,\\
\dfrac{f(u)}{p^{-1}\bigl(\mathtt{W}(u)-k\bigr)} - k & \hbox{if } v < k.
\end{cases}
\end{align*}
Choose $T>0$ such that $\phi(t,x) = 0$ whenever $t\geq T$.
By the a.e.~convergence of $u_n$ to $u$ and the uniform continuity of $\mathtt{E}^k$ and $\mathtt{Q}^k$, it is sufficient to prove that
\begin{equation}\label{e:PorcupineTree}
\liminf_{n\to\infty} \int_0^T
\int_{\R} \Bigl(\mathtt{E}^k(u_n) \, \phi_t + \mathtt{Q}^k(u_n) \, \phi_x\Bigr) \, \d x \, \d t \geq 0.
\end{equation}
By the Green-Gauss formula the double integral above can be written as
\[
\int_0^T \sum_{i\in\mathsf{D}(t)}
\Bigl( \dot{\delta}_n^i(t) \, \Delta\mathtt{E}_n^{k,i}(t) - \Delta\mathtt{Q}_n^{k,i}(t) \Bigr) \, \phi\bigl(t,\delta_n^i(t)\bigr) \, \d t ,
\]
where
\begin{align*}
\Delta\mathtt{E}_n^{k,i}(t) &\doteq 
\mathtt{E}^k\Bigl(u_n\bigl(t,\delta_n^i(t)_+\bigr)\Bigr)
-\mathtt{E}^k\Bigl(u_n\bigl(t,\delta_n^i(t)_-\bigr)\Bigr),&
\Delta\mathtt{Q}_n^{k,i}(t) &\doteq 
\mathtt{Q}^k\Bigl(u_n\bigl(t,\delta_n^i(t)_+\bigr)\Bigr)
-\mathtt{Q}^k\Bigl(u_n\bigl(t,\delta_n^i(t)_-\bigr)\Bigr).
\end{align*}
To estimate the above integral we have to distinguish the following cases.

\begin{itemize}[wide=0pt]

\item
If the $i$th discontinuity is a PT, then we let $x \doteq \delta_n^i(t)$ and observe that
\begin{align*}
\rho_n(t,x_-) &< \min\bigl\{\rho_n(t,x_+),p^{-1}(w^--k)\bigr\},&
\dot{\delta}_n^i(t) &= \Lambda\bigl(u_n(t,x_-),u_n(t,x_+)\bigr),
\\
v_n(t,x_-) &= V > v_n(t,x_+),&
\mathtt{W}\bigl(u_n(t,x_-)\bigr) &= w^- \leq \mathtt{w}\bigl(u_n(t,x_+)\bigr) = \mathtt{W}\bigl(u_n(t,x_+)\bigr),
\end{align*}
hence
\begin{align*}
\Delta\mathtt{E}_n^{k,i}(t) &=
\begin{cases}
\dfrac{\rho_n(t,x_+)}{\rho_{n,+}^k} - 1
&\text{if }v_n(t,x_+) < k \leq V,
\\
0
&\text{if }k \leq v_n(t,x_+),
\end{cases}
\\
-\Delta\mathtt{Q}_n^{k,i}(t) &=
\begin{cases}
k - \dfrac{f\bigl(u_n(t,x_+)\bigr)}{\rho_{n,+}^k}
&\text{if }v_n(t,x_+) < k \leq V,
\\
0
&\text{if }k \leq v_n(t,x_+),
\end{cases}
\end{align*}
where $\rho_{n,+}^k \doteq p^{-1}(\mathtt{w}(u_n(t,x_+))-k)$.
If  $v_n(t,x_+) < k \leq V$, then
\begin{align*}
&\
\dot{\delta}_n^i(t) \, \Delta\mathtt{E}_n^{k,i}(t) - \Delta\mathtt{Q}_n^{k,i}(t)
\\=&\ 
\Lambda\bigl(u_n(t,x_-),u_n(t,x_+)\bigr)
\left[
\dfrac{\rho_n(t,x_+)}{\rho_{n,+}^k} - 1
\right]
+ k - \dfrac{f\bigl(u_n(t,x_+)\bigr)}{\rho_{n,+}^k}
\\=&\
\underbrace{\left[
\dfrac{\rho_n(t,x_+)}{\rho_{n,+}^k} - 1
\right]}_{>0}
\,
\underbrace{\left[\vphantom{\dfrac{\rho_n(t,x_+)}{\rho_{n,+}^k}}
\Lambda\bigl(u_n(t,x_-),u_n(t,x_+)\bigr)
- \Lambda\bigl((\rho_{n,+}^k,k),u_n(t,x_+)\bigr) \right]}_{>0}
> 0.
\end{align*}

\item
If the $i$th discontinuity is a CD, then we let $x \doteq \delta_n^i(t)$ and observe that $\dot{\delta}_n^i(t) = v_n(t,x_-) = v_n(t,x_+)$ implies that $\dot{\delta}_n^i(t) \, \Delta\mathtt{E}_n^{k,i}(t) - \Delta\mathtt{Q}_n^{k,i}(t) = 0$.

\item
If the $i$th discontinuity is a S, then we let $x \doteq \delta_n^i(t)$ and observe that 
\begin{align*}
\rho_n(t,x_-) &< \rho_n(t,x_+),&
f\bigl(u_n(t,x_-)\bigr) &> f\bigl(u_n(t,x_+)\bigr),&
\dot{\delta}_n^i(t) &= \Lambda\bigl(u_n(t,x_-),u_n(t,x_+)\bigr)<0,
\\
v_n(t,x_-) &> v_n(t,x_+),&&&
\mathtt{w}_\pm &\doteq \mathtt{w}\bigl(u_n(t,x_-)\bigr) = \mathtt{w}\bigl(u_n(t,x_+)\bigr) \geq w^-,
\end{align*}
hence
\begin{align*}
\Delta\mathtt{E}_n^{k,i}(t) &=
\begin{cases}
\dfrac{\rho_n(t,x_+)-\rho_n(t,x_-)}{p^{-1}(\mathtt{w}_\pm-k)}
&\text{if }v_n(t,x_+) < v_n(t,x_-) < k,
\\[10pt]
\dfrac{\rho_n(t,x_+)}{p^{-1}(\mathtt{w}_\pm-k)} - 1
&\text{if }v_n(t,x_+) < k \leq v_n(t,x_-),
\\[10pt]
0
&\text{if }k \leq v_n(t,x_+) < v_n(t,x_-),
\end{cases}
\\
-\Delta\mathtt{Q}_n^{k,i}(t) &=
\begin{cases}
\dfrac{f\bigl(u_n(t,x_-)\bigr)-f\bigl(u_n(t,x_+)\bigr)}{p^{-1}(\mathtt{w}_\pm-k)}
&\text{if }v_n(t,x_+) < v_n(t,x_-) < k,
\\[10pt]
k - \dfrac{f\bigl(u_n(t,x_+)\bigr)}{p^{-1}(\mathtt{w}_\pm-k)}
&\text{if }v_n(t,x_+) < k \leq v_n(t,x_-),
\\[10pt]
0
&\text{if }k \leq v_n(t,x_+) < v_n(t,x_-).
\end{cases}
\end{align*}
If $k > v_n(t,x_-)$ or $k \leq v_n(t,x_+)$, then it is immediate to see that $\dot{\delta}_n^i(t) \, \Delta\mathtt{E}_n^{k,i}(t) - \Delta\mathtt{Q}_n^{k,i}(t) = 0$.
Furthermore, if $v_n(t,x_+) < k \leq v_n(t,x_-)$, then
\begin{align*}&\
\dot{\delta}_n^i(t) \, \Delta\mathtt{E}_n^{k,i}(t) - \Delta\mathtt{Q}_n^{k,i}(t)
\\=&\
\Lambda\bigl(u_n(t,x_-),u_n(t,x_+)\bigr)
\left[\dfrac{\rho_n(t,x_+)}{p^{-1}(\mathtt{w}_\pm-k)} - 1\right]
+k - \dfrac{f\bigl(u_n(t,x_+)\bigr)}{p^{-1}(\mathtt{w}_\pm-k)}
\\
=&\
\underbrace{\left[\dfrac{\rho_n(t,x_+)}{p^{-1}(\mathtt{w}_\pm-k)} - 1\right]}_{>0}
\,
\underbrace{\left[\vphantom{\dfrac{\rho_n(t,x_+)}{p^{-1}(\mathtt{w}_\pm-k)}}
\Lambda\bigl(u_n(t,x_-),u_n(t,x_+)\bigr)
-
\Lambda\Bigl(\bigl(p^{-1}(\mathtt{w}_\pm-k),k\bigr),u_n(t,x_+)\Bigr)
\right]}_{>0} > 0.
\end{align*}

\item
If the $i$th discontinuity is a RS, then we let $x \doteq \delta_n^i(t)$ and observe that 
\begin{align*}
\rho_n(t,x_-) &> \rho_n(t,x_+),&
f\bigl(u_n(t,x_-)\bigr) &< f\bigl(u_n(t,x_+)\bigr),&
\dot{\delta}_n^i(t) &= \Lambda\bigl(u_n(t,x_-),u_n(t,x_+)\bigr)<0,
\\
v_n(t,x_-) &< v_n(t,x_+),&&&
\mathtt{w}_\pm &\doteq \mathtt{w}\bigl(u_n(t,x_-)\bigr) = \mathtt{w}\bigl(u_n(t,x_+)\bigr) \geq w^-,
\end{align*}
hence
\begin{align*}
\Delta\mathtt{E}_n^{k,i}(t) &=
\begin{cases}
\dfrac{\rho_n(t,x_+)-\rho_n(t,x_-)}{p^{-1}(\mathtt{w}_\pm-k)}
&\text{if }v_n(t,x_-) < v_n(t,x_+) < k,
\\[10pt]
\dfrac{\rho_n(t,x_-)}{p^{-1}(\mathtt{w}_\pm-k)} - 1
&\text{if }v_n(t,x_-) < k \leq v_n(t,x_+),
\\[10pt]
0
&\text{if }k \leq v_n(t,x_-) < v_n(t,x_+),
\end{cases}
\\
-\Delta\mathtt{Q}_n^{k,i}(t) &=
\begin{cases}
\dfrac{f\bigl(u_n(t,x_-)\bigr)-f\bigl(u_n(t,x_+)\bigr)}{p^{-1}(\mathtt{w}_\pm-k)}
&\text{if }v_n(t,x_-) < v_n(t,x_+) < k,
\\[10pt]
\dfrac{f\bigl(u_n(t,x_-)\bigr)}{p^{-1}(\mathtt{w}_\pm-k)} - k
&\text{if }v_n(t,x_-) < k \leq v_n(t,x_+),
\\[10pt]
0
&\text{if }k \leq v_n(t,x_-) < v_n(t,x_+).
\end{cases}
\end{align*}
If $k > v_n(t,x_+)$ or $k \leq v_n(t,x_-)$, then it is immediate to see that $\dot{\delta}_n^i(t) \, \Delta\mathtt{E}_n^{k,i}(t) - \Delta\mathtt{Q}_n^{k,i}(t) = 0$.
Furthermore, if $v_n(t,x_-) < k \leq v_n(t,x_+)$, then
\begin{align*}&\
\dot{\delta}_n^i(t) \, \Delta\mathtt{E}_n^{k,i}(t) - \Delta\mathtt{Q}_n^{k,i}(t)
\\=&\
\Lambda\bigl(u_n(t,x_-),u_n(t,x_+)\bigr)
\left[ \dfrac{\rho_n(t,x_-)}{p^{-1}(\mathtt{w}_\pm-k)} - 1 \right]
+ \dfrac{f\bigl(u_n(t,x_-)\bigr)}{p^{-1}(\mathtt{w}_\pm-k)} - k
\\
=&\
\underbrace{\left[ \dfrac{\rho_n(t,x_-)}{p^{-1}(\mathtt{w}_\pm-k)} - 1 \right]}_{> 0}\
\underbrace{\left[ \vphantom{\dfrac{\rho_n(t,x_-)}{p^{-1}(\mathtt{w}_\pm-k)}}
\Lambda\bigl(u_n(t,x_-),u_n(t,x_+)\bigr)
+ \Lambda\Bigl(u_n(t,x_-),\bigl(p^{-1}(\mathtt{w}_\pm-k),k\bigr)\Bigr)
\right]}_{< 0}
\\
\geq&\
-\dfrac{2}{\rho^-} \, p^{-1}(\mathtt{w}_\pm) \, p'\bigl(p^{-1}(\mathtt{w}_\pm)\bigr) \,
\bigl[\rho_n(t,x_-)-\rho_n(t,x_+)\bigr]
\end{align*}
because $\rho_n(t,x_-) > p^{-1}(\mathtt{w}_\pm-k) \geq \rho_n(t,x_+) \geq \rho^-$ and because by the concavity of $\mathfrak{L}_{\mathtt{w}_\pm}(\rho) = (\mathtt{w}_\pm-p(\rho))\,\rho$ we have
\begin{align*}
0 >&\
\Lambda\bigl(u_n(t,x_-),u_n(t,x_+)\bigr) >
\Lambda\Bigl(u_n(t,x_-),\bigl(p^{-1}(\mathtt{w}_\pm-k),k\bigr)\Bigr) 
\\>&\
\mathfrak{L}_{\mathtt{w}_\pm}'\bigl(\rho_n(t,x_-)\bigr) = 
\mathtt{w}_\pm - p\bigl(\rho_n(t,x_-)\bigr) - \rho_n(t,x_-) \, p'\bigl(\rho_n(t,x_-)\bigr)
\\\geq&\
\mathfrak{L}_{\mathtt{w}_\pm}'\bigl(p^{-1}(\mathtt{w}_\pm)\bigr) =
- p^{-1}(\mathtt{w}_\pm) \, p'\bigl(p^{-1}(\mathtt{w}_\pm)\bigr).
\end{align*}
\end{itemize}
\medskip

The above case by case study shows that
\begin{align*}
&\ 
\liminf_{n\to\infty} \int_0^T
\int_{\R} \Bigl[\mathtt{E}^k(u_n) \, \phi_t + \mathtt{Q}^k(u_n) \, \phi_x\Bigr] \, \d x \, \d t
\\=&\
\liminf_{n\to\infty} 
\int_0^T \sum_{i\in\mathsf{RS}_n(t)}
\Bigl[ \dot{\delta}_n^i(t) \, \Delta\mathtt{E}_n^{k,i}(t) - \Delta\mathtt{Q}_n^{k,i}(t) \Bigr] \, \phi\bigl(t,\delta_n^i(t)\bigr) \, \d t
\\\geq&\
-\dfrac{2}{\rho^-} \, \max\limits_{\rho\in[p^{-1}(w^-),R]}\bigl|\rho \, p'(\rho)\bigr| \,
\liminf_{n\to\infty} 
\int_0^T \sum_{i\in\mathsf{RS}_n(t)}
\Bigl[\rho_n\bigl(t,\delta_n^i(t)_-\bigr)-\rho_n\bigl(t,\delta_n^i(t)_+\bigr)\Bigr] \, \phi\bigl(t,\delta_n^i(t)\bigr) \, \d t
\\\geq&\
-\dfrac{2 \, T }{\rho^-} \, \|\phi\|_{\L\infty} \, C_F^o \max\limits_{\rho\in[\rho^-,R]}\bigl|\rho\,p'(\rho)\bigr|
\doteq -M,
\end{align*}
where $\delta_n^i(t) \in \R$, $i\in\mathsf{RS}_n(t)\subset\N$, are the positions of the RSs of $u_n(t,\cdot)$ and $C_F^o$ is defined in \eqref{e:KingCrimson}.

We claim that for any fixed $h>0$, there exists a dense set $\mathcal{K}_h$ of values of $k$ in $[0,V]$ such that
\[
\liminf_{n\to\infty} 
\int_0^T \sum_{i\in\mathsf{RS}_n(t)}
\Bigl[ \dot{\delta}_n^i(t) \, \Delta\mathtt{E}_n^{k,i}(t) - \Delta\mathtt{Q}_n^{k,i}(t) \Bigr] \, \phi\bigl(t,\delta_n^i(t)\bigr) \, \d t
\geq - \dfrac{1}{h}.
\]
To prove it we fix $a$, $b\in[0,V]$ with $a<b$ and show that there exists $k \in (a,b)$ such that the above estimate is satisfied.
Let $l \doteq \left\lceil2(M\,h+1)/(b-a)\right\rceil$ and introduce the set
\[
\mathcal{K}_h \doteq \dfrac{2\,\N+1}{l} \cap (a,b).
\]
Let $\mathcal{E}_n>0$ be the maximal $(v,w)$-distance between two ``consecutive'' points in the grid $\mathcal{G}_n$ having the same $w$-coordinate, namely, with a slight abuse of notations, we let
\[
\mathcal{E}_n \doteq \max_{\substack{(v^i,w),\, (v^{i+1},w) \in \mathcal{G}_n\\ v^i \neq v^{i+1}}} (v^{i+1}-v^i).
\]
Let $\mathfrak{n}_h \in \N$ be sufficiently large so that $\mathcal{E}_{\mathfrak{n}_h} < 2/l$.
Take $n \geq \mathfrak{n}_h$.
We claim that for any $i \in \mathsf{RS}_n(t)$ we have
\[
\mathcal{K}_h \cap \Bigl(v_n\bigl(t,\delta_n^i(t)_-\bigr),v_n\bigl(t,\delta_n^i(t)_+\bigr)\Bigr)
\]
has at most one element.
Indeed, if $\mathcal{K}_h$ has more than one element then for any $i \in \mathsf{RS}_n(t)$ we have
\[
v_n\bigl(t,\delta_n^i(t)_+\bigr)-v_n\bigl(t,\delta_n^i(t)_-\bigr) \leq \mathcal{E}_n < \dfrac{2}{l} = \min_{\substack{k^1,\, k^2 \in \mathcal{K}_h\\ k^1 \neq k^2}} |k^1-k^2|.
\]
As a consequence the sum
\[
\sum_{k\in\mathcal{K}_h}
\Bigl[ \dot{\delta}_n^i(t) \, \Delta\mathtt{E}_n^{k,i}(t) - \Delta\mathtt{Q}_n^{k,i}(t) \Bigr]
\]
has at most one nonzero element; moreover
\[
-m \, \Bigl(\rho_n\bigl(t,\delta_n^i(t)_-\bigr)-\rho_n\bigl(t,\delta_n^i(t)_+\bigr)\Bigr)
\leq 
\sum_{k\in\mathcal{K}_h}
\Bigl[ \dot{\delta}_n^i(t) \, \Delta\mathtt{E}_n^{k,i}(t) - \Delta\mathtt{Q}_n^{k,i}(t) \Bigr],
\]
where
\[
m \doteq
\dfrac{2}{\rho^-} \max\limits_{\rho\in[\rho^-,R]}\bigl|\rho\,p'(\rho)\bigr|
= \dfrac{M}{T\,C^o_F\,\|\phi\|_{\L\infty}}.
\]
Therefore we find
\[
\sum_{i\in\mathsf{RS}_n(t)}\sum_{k\in\mathcal{K}_h}
\Bigl[ \dot{\delta}_n^i(t) \, \Delta\mathtt{E}_n^{k,i}(t) - \Delta\mathtt{Q}_n^{k,i}(t) \Bigr] \geq -m \, C_F^o.
\]
By exchanging the sums, multiplying by the test function and integrating in time we get
\[
\sum_{k\in\mathcal{K}_h}\int_0^T\sum_{i\in\mathsf{RS}_n(t)}
\Bigl[ \dot{\delta}_n^i(t) \, \Delta\mathtt{E}_n^{k,i}(t) - \Delta\mathtt{Q}_n^{k,i}(t) \Bigr] \, \phi\bigl(t,\delta_n^i(t)\bigr) \, \d t \geq -M.
\]
Moreover, by construction we have that $\mathcal{K}_h$ is a non-empty set with a finite number of elements (it has at most $h\,M$ elements), hence
\[
h\,M \max_{k\in\mathcal{K}_h} \left[
\int_0^T\sum_{i\in\mathsf{RS}_n(t)}
\Bigl[ \dot{\delta}_n^i(t) \, \Delta\mathtt{E}_n^{k,i}(t) - \Delta\mathtt{Q}_n^{k,i}(t) \Bigr] \, \phi\bigl(t,\delta_n^i(t)\bigr) \, \d t
\right]
\geq -M.
\]

In conclusion we proved that there exists $k \in \mathcal{K}_h \subseteq (a,b)$ such that the above estimate is satisfied for any $n \geq \mathfrak{n}_h$; therefore, since $\mathcal{K}_h$ has a finite number of elements, we have
\[
\liminf_{n\to\infty} 
\int_0^T \sum_{i\in\mathsf{RS}_n(t)}
\Bigl[ \dot{\delta}_n^i(t) \, \Delta\mathtt{E}_n^{k,i}(t) - \Delta\mathtt{Q}_n^{k,i}(t) \Bigr] \, \phi\bigl(t,\delta_n^i(t)\bigr) \, \d t
\geq - \dfrac{1}{h}.
\]
Since $a$ and $b$ are arbitrary, the above estimate holds true for a dense set of values of $k$ in $[0,V]$.

Actually, the above estimate holds for any $k$ in $[0,V]$ because the term in brackets in the above formula is continuous with respect to $k$. Finally, for the arbitrariness of $h$, we have that
\[
\liminf_{n\to\infty} 
\int_0^T \sum_{i\in\mathsf{RS}_n(t)}
\Bigl[ \dot{\delta}_n^i(t) \, \Delta\mathtt{E}_n^{k,i}(t) - \Delta\mathtt{Q}_n^{k,i}(t) \Bigr] \, \phi\bigl(t,\delta_n^i(t)\bigr) \, \d t
\geq 0
\]
and this concludes the proof of \eqref{e:PorcupineTree}.

\item[\ref{CS4}] 
We prove now that \eqref{eq:const} holds for a.e.~$t >0$, namely
\[f\bigl(u(t,0_\pm)\bigr) \le F \qquad \text{for a.e.~}t >0.\]
By construction $f(u_n(t,0_\pm)) \le F$ for any $t>0$, namely the approximate solutions satisfy \eqref{eq:const}.
Since weak convergence preserves pointwise inequalities, it is sufficient to prove that $f(u_n(t,0_\pm))$ weakly converges to $f(u(t,0_\pm))$.
If $\phi$ is a smooth test function of time with compact support in $(0,\infty)$ and $\varphi$ is a smooth test function of space with compact support and such that $\varphi(0)=1$, then
\[
\int_0^\infty f\bigl(u_n(t,0_-)\bigr) \, \phi(t) \, \d t =
\int_0^\infty \int_{-\infty}^0 \Bigl[ \rho_n(t,x) \, \dot{\phi}(t) \, \varphi(x) + f\bigl(u_n(t,x)\bigr) \, \phi(t) \,\dot{\varphi}(x) \Bigr] \d x \, \d t.
\]
The right-hand side passes to the limit, yielding the analogous expression with $u_n$ replaced by $u$.
By using again the Green-Gauss formula, one finally finds that
\[
\lim_{n\to\infty}\int_0^\infty f\bigl(u_n(t,0_-)\bigr) \, \phi(t) \, \d t =
\int_0^\infty f\bigl(u(t,0_-)\bigr) \, \phi(t) \, \d t.
\]
As a consequence $f(u_n(t,0_-))$ weakly converges to $f(u(t,0_-))$, hence $f(u(t,0_-)) \le F$ for a.e.~$t>0$.
At last, since we already proved that $u$ satisfies the first Rankine-Hugoniot condition, we have $f(u(t,0_-)) = f(u(t,0_+))$, hence $f(u(t,0_\pm)) \le F$ for a.e.~$t>0$.\qedhere

\end{enumerate}

\end{proof}

\subsection{The density flow through \texorpdfstring{$\pmb{x=0}$}{}}
\label{s:opt}

Let $u$ be the solution of constrained Cauchy problem \eqref{eq:system}, \eqref{eq:initdat}, \eqref{eq:const} constructed in the previous section.
By Propositions~\ref{p:obvious} and~\ref{p:JacobCollier} we have that non-classical shocks of $u$ can occur only at the constraint location $x=0$, and in this case the (density) flow at $x=0$ does not exceed the maximal flow $F$ allowed by the constraint.

In the case of a constrained Riemann problem \eqref{eq:system}, \eqref{eq:const}, \eqref{eq:Rdata}, we know that $u$ coincides with $(t,x)\mapsto\mathcal{R}_F[u_\ell,u_r](x/t)$, moreover if $(u_\ell,u_r) \in \mathcal{D}_2$ then the flow of the non-classical shock of $u$ coincides with $F$.
In the next proposition we show that also for a general constrained Cauchy problem the flow of the non-classical shocks of $u$ coincides with $F$ if the traces at $x=0$ of the approximate solutions $(u_n)_n$ satisfy a technical condition.

\begin{proposition}\label{p:TheWineryDogs}
Let $u^o \in \L1\cap\BV(\R;\Omega)$, $F\in [0,f_{\rm c}^+]$ satisfy \ref{H.1} or \ref{H.2} and $u$ be a limit of the approximate solutions $(u_n)_n$ constructed in Section~\ref{s:approxsolR}.
Assume that the traces at $x=0$ of $(u_n)_n$ and $u$ satisfy \eqref{e:Opeth}, that is  for any $k \in [0,V]$ and $\phi \in \Cc\infty((0,\infty)\times \R;\R)$ such that $\phi \geq0$
\[
\lim_{n\to\infty} \int_0^T \mathtt{N}^k_F\bigl(u_n(t,0_-)\bigr) \, \phi(t,0) \, \d t = 
\int_0^T \mathtt{N}^k_F\bigl(u(t,0_-)\bigr) \, \phi(t,0) \, \d t,
\]
with
\[
\mathtt{N}^k_F(u) \doteq \begin{cases}
f(u) \left[\dfrac{k}{F}-\dfrac{1}{p^{-1}\bigl(\mathtt{W}(u)-k\bigr)}\right]_+ & \hbox{if } F \neq 0,\\
k & \hbox{if } F = 0.
\end{cases}
\]
If at time $t_0>0$ the limit $u$ has a non-classical discontinuity, then $f(u(t_0,0_\pm))=F$.
\end{proposition}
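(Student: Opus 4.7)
The plan is to pass to the limit in an auxiliary boundary-entropy inequality that encodes the constraint and then to specialize it at a time when $u$ develops a non-classical shock.

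First, I would establish, at the approximate level, that for every $k\in[0,V]$ and every $\phi\in\Cc\infty((0,\infty)\times\R;\R)$ with $\phi\ge 0$,
\[
\int_0^\infty\!\!\int_\R \bigl(\mathtt{E}^k(u_n)\,\phi_t+\mathtt{Q}^k(u_n)\,\phi_x\bigr)\,\d x\,\d t \ +\ \int_0^\infty \mathtt{N}^k_F\bigl(u_n(t,0_-)\bigr)\,\phi(t,0)\,\d t\ \ge\ 0.
\]
By the Green--Gauss formula, and thanks to the Kruzkov property of $u_n$ across each wave-front not lying on $x=0$ (already verified in the proof of Proposition~\ref{p:JacobCollier}), this reduces to the pointwise comparison
\[
\mathtt{Q}^k\bigl(u_n(t,0_-)\bigr)-\mathtt{Q}^k\bigl(u_n(t,0_+)\bigr)\ \le\ \mathtt{N}^k_F\bigl(u_n(t,0_-)\bigr),
\]
to be checked at each time the trace jumps at $x=0$. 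When the jump belongs to $\mathcal{D}_1$ it is classical and the Lax entropy inequality renders the left-hand side non-positive, while $\mathtt{N}^k_F\ge 0$. When the jump belongs to $\mathcal{D}_2$, the construction of $\mathcal{R}_{F,n}$ forces $f(u_n(t,0_\pm))=F$, and the explicit formulas \eqref{e:hat-check}, together with $\mathtt{W}(u_n(t,0_-))\ge \mathtt{W}(u_n(t,0_+))$, allow a case analysis on the position of $k$ relative to $v_n(t,0_\pm)$ that shows the inequality is in fact saturated.

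Next, I would pass to the limit $n\to\infty$: the volume integral converges by the a.e.~convergence of $u_n$ to $u$, the uniform bound $\|u_n\|_{\L\infty}\le R+V$ and continuity of $\mathtt{E}^k$, $\mathtt{Q}^k$; the boundary integral converges precisely by the standing hypothesis \eqref{e:Opeth}. Re-applying Green--Gauss to the resulting inequality for $u$, and using \ref{CS3} to cancel all interior contributions away from $x=0$, a standard localization in time of the test function yields
\[
\mathtt{Q}^k\bigl(u(t,0_-)\bigr)-\mathtt{Q}^k\bigl(u(t,0_+)\bigr)\ \le\ \mathtt{N}^k_F\bigl(u(t,0_-)\bigr)\qquad \text{for a.e.~}t>0.
\]

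Finally, at the time $t_0$ where $u$ has a non-classical shock I set $u_\pm\doteq u(t_0,0_\pm)$ and observe $\bar F\doteq f(u_-)=f(u_+)\le F$ by the first Rankine--Hugoniot condition of Proposition~\ref{p:obvious}. Assuming for contradiction that $\bar F<F$, non-classicality of the jump forces the existence of some $k\in[0,V]$ for which $\mathtt{Q}^k(u_-)-\mathtt{Q}^k(u_+)>0$, attaining the same saturated value as in the approximate setting, whereas the prefactor $f(u_-)/F=\bar F/F<1$ appearing in the definition of $\mathtt{N}^k_F(u_-)$ strictly shrinks the right-hand side below that saturated value, contradicting the displayed inequality and forcing $\bar F=F$. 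The hard part will be the pointwise verification of the boundary inequality for $u_n$ in the regime $(u_n(t,0_-),u_n(t,0_+))\in\mathcal{D}_2$, which requires a careful case analysis in $k$ and in the signs of $v_n(t,0_\pm)-k$; a secondary technical point is, in the closing step, selecting a single $k$ that simultaneously detects the non-classicality of $(u_-,u_+)$ and exhibits the strict gap coming from $\bar F<F$.
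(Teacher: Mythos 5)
Your strategy coincides with the paper's: establish the augmented entropy inequality
\[
\int_0^\infty\Bigl[\int_\R\bigl(\mathtt{E}^k(u_n)\,\phi_t+\mathtt{Q}^k(u_n)\,\phi_x\bigr)\,\d x+\mathtt{N}^k_F\bigl(u_n(t,0_-)\bigr)\,\phi(t,0)\Bigr]\d t\ \ge\ 0
\]
at the approximate level (the only case not already covered by Proposition~\ref{p:JacobCollier} being the non-classical shock at $x=0$), pass to the limit using \eqref{e:Opeth}, localize at the non-classical discontinuity, and extract $f=F$ from the resulting algebraic identity.

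There is, however, a sign error in your boundary inequality, and it breaks the concluding step as written. In the Green--Gauss identity the contribution of a stationary discontinuity at $x=0$ is $\mathtt{Q}^k(u(t,0_-))-\mathtt{Q}^k(u(t,0_+))$, so the constraint that the augmented inequality imposes on the traces is
\[
\mathtt{Q}^k\bigl(u(t,0_+)\bigr)-\mathtt{Q}^k\bigl(u(t,0_-)\bigr)\ \le\ \mathtt{N}^k_F\bigl(u(t,0_-)\bigr),
\]
the reverse of what you display. At a non-classical shock $(u_-,u_+)$ with $v_-<v_+$ one has $\mathtt{Q}^k(u_-)-\mathtt{Q}^k(u_+)<0$ (this is exactly the computation $-\Delta\mathtt{Q}^{k,i}_n<0$ in the paper's proof), so your displayed inequality is trivially satisfied and carries no information; correspondingly, your claim that non-classicality produces some $k$ with $\mathtt{Q}^k(u_-)-\mathtt{Q}^k(u_+)>0$ is false — it is $\mathtt{Q}^k(u_+)-\mathtt{Q}^k(u_-)$ that is positive there. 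Once the orientation is corrected, no contradiction argument is needed: for $k$ in the window $\bigl(\hat{\mathtt v}(\mathtt{w}(u_-),F),\check{\mathtt v}(v_+,F)\bigr)$ the bracket in $\mathtt{N}^k_F(u_-)$ is active and one computes directly $\mathtt{Q}^k(u_-)-\mathtt{Q}^k(u_+)+\mathtt{N}^k_F(u_-)=k\,(f/F-1)\ge0$, whence $f\ge F$ and, with $f\le F$ from \ref{CS4}, $f=F$. A secondary point: the paper does not derive an ``a.e.~$t$'' trace inequality and then invoke it at $t_0$ — since $\{t_0\}$ is a null set that would not suffice — but instead uses an explicit test function concentrating at $(t_0,0)$ from the left in time, so that $\phi(t,0)\to\delta^D_{t_0-}$ and $\phi_x$ concentrates on $x=0_\pm$ with opposite signs, yielding the inequality at the precise time where the non-classical shock sits; your localization step should be made equally explicit.
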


\begin{proof}
We first prove that for any $k \in [0,V]$ and $\phi \in \Cc\infty((0,\infty)\times \R;\R)$ such that $\phi \geq0$ we have
\begin{equation}\label{e:Deftones2}
\int_0^\infty \left[
\int_{\R} \Bigl[\mathtt{E}^k(u) \, \phi_t + \mathtt{Q}^k(u) \, \phi_x\Bigr] \, \d x
+ \mathtt{N}^k_F\bigl(u(t,0_-)\bigr) \, \phi(t,0) \right] \d t \geq 0.
\end{equation}
Notice that \eqref{e:Deftones2} differs from \eqref{e:Deftones} not only for an extra term involving $\mathtt{N}^k_F(u(t,0_+))$, but also because here we do not require that $\phi(\cdot,0)\equiv0$.

Choose $T>0$ such that $\phi(t,x) = 0$ whenever $t\geq T$.
By \eqref{e:Opeth}, the a.e.~convergence of $u_n$ to $u$ and the uniform continuity of $\mathtt{E}^k$ and $\mathtt{Q}^k$, it is sufficient to prove that
\begin{equation}\label{e:PorcupineTreeb}
\liminf_{n\to\infty} \int_0^T \left[
\int_{\R} \Bigl[ \mathtt{E}^k(u_n) \, \phi_t + \mathtt{Q}^k(u_n) \, \phi_x \Bigr] \, \d x
+ \mathtt{N}^k_F\bigl(u_n(t,0_-)\bigr) \, \phi(t,0)
\right] \d t \geq 0.
\end{equation}
As already observed in the proof of Proposition~\ref{p:JacobCollier}, by the Green-Gauss formula the double integral above can be written as
\[
\int_0^T \sum_{i\in\mathsf{D}(t)}
\Bigl[ \dot{\delta}_n^i(t) \, \Delta\mathtt{E}_n^{k,i}(t) - \Delta\mathtt{Q}_n^{k,i}(t) \Bigr] \, \phi\bigl(t,\delta_n^i(t)\bigr) \, \d t ,
\]
where
\begin{align*}
\Delta\mathtt{E}_n^{k,i}(t) &\doteq 
\mathtt{E}^k\Bigl(u_n\bigl(t,\delta_n^i(t)_+\bigr)\Bigr)
-\mathtt{E}^k\Bigl(u_n\bigl(t,\delta_n^i(t)_-\bigr)\Bigr),&
\Delta\mathtt{Q}_n^{k,i}(t) &\doteq 
\mathtt{Q}^k\Bigl(u_n\bigl(t,\delta_n^i(t)_+\bigr)\Bigr)
-\mathtt{Q}^k\Bigl(u_n\bigl(t,\delta_n^i(t)_-\bigr)\Bigr).
\end{align*}
To estimate the above integral we can proceed as in the proof of Proposition~\ref{p:JacobCollier}, with the exception that here the $i$th discontinuity could also be a NS.
\begin{figure}[!ht]
\begin{center}
\resizebox{\textwidth}{!}{
\def\ratio{1}
\def\pic{50mm}
\begin{tikzpicture}[every node/.style={anchor=south west,inner sep=0pt},x=1mm/\ratio, y=1mm/\ratio]
\node at (4,4) {\includegraphics[height=\pic]{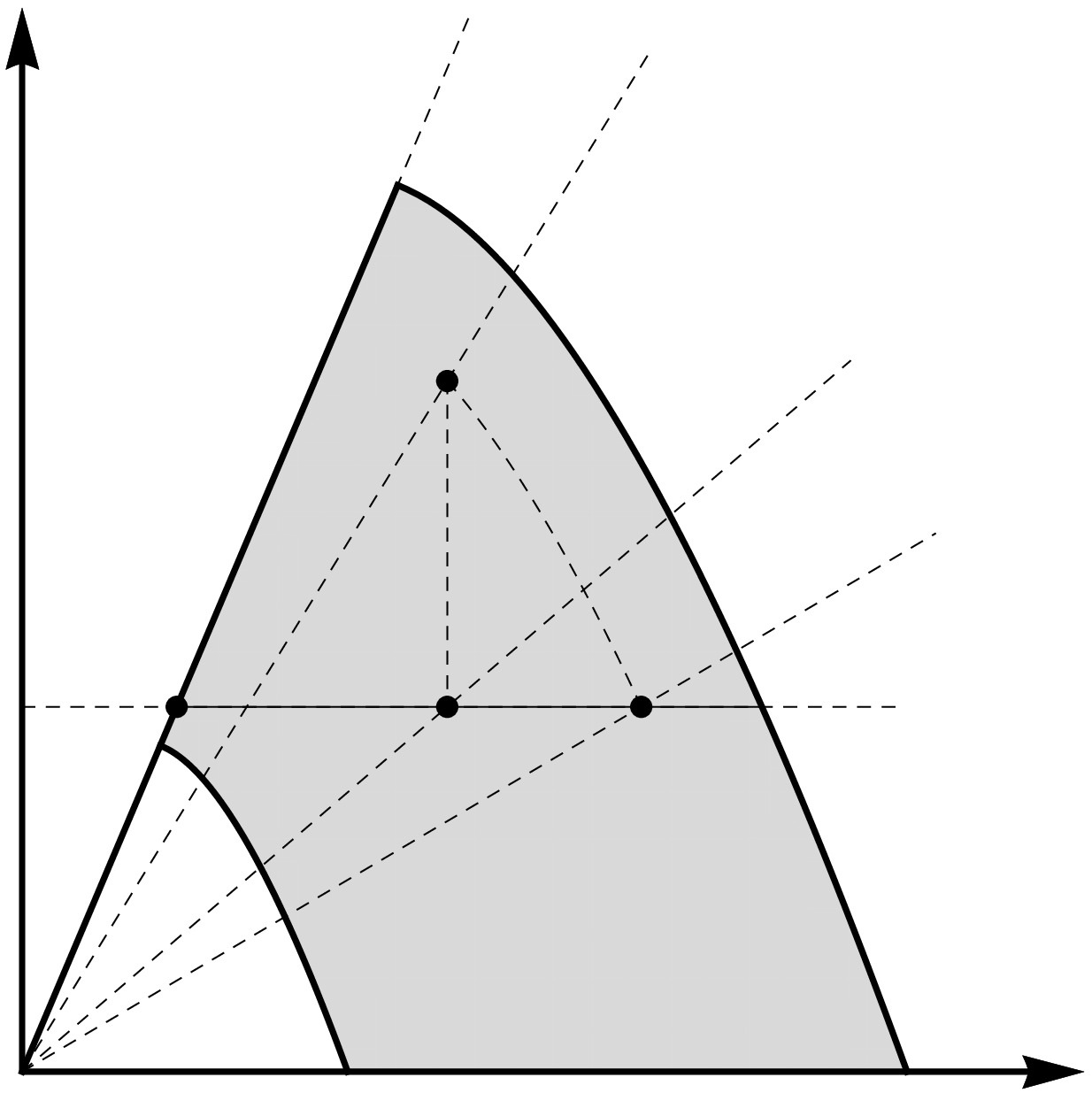}};
\node at (52,0) {$\rho$};
\node at (0,50) {$f$};
\node at (0,20.5) {$F$};
\node at (13,51) {$v_0^+=V$};
\node at (44,37) {$v_{0,F}^-$};
\node at (48,29) {$v_0^-$};
\node at (35,51) {$k$};
\end{tikzpicture}
\quad
\begin{tikzpicture}[every node/.style={anchor=south west,inner sep=0pt},x=1mm/\ratio, y=1mm/\ratio]
\node at (4,4) {\includegraphics[height=\pic]{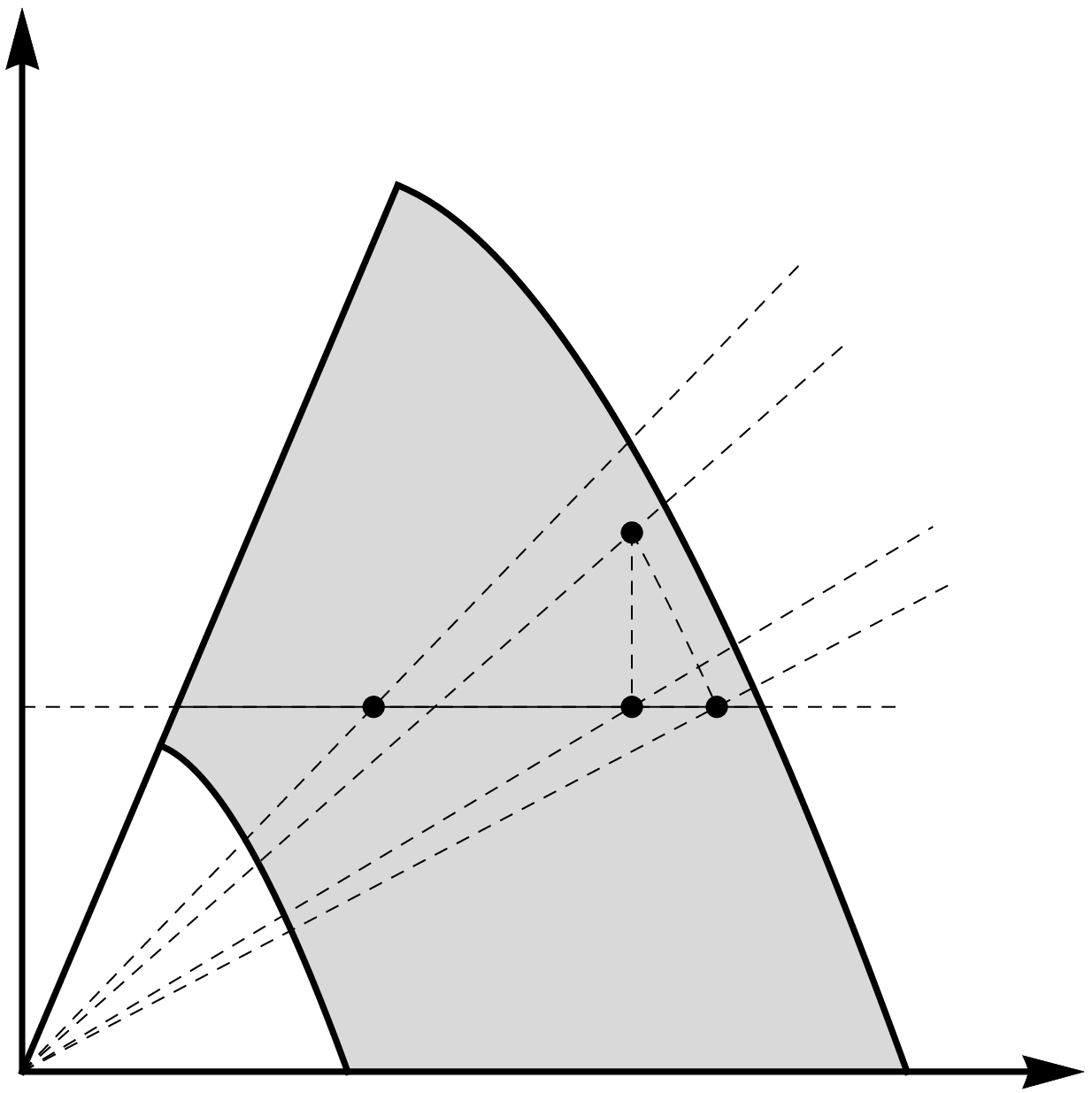}};
\node at (52,0) {$\rho$};
\node at (0,50) {$f$};
\node at (0,20.5) {$F$};
\node at (41,42) {$v_0^+$};
\node at (44,37) {$k$};
\node at (48,25) {$v_0^-$};
\node at (47,29) {$v_{0,F}^-$};
\end{tikzpicture}
\quad
\begin{tikzpicture}[every node/.style={anchor=south west,inner sep=0pt},x=1mm/\ratio, y=1mm/\ratio]
\node at (4,4) {\includegraphics[height=\pic]{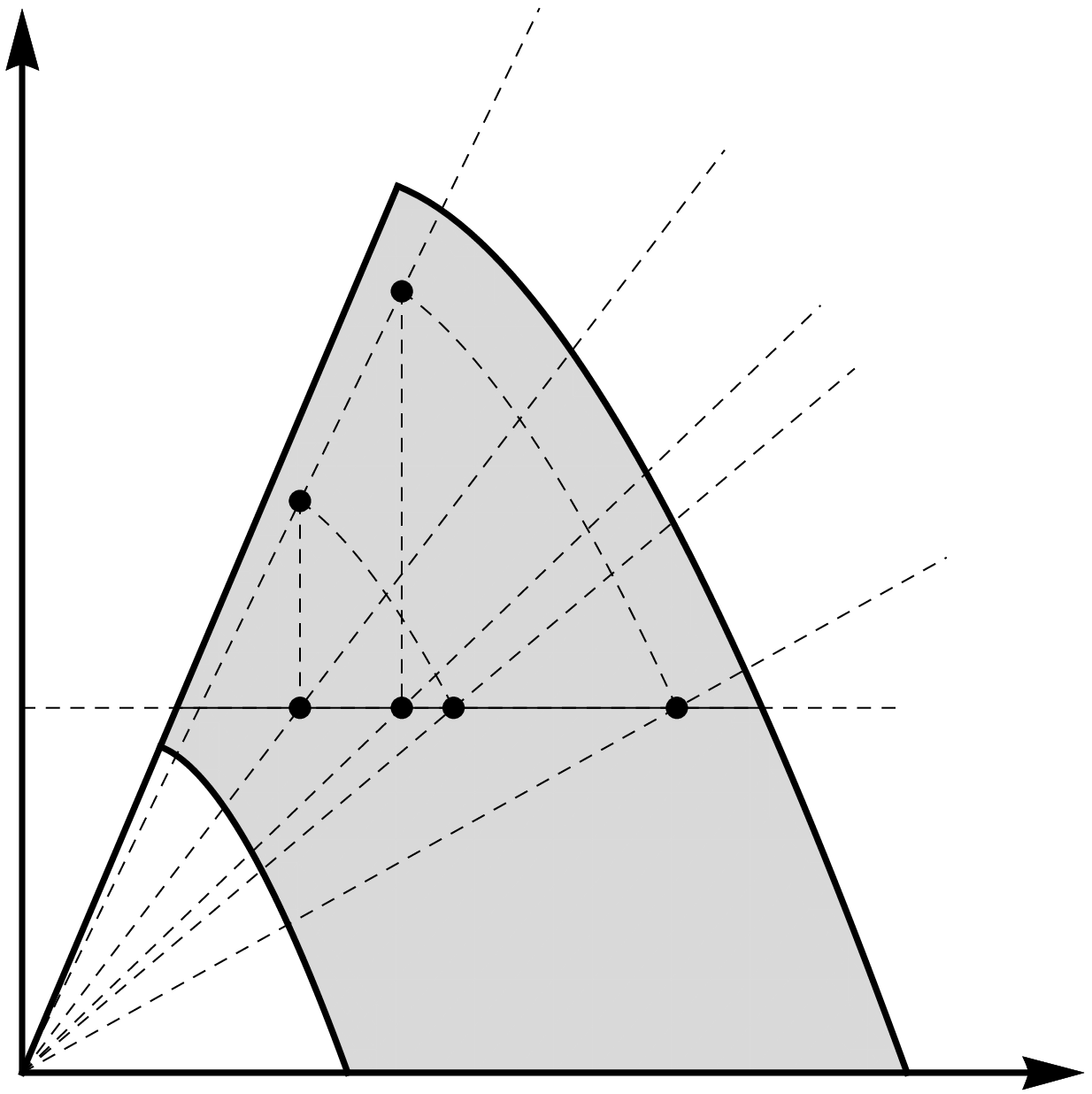}};
\node at (52,0) {$\rho$};
\node at (0,50) {$f$};
\node at (0,20.5) {$F$};
\node at (41,40) {$v_{0,F}^-$};
\node at (44,35) {$v_0^+$};
\node at (48,27) {$v_0^-$};
\node at (37,47) {$v_{0,F}^+$};
\node at (30,51) {$k$};
\end{tikzpicture}
}
\end{center}
\caption{Above $F \in(f_{\rm c}^-,f_{\rm c}^+)$, $v_0^\pm \doteq v_n(t,0_\pm)$ and $v_{0,F}^\pm \doteq F/p^{-1}(\mathtt{W}(u_n(t,0_\pm))-k)$.
With the first two pictures we show that if $v_0^- < k < v_0^+$, then $v_{0,F}^- < k$.
In the last picture we consider the case $v_0^- < v_0^+ < k$ and show that $v_{0,F}^- < v_{0,F}^+ < k$.}
\label{f:DevinTownsendp}
\end{figure}
\begin{figure}[!ht]
\begin{center}
\resizebox{\textwidth}{!}{
\def\ratio{1}
\def\pic{50mm}
\begin{tikzpicture}[every node/.style={anchor=south west,inner sep=0pt},x=1mm/\ratio, y=1mm/\ratio]
\node at (4,4) {\includegraphics[height=\pic]{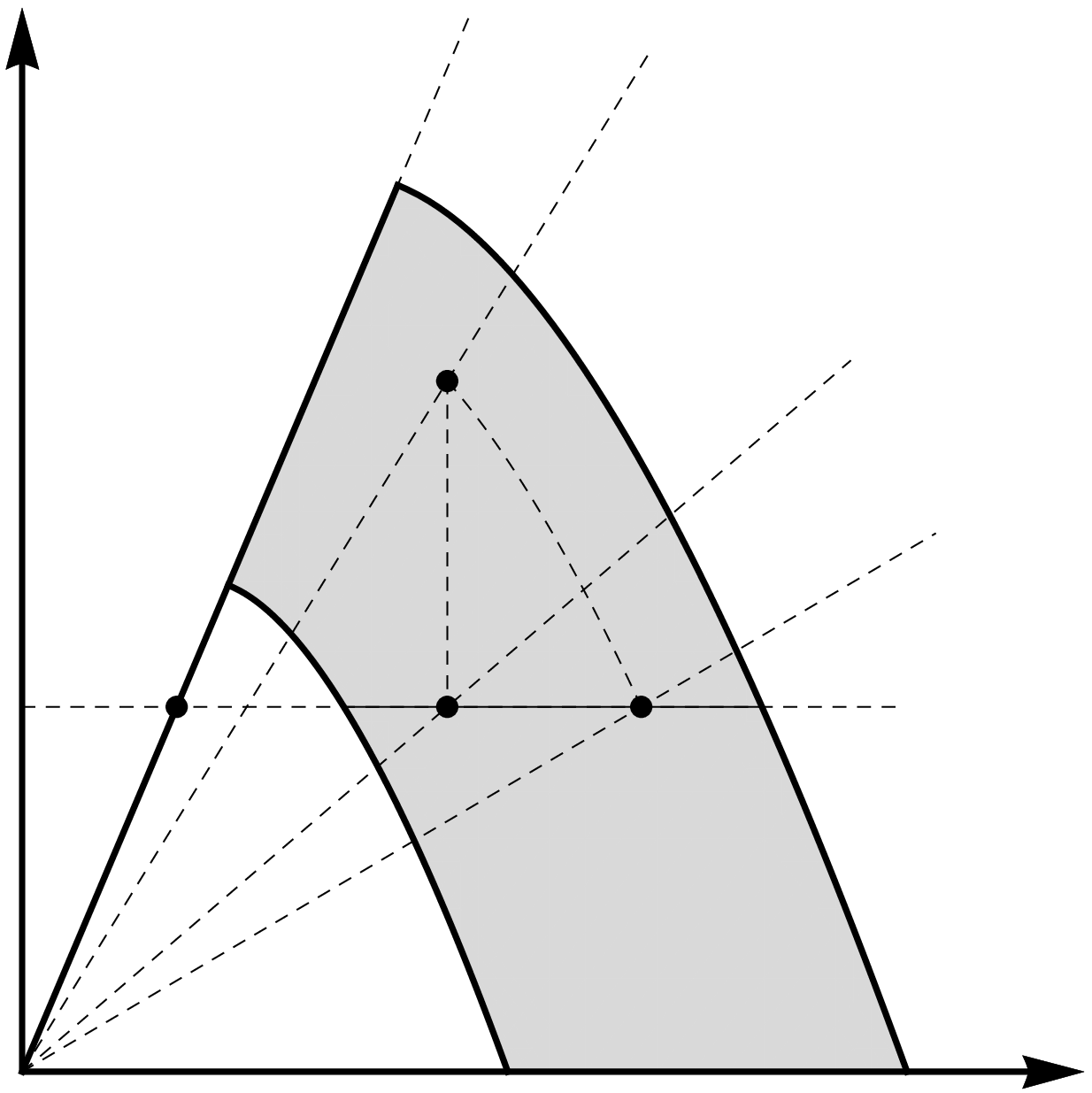}};
\node at (52,0) {$\rho$};
\node at (0,50) {$f$};
\node at (0,20.5) {$F$};
\node at (13,51) {$v_0^+=V$};
\node at (44,37) {$v_{0,F}^-$};
\node at (48,29) {$v_0^-$};
\node at (35,51) {$k$};
\end{tikzpicture}
\quad
\begin{tikzpicture}[every node/.style={anchor=south west,inner sep=0pt},x=1mm/\ratio, y=1mm/\ratio]
\node at (4,4) {\includegraphics[height=\pic]{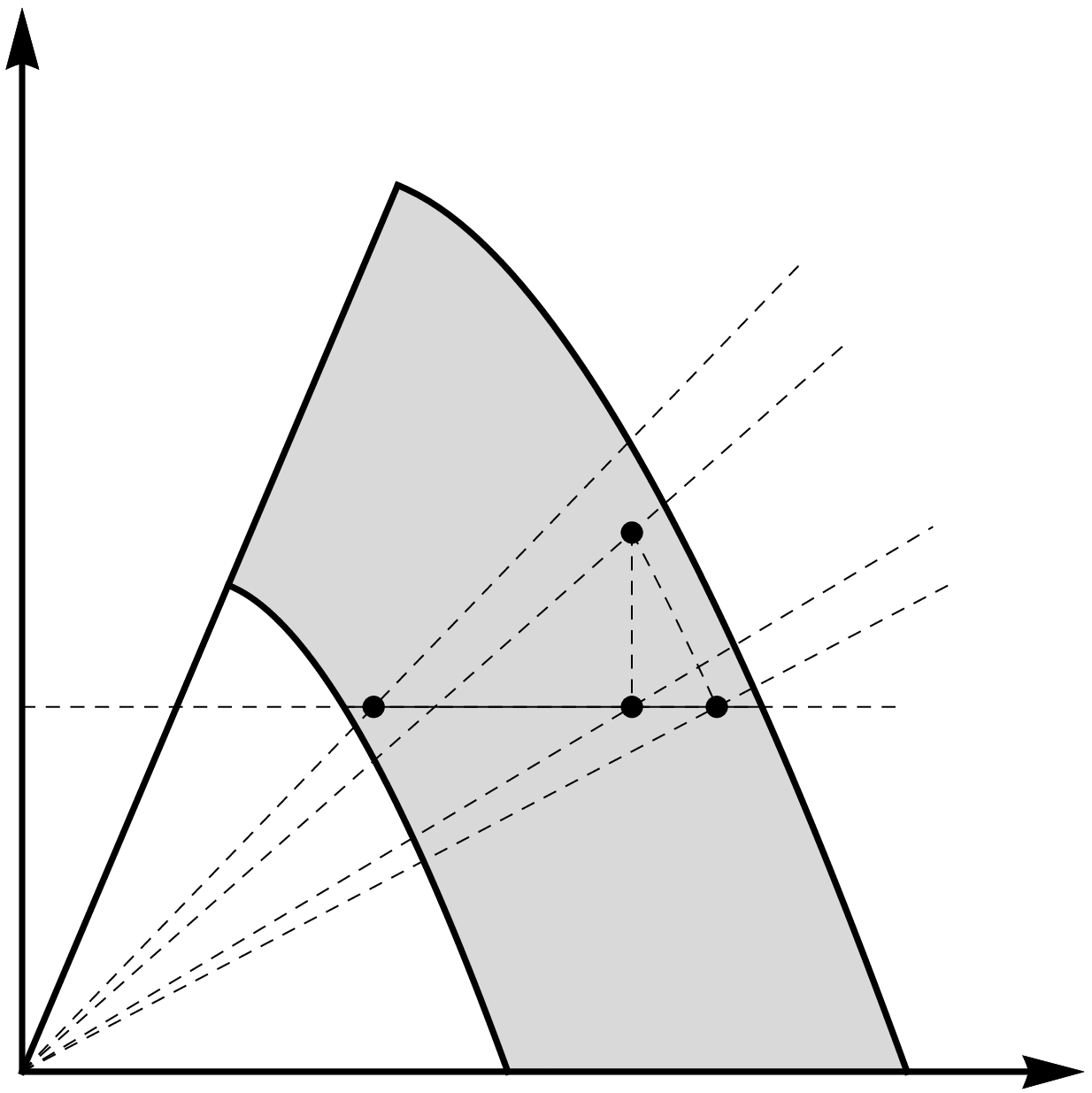}};
\node at (52,0) {$\rho$};
\node at (0,50) {$f$};
\node at (0,20.5) {$F$};
\node at (41,42) {$v_0^+$};
\node at (44,37) {$k$};
\node at (48,25) {$v_0^-$};
\node at (47,29) {$v_{0,F}^-$};
\end{tikzpicture}
\quad
\begin{tikzpicture}[every node/.style={anchor=south west,inner sep=0pt},x=1mm/\ratio, y=1mm/\ratio]
\node at (4,4) {\includegraphics[height=\pic]{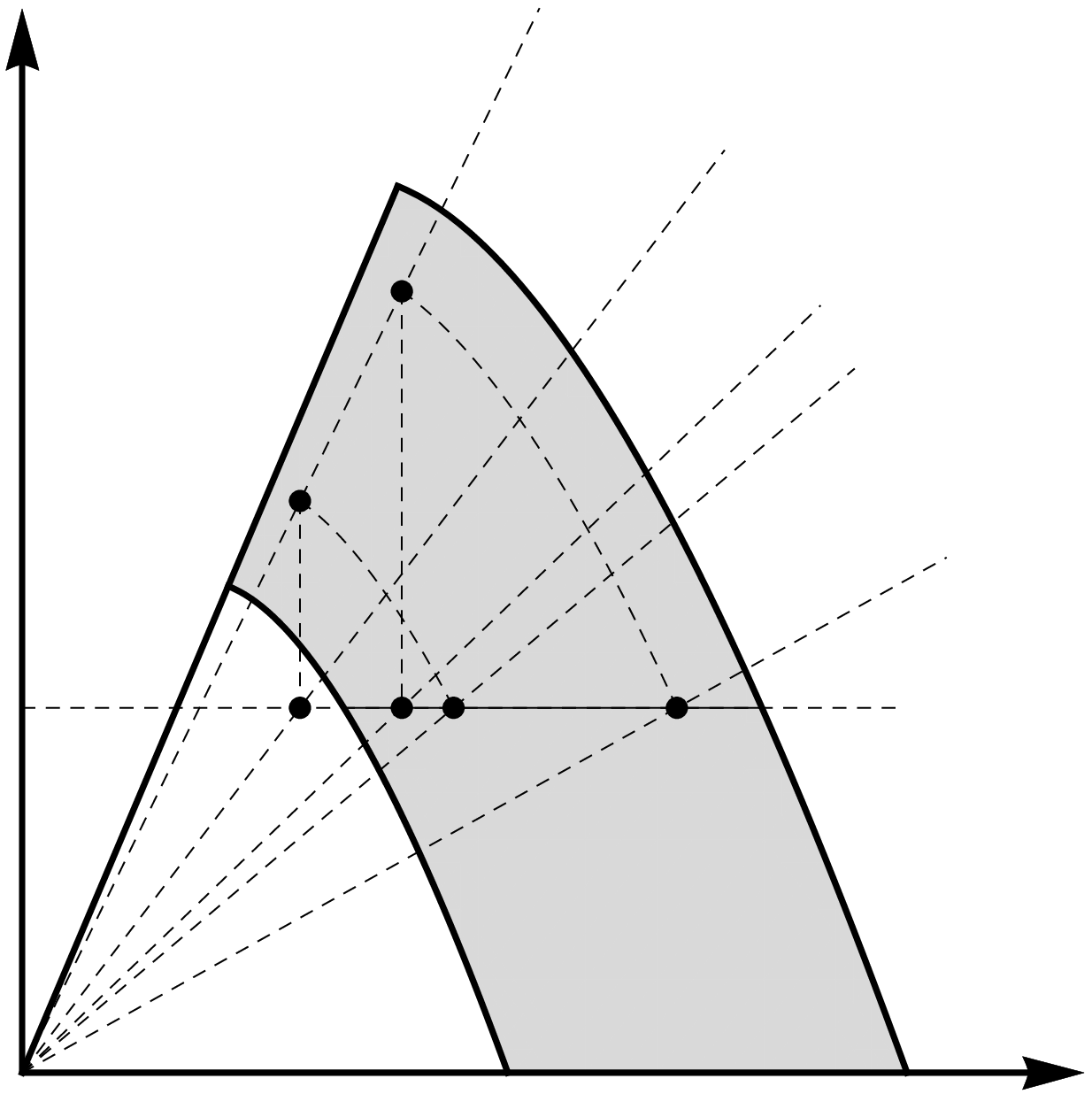}};
\node at (52,0) {$\rho$};
\node at (0,50) {$f$};
\node at (0,20.5) {$F$};
\node at (41,40) {$v_{0,F}^-$};
\node at (44,35) {$v_0^+$};
\node at (48,27) {$v_0^-$};
\node at (37,47) {$v_{0,F}^+$};
\node at (30,51) {$k$};
\end{tikzpicture}
}
\end{center}
\caption{Above $F \in(0,f_{\rm c}^-)$, $v_0^\pm \doteq v_n(t,0_\pm)$ and $v_{0,F}^\pm \doteq F/p^{-1}(\mathtt{W}(u_n(t,0_\pm))-k)$.
With the first two pictures we show that if $v_0^- < k < v_0^+$, then $v_{0,F}^- < k$.
In the last picture we consider the case $v_0^- < v_0^+ < k$ and show that $v_{0,F}^- < v_{0,F}^+ < k$.}
\label{f:DevinTownsend}
\end{figure}
In this case, that is, if the $i$th discontinuity is a NS, then
\begin{align*}
\delta_n^i(t)&=0,&
f\bigl(u_n(t,0_\pm)\bigr) &= F,&
v_F^- &\leq v_n(t,0_-) < v_n(t,0_+),
\\
\dot{\delta}_n^i(t) &= 0,&&&
\mathtt{w}\bigl(u_n(t,0_-)\bigr) &= \mathtt{W}\bigl(u_n(t,0_-)\bigr) \geq \mathtt{W}\bigl(u_n(t,0_+)\bigr),
\end{align*}
hence
\begin{align*}
-\Delta\mathtt{Q}_n^{k,i}(t) &=
\begin{cases}
\dfrac{F}{p^{-1}\Bigl(\mathtt{w}\bigl(u_n(t,0_-)\bigr)-k\Bigr)}
-\dfrac{F}{p^{-1}\Bigl(\mathtt{W}\bigl(u_n(t,0_+)\bigr)-k\Bigr)}
&\text{if }v_n(t,0_-) < v_n(t,0_+) < k,
\\
\dfrac{F}{p^{-1}\Bigl(\mathtt{w}\bigl(u_n(t,0_-)\bigr)-k\Bigr)} - k
&\text{if }v_n(t,0_-) < k \leq v_n(t,0_+),
\\
0
&\text{if }k \leq v_n(t,0_-) < v_n(t,0_+),
\end{cases}
\\
\mathtt{N}^k_F\bigl(u_n(t,0_-)\bigr) &= \begin{cases}
\left[k-\dfrac{F}{p^{-1}\Bigl(\mathtt{W}\bigl(u_n(t,0_-)\bigr)-k\Bigr)}\right]_+ & \hbox{if } F \neq 0,\\
k & \hbox{if } F = 0.
\end{cases}
\end{align*}
Notice that if $F=0$, then $u_n(t,0_+) = (0,V)$ and $u_n(t,0_-) \in [p^{-1}(w^-),R]\times\{0\}$.
We observe, see~\figurename s~\ref{f:DevinTownsendp} and~\ref{f:DevinTownsend}, that $-\Delta\mathtt{Q}_n^{k,i}(t) < 0$ and that $-\Delta\mathtt{Q}_n^{k,i}(t) + \mathtt{N}^k_F(u_n(t,0_-)) \geq 0$ and therefore
\[
\Bigl[ \dot{\delta}_n^i(t) \, \Delta\mathtt{E}_n^{k,i}(t) - \Delta\mathtt{Q}_n^{k,i}(t) \Bigr] \, \phi\bigl(t,\delta_n^i(t)\bigr) + \mathtt{N}^k_F\bigl(u(t,0_-)\bigr) \, \phi(t,0) = 
\Bigl[ - \Delta\mathtt{Q}_n^{k,i}(t) + \mathtt{N}^k_F\bigl(u_n(t,0_-)\bigr) \Bigr] \phi(t,0) \geq 0.
\]
Thus, by proceeding as in the proof of Proposition~\ref{p:JacobCollier} it is easy to see that \eqref{e:PorcupineTreeb} holds true.
Let us just underline that beside the NSs, the only possible stationary discontinuities at $x=0$ are PTs and CDs, however in both of these cases we have $f(u_n(t,0_-)) = 0$ and therefore $\mathtt{N}^k_F(u_n(t,0_-)) = 0$.

We can now prove that if $u$ has a non-classical discontinuity then $f(u(t,0_\pm)) = F$.
This is of course obvious if $F =0$, due to~\ref{CS4} and the fact that $f(u)\geq 0$.
We can therefore assume that $F > 0$ and that $x\mapsto u(t_0,x)$ has a (stationary) non-classical shock $(u_\ell,u_r)$, with $v_\ell < v_r$ and $f(u_\ell) = f(u_r) \doteq f \le F$.
We want to prove that $f=F$.
Consider the test function
\begin{align*}
\phi(t,x) &\doteq \left[\int_{|x|-\varepsilon}^{\infty} \varphi_\varepsilon(z) \, {\d}z \right] \left[\vphantom{\int_{|x|-\varepsilon}^{\infty}} \int_{t - t_0 + \varepsilon}^{t - t_0 +2 \varepsilon} \varphi_\varepsilon(z) \, {\d}z \right] ,
\end{align*}
where $\varphi_\varepsilon$ is a smooth approximation of the Dirac mass centred at $0_+$, $\delta^D_{0_+}$, namely
\[
\varphi_\varepsilon \in \Cc\infty(\R; \R_+),
~\varepsilon >0,
~\supp(\delta_{\varepsilon}) \subseteq[0,\varepsilon],
~\|\varphi_\varepsilon\|_{\L1(\R;\R)} = 1,
~\varphi_\varepsilon \to \delta^D_{0_+}.
\]
Observe that as $\varepsilon$ goes to zero
\begin{align*}
&\phi(t_0,x) \equiv0 \to 0 ,\\
&\phi(t,0) =\int_{t - t_0 + \varepsilon}^{t - t_0 +2 \varepsilon} \varphi_\varepsilon(z) \, {\d}z \to \delta^D_{t_{0-}}(t) ,\\
& \phi_t(t,x) = \left[\int_{|x|-\varepsilon}^{\infty} \varphi_\varepsilon(z) \, {\d}z \right] \left[\vphantom{\int_{|x|-\varepsilon}^{\infty}} \varphi_\varepsilon(t-t_0+2\varepsilon)-\varphi_\varepsilon(t-t_0+\varepsilon)\right]\to 0 ,\\
& \caratt{\R_\pm}(x) \, \phi_x(t,x) \to \mp \, \delta^D_{0_\pm}(x) \, \delta^D_{t_{0-}}(t).
\end{align*}
Then by \eqref{e:Deftones2} for all $k$ belonging to the interval $(\hat{v}(w_\ell,F), \check{v}(v_r,F))$ we have
\begin{align*}
    &\mathtt{Q}^k(u_\ell) - \mathtt{Q}^k(u_r)
    +f \left[\frac{k}{F} - \frac{1}{p^{-1}\bigl(\mathtt{W}(u_\ell)-k\bigr)} \right]_+
    \\
    =&
    \left[ \frac{f}{p^{-1}\bigl(\mathtt{W}(u_\ell)-k\bigr)} - k \right]
    +f \left[\frac{k}{F} - \frac{1}{p^{-1}\bigl(\mathtt{W}(u_\ell)-k\bigr)} \right]
    =
    \left[\frac{f}{F} - 1\right]k\ge 0.
\end{align*}
Since $f\le F$, the above estimate implies that $f = F$ and this concludes the proof.
\end{proof}

We underline that the entropy condition \eqref{e:Deftones} ``becomes'' \eqref{e:Deftones2} if we do not require that the test function $\phi$ satisfy the condition $\phi(\cdot,0)\equiv0$.
Even if it is not necessary for the proof of Theorem~\ref{t:mainF}, we conclude this section by considering in \eqref{e:AdrianBelew1} a test function $\phi$ which may not satisfy the condition $\phi(\cdot,0)\equiv0$.

\begin{proposition}
Let $u^o \in \L1\cap\BV(\R;\Omega)$, $F\in [0,f_{\rm c}^+]$ satisfy \ref{H.1} or \ref{H.2} and $u$ be a limit of the approximate solutions $(u_n)_n$ constructed in Section~\ref{s:approxsolR}.
If the traces at $x=0$ of $(u_n)_n$ and $u$ satisfy for any $\phi \in \Cc\infty((0,\infty)\times\R; \R)$
\begin{align}
&\
\lim_{n\to\infty} \int_0^T
f\bigl(u_n(t,0_-)\bigr)
\Bigl[\mathtt{W}\bigl(u_n(t,0_-)\bigr) - \mathtt{W}\bigl(u_n(t,0_+)\bigr)\Bigr]_+ \,
\phi(t,0) \, \d t
\nonumber\\=&\
\int_0^T
f\bigl(u(t,0_-)\bigr) \,
\Bigl[\mathtt{W}\bigl(u(t,0_-)\bigr) - \mathtt{W}\bigl(u(t,0_+)\bigr)\Bigr]_+ \,
\phi(t,0) \, \d t
\label{e:Katatonia}
\end{align}
then $u$ satisfies the following integral condition for any $\phi \in \Cc\infty((0,\infty)\times\R; \R)$
\[
\int_0^\infty \left[
\int_{\R} 
\bigl[ \rho \, \phi_t
+ f(u)\, \phi_x \bigr] \,
\mathtt{W}(u) \,
\d x 
-
f\bigl(u(t,0_-)\bigr) \,
\Bigl[\mathtt{W}\bigl(u(t,0_-)\bigr) - \mathtt{W}\bigl(u(t,0_+)\bigr)\Bigr]_+ \,
\phi(t,0)
\right] \d t
=0.
\]
\end{proposition}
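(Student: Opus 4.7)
The plan is to argue exactly as in the proof of~\ref{CS2} inside Proposition~\ref{p:JacobCollier}, with an additional boundary contribution coming from the fact that the flux of the extended linearized momentum $\rho\,\mathtt{W}(u)$ is now allowed to concentrate on the constraint line $x=0$, since the test function $\phi$ is no longer required to vanish there. I would first fix $T>0$ so that $\phi(t,x)=0$ for $t\ge T$, and exploit the $\Lloc1$-convergence $u_n\to u$, the uniform bound $\|u_n\|_{\L\infty(\R;\Omega)}\le R+V$, the continuity of $f$ and $\mathtt{W}$, and hypothesis~\eqref{e:Katatonia}, in order to reduce the claim to the discrete identity
\[
\int_0^T\!\!\int_\R[\rho_n\,\phi_t+f(u_n)\,\phi_x]\,\mathtt{W}(u_n)\,\d x\,\d t = \int_0^T f\bigl(u_n(t,0_-)\bigr)\bigl[\mathtt{W}\bigl(u_n(t,0_-)\bigr)-\mathtt{W}\bigl(u_n(t,0_+)\bigr)\bigr]_+\phi(t,0)\,\d t
\]
for every $n$, followed by a termwise passage to the limit.

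To establish the discrete identity, I would apply the Green-Gauss formula. Since $u_n$ is piecewise constant, this rewrites the left-hand side as
\[
\int_0^T\sum_{i\in\mathsf{D}(t)}\bigl[\dot\delta_n^i(t)\,\Delta G_n^i(t)-\Delta H_n^i(t)\bigr]\,\phi\bigl(t,\delta_n^i(t)\bigr)\,\d t,
\]
with $G_n\doteq\rho_n\,\mathtt{W}(u_n)$, $H_n\doteq f(u_n)\,\mathtt{W}(u_n)$, and $\Delta$ denoting the jump across the $i$th discontinuity. By construction each discontinuity of $u_n(t,\cdot)$ sitting away from $x=0$ is classical (S, RS, CD, or PT), and therefore satisfies the second Rankine-Hugoniot relation~\eqref{e:RH2}, which makes its contribution to the sum vanish.

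The only residual term is produced by the (possibly absent) stationary non-classical shock located at $x=0$. For such a NS one has $\dot\delta_n^i(t)=0$ and $f(u_n(t,0_\pm))=F$; moreover, the exhaustive case-by-case analysis carried out in the proof of Proposition~\ref{p:interest} shows that the selection criterion~\eqref{e:hat-check} always yields $\mathtt{W}(u_n(t,0_-))\geq\mathtt{W}(u_n(t,0_+))$. Hence the NS contribution reduces to
\[
-\Delta H_n^i(t)\,\phi(t,0) = F\bigl[\mathtt{W}\bigl(u_n(t,0_-)\bigr)-\mathtt{W}\bigl(u_n(t,0_+)\bigr)\bigr]\,\phi(t,0) = f\bigl(u_n(t,0_-)\bigr)\bigl[\mathtt{W}\bigl(u_n(t,0_-)\bigr)-\mathtt{W}\bigl(u_n(t,0_+)\bigr)\bigr]_+\phi(t,0),
\]
which is precisely the right-hand side at the discrete level.

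Passing $n\to\infty$, the left-hand side converges by dominated convergence from the $\L\infty$-bound on $u_n$ and the a.e.\ convergence $u_n\to u$, while the right-hand side converges by assumption~\eqref{e:Katatonia}. The main obstacle is precisely the justification that the signed jump $\mathtt{W}(u_n(t,0_-))-\mathtt{W}(u_n(t,0_+))$ coincides with its positive part at every NS; this monotonicity is \emph{not} a direct consequence of the definition~\eqref{e:hat-check} of $\hat{\mathtt u}$ and $\check{\mathtt u}$ alone, but rather of a case-by-case inspection of which pairs $(\hat{\mathtt u}_\ell,\check{\mathtt u}_r)$ can actually arise as outputs of $\mathcal{R}_{F,n}$ on $\mathcal{D}_2$, as tabulated in the interaction analysis of Proposition~\ref{p:interest} and visible in \figurename~\ref{f:uhatucheck0}.
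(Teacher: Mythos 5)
Your proposal is correct and follows essentially the same route as the paper: Green--Gauss on the piecewise constant $u_n$, cancellation of all classical jumps via the second Rankine--Hugoniot condition \eqref{e:RH2}, explicit computation of the stationary non-classical shock contribution using $f(u_n(t,0_\pm))=F$ and $\mathtt{W}(u_n(t,0_-))\geq\mathtt{W}(u_n(t,0_+))$, and then passage to the limit via the $\L\infty$ bound, a.e.\ convergence and hypothesis \eqref{e:Katatonia}. Your only inaccuracy is the closing remark: the inequality $\mathtt{W}(u_n(t,0_-))\geq\mathtt{W}(u_n(t,0_+))$ at a NS does follow directly from $\hat{\mathtt w}(w,F)\geq w$ (Remark~\ref{r:hatcheck}) together with the property $\mathtt{w}(u_\ell)>\mathtt{w}(\check{\mathtt u}_r)$ on $\mathcal{D}_2$ and the monotonicity of $w\mapsto\max\{w^-,w\}$, so no case-by-case inspection is needed.
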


\begin{proof}
Choose $T>0$ such that $\phi(t,x) = 0$ whenever $t \geq T$.
By \eqref{e:Katatonia}, since $u_n$ is uniformly bounded and $f$ is uniformly continuous on bounded sets, it is sufficient to prove that
\begin{equation}\label{e:BlindMelonb}
\int_0^T \left[
\int_{\R} 
\left[ \rho_n \, \phi_t
+ f(u_n)\, \phi_x \right] \,
\mathtt{W}(u_n) \,
\d x 
-
f\bigl(u_n(t,0_-)\bigr) \,
\bigl[\mathtt{W}\bigl(u_n(t,0_-)\bigr) - \mathtt{W}\bigl(u_n(t,0_+)\bigr)\bigr]_+ \,
\phi(t,0)
\right] \d t
\to0.
\end{equation}
By the Green-Gauss formula the double integrals above can be written as
\[
\int_0^T \sum_{i\in\mathsf{D}(t)}
\bigl[ \dot{\delta}_n^i(t) \, \Delta Y_n^i(t) - \Delta Q_n^i(t) \bigr] \, \phi\bigl(t,\delta_n^i(t)\bigr) \, \d t ,
\]
where
\begin{align*}
\Delta Y_n^i(t) &\doteq 
\rho_n\bigl(t,\delta_n^i(t)_+\bigr) \,
\mathtt{W}\Bigl(u_n\bigl(t,\delta_n^i(t)_+\bigr)\Bigr)
- 
\rho_n\bigl(t,\delta_n^i(t)_-\bigr) \,
\mathtt{W}\Bigl(u_n\bigl(t,\delta_n^i(t)_-\bigr)\Bigr),
\\
\Delta Q_n^i(t) &\doteq 
f\Bigl(u_n\bigl(t,\delta_n^i(t)_+\bigr)\Bigr) \,
\mathtt{W}\Bigl(u_n\bigl(t,\delta_n^i(t)_+\bigr)\Bigr)
- 
f\Bigl(u_n\bigl(t,\delta_n^i(t)_-\bigr)\Bigr) \,
\mathtt{W}\Bigl(u_n\bigl(t,\delta_n^i(t)_-\bigr)\Bigr).
\end{align*}
If $u_n(t,\cdot)$ does not have a non-classical shock at $\delta_n^i(t)$, then by the Rankine-Hugoniot conditions
\[
\dot{\delta}_n^i(t) \, \Delta Y_n^i(t) - \Delta Q_n^i(t)=0;
\]
moreover, if $\delta_n^i(t) = 0$ and $u_n(t,\cdot)$ has a stationary discontinuity at $x=0$, namely a phase transition or a contact discontinuity, then $v_n(t,0_+) = v_n(t,0) = 0$ and therefore $\sign(v_n(t,0_+)) = 0$.
\\
On the other hand, if $\delta_n^i(t) = 0$ and $u_n(t,\cdot)$ has a stationary non-classical shock at $x = 0$, then
\begin{align*}
\dot{\delta}_n^i(t) &=0,&
f\bigl(u_n(t,0_\pm)\bigr) &= F,&
\mathtt{W}\bigl(u_n(t,0_-)\bigr) &\geq \mathtt{W}\bigl(u_n(t,0_+)\bigr),
\end{align*}
and therefore
\begin{align*}
\dot{\delta}_n^i(t) \, \Delta Y_n^i(t) - \Delta Q_n^i(t) 
&= 
-F \,
\Bigl[ \mathtt{W}\bigl(u_n(t,0_+)\bigr) - \mathtt{W}\bigl(u_n(t,0_-)\bigr) \Bigr]
\\&=
f\bigl(u_n(t,0_-)\bigr) \,
\Bigl[\mathtt{W}\bigl(u_n(t,0_-)\bigr) - \mathtt{W}\bigl(u_n(t,0_+)\bigr)\Bigr]_+.
\end{align*}
As a consequence \eqref{e:BlindMelonb} is trivial.
\end{proof}

\section*{Acknowledgements}

M.~D.~Rosini acknowledges the support of Universit\`a degli Studi di Ferrara Project 2017 \lq\lq FIR: Modelli macroscopici per il traffico veicolare o pedonale\rq\rq. 
N.~Dymski acknowledges the support of the French Government Scholarship (BGF) program for joint PhD thesis of the French Embassy in Poland.

{\footnotesize\bibliographystyle{acm}
\bibliography{biblio}}

\hfill

\Addresses

\end{document}